\documentclass[12pt]{amsart} 
\usepackage{amssymb,mathrsfs, longtable}
\usepackage[matrix,arrow]{xy}
\usepackage{url}

\sloppy \pagestyle{plain} \textwidth=13.5cm \textheight=21cm

\makeatletter \@addtoreset{equation}{section} \makeatother

\newcommand{\down}[1]{\left\lfloor #1\right\rfloor}
\newcommand{\muu}{{\boldsymbol{\mu}}}

\newcommand{\PP}{{\mathbb P}}
\newcommand{\ZZ}{{\mathbb Z}}
\newcommand{\RR}{{\mathbb R}}
\newcommand{\FF}{{\mathbb F}}
\newcommand{\CC}{{\mathbb C}}
\newcommand{\NN}{{\mathbb Z}_{>0}}
\newcommand{\QQ}{{\mathbb Q}}
\newcommand{\OOO}{{\mathscr{O}}} 
\newcommand{\EEE}{{\mathscr{E}}} 
\newcommand{\III}{{\mathscr{I}}} 
\newcommand{\HHH}{{\mathscr{H}}} 
\newcommand{\MMM}{{\mathscr{M}}} 
\newcommand{\LLL}{{\mathscr{L}}} 
 
\newcommand{\TTT}{{\mathscr{T}}} 

\newcommand{\Cr}{\operatorname{Cr}}
\newcommand{\B}{{\mathbf B}}
\newcommand{\A}{{\mathfrak{A}}}
\newcommand{\Sy}{{\mathfrak{S}}}
\newcommand{\Or}{{\operatorname{O}}}
\newcommand{\PSL}{{\operatorname{PSL}}}
\newcommand{\SU}{{\operatorname{SU}}}
\newcommand{\PGL}{{\operatorname{PGL}}}
\newcommand{\PSp}{{\operatorname{PSp}}}
\newcommand{\M}{{\operatorname{M}}}

\newcommand{\GL}{{\operatorname{GL}}}
\newcommand{\SL}{{\operatorname{SL}}}
\newcommand{\Sp}{{\operatorname{Sp}}}
\newcommand{\SO}{{\operatorname{SO}}}

\newcommand{\mt}[1]{\operatorname{#1}}
\newcommand{\mult}{\operatorname{mult}}
\newcommand{\ord}{\operatorname{ord}}

\newcommand{\Ho}{H}
\newcommand{\h}{h}

\newcommand{\Bs}{\operatorname{Bs}}
\newcommand{\Cl}{\operatorname{Cl}}
\newcommand{\Sing}{\operatorname{Sing}}
\newcommand{\Pic}{\operatorname{Pic}}
\newcommand{\Aut}{\operatorname{Aut}}
\newcommand{\Bir}{\operatorname{Bir}}
\newcommand{\rk}{\operatorname{rk}}
\newcommand{\Gr}{\operatorname{Gr}}
\newcommand{\LGr}{\operatorname{LGr}}
\newcommand{\comment}[1]{}

 \newcommand{\Supp}{\operatorname{Supp}}
\newcommand{\Eu}{\operatorname{Eu}}

 \renewcommand{\emptyset}{\varnothing}
\newcommand{\xref}[1]{{\rm \ref{#1}}}

\newtheorem{theorem}[equation]{Theorem}

\newtheorem{lemma}[equation]{Lemma}
\newtheorem{corollary}[equation]{Corollary}

\newtheorem{claim}{Claim}[equation]

\theoremstyle{definition}
\newtheorem{definition}[equation]{Definition}
\newtheorem{remark}[equation]{Remark}
\newtheorem{example}[equation]{Example}
\newtheorem{case}[equation]{}

\newtheorem{mainassumption}[equation]{Assumption}

\newenvironment{subequation}{\begin{equation*}}{\refstepcounter{claim}\leqno (\theclaim)\end{equation*}}

\author{Yuri Prokhorov}
\thanks{
The author was partially supported by 
RFBR (grant No. 08-01-00395-a), 
Leading Scientific Schools (grants No. 1983.2008.1, 1987.2008.1), 
and 
AG Laboratory HSE (RF government grant ag. 11.G34.31.0023)}
\address{
Department of Higher Algebra, Faculty of Mathematics and Mechanics, 
Moscow State Lomonosov University, Vorobievy Gory, Moscow, 119 991, RUSSIA
\newline\indent
Laboratory of Algebraic Geometry, SU-HSE, 
7 Vavilova Str., Moscow, 117312, RUSSIA
}
\email{prokhoro@gmail.com} 
\title{Simple finite subgroups of the Cremona group of rank 3}
\date{}
\subjclass{14E07}

\begin{document}
\maketitle
\begin{abstract}
We classify all finite simple subgroups of the Cremona group 
$\Cr_3(\CC)$. 
\end{abstract}

\section{Introduction}
Let $\Bbbk$ be a field.
The Cremona group $\Cr_d(\Bbbk)$ is the group 
of birational automorphisms 
of the projective space $\PP^d_{\Bbbk}$, or,
equivalently, the group of $\Bbbk$-automorphisms of 
the rational function field $\Bbbk(t_1,\dots,t_d)$.
It is well-known that $\Cr_1(\Bbbk)=\PGL_2(\Bbbk)$.
For $d\ge 2$, the structure of $\Cr_d(\Bbbk)$ and its subgroups 
is very complicated.
For example, the classification of finite subgroups in 
$\Cr_2(\CC)$ is an old classical problem.
Recently this classification almost has been 
completed by Dolgachev and Iskovskikh \cite{Dolgachev-Iskovskikh}.
The following is a consequence of the list in \cite{Dolgachev-Iskovskikh}.

\begin{theorem}[\cite{Dolgachev-Iskovskikh}]
\label{theorem-main-DI}
Let $G\subset \Cr_2(\CC)$ be a non-abelian simple finite subgroup.
Then $G$ is isomorphic to one of the following groups:
\begin{equation}
\label{eq-main-cr2}
\A_5,\quad \A_6, \quad\PSL_2(7).
\end{equation}
\end{theorem}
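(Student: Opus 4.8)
The plan is to regularize the $G$-action and then run the equivariant minimal model program for rational surfaces, in the form developed by Manin and Iskovskikh, reading off the simple groups from the outcome. First I would, using equivariant resolution of indeterminacy applied to the $G$-action on $\PP^2$, replace $G$ by a conjugate subgroup acting faithfully and biregularly on a smooth projective rational surface $S$. Running the $G$-equivariant MMP on $S$ produces a $G$-Mori fibre space, so one may assume that $S$ is either (a) a del Pezzo surface with $\rk\Pic(S)^G=1$, or (b) a conic bundle $\pi\colon S\to\PP^1$ with $\rk\Pic(S)^G=2$. It then remains to determine, for each such $S$, the non-abelian finite simple subgroups of $\Aut(S)$, and conversely to realize each of $\A_5$, $\A_6$, $\PSL_2(7)$ inside $\Cr_2(\CC)$.

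In case (b) the action of $G$ on the base gives a homomorphism $G\to\Aut(\PP^1)=\PGL_2(\CC)$ whose kernel, being normal in the simple group $G$, is trivial or all of $G$. If it is trivial, then $G\hookrightarrow\PGL_2(\CC)$, whose unique non-abelian finite simple subgroup is $\A_5$. If $G$ acts trivially on the base, then it acts faithfully on the generic fibre, which by Tsen's theorem is a projective line over $\CC(t)$; since the characteristic is zero, every finite subgroup of $\PGL_2(\CC(t))$ is cyclic, dihedral, $\A_4$, $\Sy_4$ or $\A_5$, so again $G\cong\A_5$. Hence conic bundles contribute only $\A_5$.

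In case (a) one inspects del Pezzo surfaces $S$ according to the degree $d=K_S^2\in\{1,\dots,9\}$. For $d=9$, $S=\PP^2$ and $\Aut(S)=\PGL_3(\CC)$, whose non-abelian finite simple subgroups are precisely $\A_5$, $\A_6$ (the Valentiner group, the image of $3.\A_6\subset\SL_3(\CC)$) and $\PSL_2(7)$ (the automorphism group of the Klein quartic). For $d=8$ the only $G$-minimal case is $S=\PP^1\times\PP^1$, and simplicity of $G$ together with $\Aut(S)=(\PGL_2(\CC)\times\PGL_2(\CC))\rtimes\ZZ/2$ forces $G\hookrightarrow\PGL_2(\CC)$, i.e.\ $G\cong\A_5$; the cases $d=7,6$ have solvable automorphism groups. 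For $d\le5$ the group $\Aut(S)$ is finite and acts faithfully on $\Pic(S)$ preserving $K_S$ and the intersection form, hence, modulo the small kernel generated by the Geiser/Bertini involution and an abelian part, embeds into the Weyl group $W(E_{9-d})$; a direct analysis gives $\Aut(S)=\Sy_5$ for $d=5$ (simple subgroup $\A_5$), no non-abelian simple subgroup other than $\A_5$ for $d=4,3,1$, and $\PSL_2(7)$ for $d=2$ (the double cover of $\PP^2$ branched over the Klein quartic, with $\Aut(S)=\ZZ/2\times\PSL_2(7)$).

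It remains to realize the three groups: $\A_5$ and $\PSL_2(7)$ sit inside $\PGL_3(\CC)=\Aut(\PP^2)\subset\Cr_2(\CC)$, and $\A_6$ sits inside $\PGL_3(\CC)$ via the Valentiner representation, so each of them occurs and no other group does; this shows the list is exactly \eqref{eq-main-cr2}. I expect the main obstacle to be the del Pezzo analysis in degrees $\le4$: one must simultaneously control the image of $\Aut(S)$ in $W(E_{9-d})$ and the kernel of that representation, and exclude every non-abelian finite simple group except $\A_5$ (and $\PSL_2(7)$ in degree $2$) — this is precisely where the detailed classification of \cite{Dolgachev-Iskovskikh} does its work.
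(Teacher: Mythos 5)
Your outline is correct and is essentially the argument of the cited source \cite{Dolgachev-Iskovskikh}: the paper itself does not prove Theorem \ref{theorem-main-DI} but quotes it as a consequence of that classification, which proceeds exactly by equivariant regularization, the two-output $G$-MMP (del Pezzo surface with $\rk\Pic(S)^G=1$ versus conic bundle), and a case-by-case inspection of automorphism groups, with the realizations of $\A_5$, $\A_6$, $\PSL_2(7)$ inside $\PGL_3(\CC)$. This is also the two-dimensional template for the strategy the present paper carries out for $\Cr_3(\CC)$, so no further comparison is needed.
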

\par\noindent
However, the methods and results of \cite{Dolgachev-Iskovskikh} show that 
one cannot expect a reasonable classification of all finite 
subgroups of Cremona groups of higher rank.
In this paper we restrict ourselves with the case 
of simple finite subgroups of $\Cr_3(\CC)$.
Our main result is the following:

\begin{theorem}
\label{theorem-main-1}
Let $G\subset \Cr_3(\CC)$ be a non-abelian simple finite subgroup.
Then $G$ is isomorphic to one of the following groups:
\begin{equation}
\label{eq-main}
\A_5,\quad \A_6,\quad \A_7, \quad\PSL_2(7),\quad \SL_2(8),\quad \PSp_4(3).
\end{equation}
All the possibilities occur.
\end{theorem}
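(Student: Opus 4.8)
The plan is to reduce, via the equivariant Minimal Model Program, to the case of a $G$-Mori fiber space, to dispose of the fibered cases using Theorem~\ref{theorem-main-DI}, and then to spend the bulk of the work on $G$-$\QQ$-Fano threefolds. Concretely, I would first realize $G$ as a group of biregular automorphisms of a smooth projective rational threefold (regularization of the birational action, followed by $G$-equivariant resolution of singularities). Running the $G$-equivariant MMP then produces a $G$-Mori fiber space $\pi\colon X\to S$ with $X$ $\QQ$-factorial and terminal, $-K_X$ $\pi$-ample, and $\rho^G(X/S)=1$. Since $X$ is rational, so is $S$, and $\dim S\in\{0,1,2\}$; this gives three cases: a $G$-conic bundle over a rational surface, a $G$-del Pezzo fibration over $S=\PP^1$, or a $G$-$\QQ$-Fano threefold.

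To handle the two fibered cases I would use the homomorphism $G\to\Aut(S)$: since $G$ is simple, its image is either trivial or all of $G$. If $G$ acts faithfully on $S$, then $G\hookrightarrow\Aut(S)\subset\Cr_{\dim S}(\CC)$, so for $\dim S=1$ this forces $G\cong\A_5$ (the only non-abelian finite simple subgroup of $\PGL_2(\CC)$), and for $\dim S=2$ Theorem~\ref{theorem-main-DI} gives $G\in\{\A_5,\A_6,\PSL_2(7)\}$. If $G$ acts trivially on $S$, then $G$ acts faithfully on the generic fiber $X_\eta$ over $K=\CC(S)$, a smooth conic (if $\dim S=2$) or a smooth del Pezzo surface (if $\dim S=1$) over $K$; after base change to $\bar K$, which is algebraically closed of characteristic zero, one gets a faithful action of $G$ on $\PP^1_{\bar K}$, respectively on a rational surface over $\bar K$, to which the same analysis applies. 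Hence the fibered cases contribute only $\A_5,\A_6,\PSL_2(7)$.

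The remaining and hardest case is that of a $G$-$\QQ$-Fano threefold $X$ (with $X$ rational), where the plan is to combine the geometry of $\QQ$-Fano threefolds with the representation theory of the candidate groups. I would proceed as follows: (i) for a suitable $m$, $-mK_X$ is Cartier and very ample, so $X\hookrightarrow\PP(H^0(X,-mK_X)^\vee)$ is $G$-equivariant and $G$ acquires a projective representation, i.e. a linear representation of a central extension $\widetilde G$ of $G$ — very restrictive, since our candidates have small Schur multipliers with well-understood low-dimensional irreducibles; (ii) the dimension of that representation is controlled by $(-K_X)^3$ and the singularity basket through orbifold Riemann--Roch, together with boundedness and explicit bounds for terminal $\QQ$-Fano threefolds (bounds on $(-K_X)^3$ and on the singular points in the style of Kawamata), leaving only finitely many numerical types; (iii) in the cases of large Fano index one invokes the classification of such threefolds ($\PP^3$, quadrics, del Pezzo threefolds, explicit weighted complete intersections); (iv) fixed-point and minimal-orbit arguments: a smooth $G$-fixed point gives a faithful action on a three-dimensional tangent space, forcing $G\in\{\A_5,\PSL_2(7)\}$, a fixed singular point is analyzed via the classification of terminal threefold singularities, and small orbits bound the index of subgroups of $G$. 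Working through the possibilities, $G$ must lie on the list \eqref{eq-main}; the remaining simple groups that act on some $\QQ$-Fano threefold, such as $\PSL_2(11)$ on the Klein cubic threefold, are eliminated because the relevant threefolds are not rational (Clemens--Griffiths and related non-rationality results). The hardest part will be exactly this case: it intertwines delicate threefold birational geometry with character theory, and requires rationality and non-rationality inputs for the borderline groups.

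Finally, to see that all six groups occur, I would exhibit explicit actions: $\A_5$, $\A_6$ and $\A_7$ act linearly on $\PP^3$, via the four-dimensional representation of $\A_5$ and the four-dimensional faithful representations of $2.\A_6\cong\SL_2(9)$ and of $2.\A_7$; $\PSL_2(7)$ acts on $\PP^2$ by the Klein action, hence on $\Cr_2(\CC)\subset\Cr_3(\CC)$; $\PSp_4(3)$ is realized on the Burkhardt quartic threefold, which is rational; and $\SL_2(8)$ acts on a suitable rational $\QQ$-Fano threefold. This gives all the groups in \eqref{eq-main} and completes the classification.
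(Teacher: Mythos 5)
Your overall strategy --- regularize the action, run the $G$-equivariant MMP, dispose of the conic-bundle and del Pezzo fibration cases via Theorem \ref{theorem-main-DI}, and reduce to $G\QQ$-Fano threefolds --- is exactly the route the paper takes, and your treatment of the fibered cases and of the existence statements is fine. The gap is in the $G\QQ$-Fano case, specifically in your steps (i)--(ii). You propose to embed $X$ by $|-mK_X|$ for a suitable $m$ and argue that the resulting projective representation of (a central extension of) $G$ is ``very restrictive'' because the candidate groups have few low-dimensional irreducibles. But $\dim H^0(X,-mK_X)$ grows like $m^3(-K_X)^3/6$, and the a priori bounds on the Gorenstein index and on $(-K_X)^3$ coming from boundedness of terminal $\QQ$-Fano threefolds are far too coarse for the representation theory of $\A_8$, $\M_{11}$, $\SU_3(3)$, etc.\ to rule anything out: every finite group embeds in $\GL_N(\CC)$ once $N$ is large. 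As written, this step stalls precisely where the real work begins.

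What the paper supplies, and what your plan is missing, is a mechanism forcing the relevant linear system to be genuinely small. In the non-Gorenstein case, the inequality $\sum_P(r_P-1/r_P)<24$ (orbifold Riemann--Roch plus the Kawamata/KMMT bound), combined with the observation that a simple group outside the $\Cr_2$ list admits no orbit of length $\le 8$ whose stabilizer acts faithfully on a $\le 4$-dimensional tangent space, forces all non-Gorenstein points to be index-two cyclic quotients in a single orbit and pins the situation down to $G\simeq\PSL_2(11)$ with $11$ such points, which is then killed by an explicit intersection computation on a crepant blowup --- not by a non-rationality argument. In the Gorenstein case one works with $|-K_X|$ itself, where $\dim H^0(X,-K_X)=g+2\le 14$; the two workhorses there are the lemma that $X$ carries no irreducible $G$-invariant anticanonical member (proved via Mukai's classification of symplectic K3 actions, Shokurov connectedness, and an Albanese argument), which forces irreducibility of the representation on $H^0(X,-K_X)$, and Namikawa's smoothing theorem, which bounds $|\Sing X|$ by $20-\rho+h^{1,2}$ of a smoothing and feeds the classification of transitive simple actions on $\le 26$ symbols. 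Without these inputs (or substitutes for them) your steps (ii)--(iv) cannot be carried through; with them, your outline becomes the paper's proof.
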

 
\par\noindent
In particular, we give the affirmative answer to a question of J.-P. Serre
\cite[Question 6.0]{Serre2009}: there are a lot of finite groups which 
do not admit any embeddings into $\Cr_3(\CC)$.
More generally we classify simple finite subgroups in 
the group of birational automorphisms 
of an arbitrary three-dimensional rationally connected variety and in many cases
we determine all birational models of the action:

\begin{theorem}
\label{theorem-main-2}
Let $X$ be a rationally connected threefold and let $G\subset \Bir (X)$ 
be a non-abelian simple finite subgroup.
Then $G$ is isomorphic either to $\PSL_2(11)$ or 
to one of the groups in the list \eqref{eq-main}.
All the possibilities occur. Furthermore, if $G$ does not admit any embeddings 
into $\Cr_2(\CC)$ \textup(see Theorem \xref{theorem-main-DI}\textup), then 
$G$ is conjugate to one of the following:
\begin{enumerate}
\item
$\A_7$ acting on some special smooth intersection of a quadric and a cubic
$X_6'\subset \PP^5$ \textup(see Example \xref{example-V6}\textup),

\item
$\A_7$ acting on $\PP^3$
\textup(see Theorem \xref{theorem-Blichfeldt}\textup),

\item
$\PSp_4(3)$ acting on $\PP^3$
\textup(see Theorem \xref{theorem-Blichfeldt}\textup),
\item
$\PSp_4(3)$ acting on the Burkhardt quartic
$X_4^{\mathrm b}\subset \PP^4$ 
\textup(see Example \xref{Burkhardt-quartic}\textup),
\item
$\SL_2(8)$ acting on some smooth Fano threefold $X_{12}^{\mathrm m}\subset \PP^8$
of genus $7$ 
\textup(see Example \xref{example-genus=7}\textup),

\item
$\PSL_2(11)$ acting on the Klein cubic $X^{\mathrm k}_3\subset \PP^4$
\textup(see Example \xref{example-Klein-cubic}\textup),
\item
$\PSL_2(11)$ acting on some smooth Fano threefold $X_{14}^{\mathrm a}\subset \PP^9$ 
of genus $8$
\textup(see Example \xref{example-V14}\textup).
\end{enumerate} 
Moreover, any equivariant action of $G$ on a Fano-Mori fiber space is isomorphic to one of the above cases. 
\end{theorem}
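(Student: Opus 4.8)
The plan is to pass to a smooth projective model on which $G$ acts biregularly, run the $G$-equivariant minimal model program, and then treat the resulting $G$-Mori fiber space $\phi\colon X'\to S$ according to the dimension of the base. Here $X'$ has $\QQ$-factorial terminal singularities and $\rho^G(X'/S)=1$, and since $X$ is rationally connected so are $X'$ and $S$; hence $\dim S\le 2$, with $S$ a rational surface when $\dim S=2$ and $S\cong\PP^1$ when $\dim S=1$.

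First I would dispose of the positive-dimensional bases. If $\dim S=2$, then $\phi$ is a $G$-conic bundle over a rational surface $S$, and the kernel of $G\to\Aut(S)$, being normal, is trivial or all of $G$. In the first case $G$ embeds into $\Bir(S)\cong\Cr_2(\CC)$, so Theorem~\ref{theorem-main-DI} gives $G\in\{\A_5,\A_6,\PSL_2(7)\}$; in the second $G$ acts faithfully on the generic fiber, a conic over $\CC(S)$, and a finite nonabelian simple group acting on a curve of genus zero over a field of characteristic zero is isomorphic to $\A_5$. If $\dim S=1$, then $S\cong\PP^1$ and $\phi$ is a $G$-del Pezzo fibration; again the kernel of $G\to\PGL_2(\CC)$ is trivial (so $G=\A_5$) or all of $G$, and in the latter case $G$ acts faithfully on the del Pezzo surface over $\overline{\CC(t)}$ obtained from the generic fiber by base change, so the classification of automorphism groups of del Pezzo surfaces over an algebraically closed field of characteristic zero forces $G\in\{\A_5,\A_6,\PSL_2(7)\}$. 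In every such case $G$ is one of the three groups of Theorem~\ref{theorem-main-DI}, it already embeds into $\Cr_2(\CC)\subset\Cr_3(\CC)$, and assertions (i)--(vii) are vacuous for it.

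The essential case is $\dim S=0$: then $X':=X$ is a $\QQ$-factorial terminal Fano threefold with a faithful $G$-action and $\rho^G(X)=1$ (a $GQ$-Fano threefold). My plan has two stages. First, bound $G$ up to isomorphism: by boundedness of terminal Fano threefolds the number $(-K_X)^3$ and $\dim|{-}K_X|$ lie in a finite set independent of $X$, so the $G$-modules $H^0(X,-mK_X)$ for small $m$, together with the $G$-action on $\Sing X$, give a faithful projective representation of $G$ of bounded dimension, and comparison with the known lower bounds for the degree of a faithful projective representation of a finite simple group leaves only a short list of candidates. Second, for each candidate analyse the geometry: study the $G$-action on the anticanonical model, on $|{-}K_X|$ and on the terminal (quotient) singularities by means of $G$-equivariant orbifold Riemann--Roch and Mori's classification of three-dimensional terminal singularities, either deriving a contradiction --- this rules out the remaining candidates such as $\A_8$, $M_{11}$ and $\PSL_2(13)$ --- or pinning down $X$; for $X=\PP^3$ one invokes Blichfeldt's classification of finite subgroups of $\PGL_4(\CC)$ (Theorem~\ref{theorem-Blichfeldt}). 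The groups that survive and do not embed into $\Cr_2(\CC)$ are precisely $\A_7$ (on a smooth $(2,3)$-complete intersection $X_6'\subset\PP^5$ and on $\PP^3$), $\PSp_4(3)$ (on $\PP^3$ and on the Burkhardt quartic $X_4^{\mathrm b}\subset\PP^4$), $\SL_2(8)$ (on a genus-$7$ Fano threefold $X_{12}^{\mathrm m}\subset\PP^8$) and $\PSL_2(11)$ (on the Klein cubic $X_3^{\mathrm k}\subset\PP^4$ and on a genus-$8$ Fano threefold $X_{14}^{\mathrm a}\subset\PP^9$); together with the three groups of Theorem~\ref{theorem-main-DI} this yields the list consisting of $\PSL_2(11)$ and the groups of \eqref{eq-main}. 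That each possibility occurs is checked by exhibiting the actions in the examples cited and verifying faithfulness and rational connectedness; that these exhaust the $G$-birational models for the groups outside $\Cr_2(\CC)$ follows because, as shown above, the conic-bundle and del Pezzo fibration cases cannot produce such groups. Finally one records the $G$-birational relations among the seven models --- for instance the two $\PSL_2(11)$-models are interchanged by the classical birational transformation between Fano threefolds of genus $8$ and cubic threefolds --- and one observes that $\PSL_2(11)$, although it occurs in $\Bir(X)$ for suitable rationally connected $X$, does not embed into $\Cr_3(\CC)$ since both of its models are irrational.

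The main obstacle is the second stage above. Terminal non-Gorenstein Fano threefolds are not classified, so the elimination and the identification of $X$ cannot be read off a list but must proceed through numerical and representation-theoretic invariants: bounding $(-K_X)^3$ and the genus in the presence of a large symmetry group, controlling the contribution of the terminal singularities to orbifold Riemann--Roch, and ruling out all but one geometric configuration at each step, separately for each candidate group. Secondary difficulties are verifying that $\rho^G(X)=1$ and that the explicit models are rationally connected (and, for the $\PSL_2(11)$-models, irrational), and settling the $G$-equivariant birational relations among the listed models.
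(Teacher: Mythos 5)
Your reduction to the case of a $G\QQ$-Fano threefold (equivariant MMP, then killing the conic-bundle and del Pezzo fibration cases because the base or a general fiber is rational and $G$ is simple) is exactly the paper's first step, and that part is fine. The gap is in your treatment of the Fano case itself. Your ``stage 1'' --- bound $\dim|-mK_X|$ by boundedness of terminal Fano threefolds and compare with minimal degrees of faithful projective representations of simple groups --- does not produce a short list of candidates. The effective bounds coming from boundedness (already $g\le 37$ for Gorenstein canonical Fanos, and much worse constants in the non-Gorenstein terminal case, where moreover one needs $-mK_X$ very ample for some bounded $m>1$ to even get a faithful linear action) would leave you with a projective representation of degree in the several dozens, hence with every alternating group up to roughly that degree, a long list of $\PSL_2(q)$'s, several sporadic and classical groups, etc. Eliminating each of these is precisely your ``stage 2,'' which you describe only as a programme; but that is the entire content of the paper's \S\ref{section-Gorenstein} and \S\ref{section-non-Gorenstein}, i.e.\ essentially the whole proof.

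Concretely, what is missing and what the paper actually supplies: in the Gorenstein case one must show $|-K_X|$ is base point free and birational onto an intersection of quadrics up to two exceptions (Lemmas \ref{lemma-base-point-free}--\ref{lemma-quadrics}), that $\rho(X)=1$ and the Fano index is $1$ apart from the Klein cubic (Lemmas \ref{lemma-rho=1}, \ref{lemma-index-Fano}), that there is no invariant hyperplane section, that $X$ is in fact smooth (via Namikawa's bound \eqref{eq-number-singular-points} on $|\Sing(X)|$, the classification of transitive simple permutation groups of degree $\le 26$, and character computations for $\A_7$ and $\PSL_2(11)$), and finally that $7\le g\le 8$, at which point Mukai's linear-section theorems identify $X$. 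In the non-Gorenstein case one needs the orbifold Riemann--Roch/KMMT inequality $\sum n_i(r_i-1/r_i)<24$ to force a single orbit of at least $9$ index-two cyclic quotient points, reduce to $G\simeq\PSL_2(11)$ with $11$ such points, and then derive a numerical contradiction from a Sarkisov-type link through the blow-up of those points --- so this case contributes nothing, which is not something your plan establishes. None of these steps follows from the representation-theoretic dimension count you propose, so as written the argument has a genuine gap rather than being an alternative route.
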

\par\noindent 
However we should mention that in contrast with \cite{Dolgachev-Iskovskikh} 
we do not describe  \textit{actions} of groups $\A_5$, $\A_6$ and $\PSL_2(7)$.
We also do not answer to the question about conjugacy 
groups (iii)-(iv), (vi)-(vii), and (i)-(ii).
\footnote{It was recently proved 
that $X^{\mathrm k}_3$ and $X_{14}^{\mathrm a}$ are not 
birationally $G$-isomorphic (see [Cheltsov I. and Shramov C. arXiv:0909.0918]).
I.~Cheltsov also pointed out to me that non-conjugacy of 
the actions of $\PSp_4(3)$ on the Burkhardt quartic $X_4^{\mathrm b}$
and $\PP^3$ follows from results of M. Mella and C.~Shramov
(see [Mella M.  Math. Ann. (2004) \textbf{330}, 107--126], 
[Shramov C. arXiv:0803.4348]).}

\begin{remark}
The cooresponding varieties in (ii)-(v) of the above theorem 
are rational. Hence these actions define embeddings of $G$ into $\Cr_3(\CC)$.
Varieties $X^{\mathrm k}_3$ and $X_{14}'$ are birationally equivalent 
and non-rational (see Remark \ref{remark-cubic} and \cite{Clemens-Griffiths}).
It is known that a general intersection of a quadric and a cubic is non-rational.
As far as I know the non-rationality of any smooth threefold 
in this family is still not proved. 
\end{remark}

\begin{remark}
(i) 
The orders of the above groups are as follows:
\[
\begin{array}{|c|ccccccc|}
\hline
&&&&&&&
\\[-5pt]
G&\A_5& \A_6& \A_7&\PSL_2(7)&\SL_2(8)& \PSp_4(3)& \PSL_2(11)
\\[5pt]
\hline
&&&&&&&
\\[-10pt]
|G|&60& 360 & 2520 & 168&504 &25920 &660
\\[5pt]
\hline
\end{array}
\]

(ii) 
There are well-known isomorphisms
$\PSp_4 (3)\simeq \SU_4(2)\simeq \Or_5(3)'$, 
$\A_5\simeq\SL_2(4)\simeq \PSL_2(5)$,
$\PSL_2(7)\simeq \GL_3(2)$, and
$\A_6\simeq\PSL_2(9) $ (see, e.g., \cite{atlas}).
\end{remark}

\par
The idea of the proof is quite standard.
It follows the classical ideas (cf. \cite{Dolgachev-Iskovskikh})
but has much more technical difficulties. Here is an outline 
of our approach.

By running the equivariant Minimal Model Program we may 
assume that our group $G$ acts on a Mori-Fano fiber space
$X/Z$. 
Here $Z$ is either a point, a rational curve or a rational surface 
(because a rationally connected surface over $\CC$ must be rational).
Since the group is simple and because
$G$ does not admit any embeddings into $\Cr_2 (\CC)$, we may assume that $Z$ is a point.
The latter means that $X$ is a $G\QQ$-Fano threefold.

\begin{definition}\label{definition-GQ-Fano}
A \emph{$G$-variety} is a variety $X$ provided with 
a biregular action of a finite group $G$.
We say that a normal $G$-variety 
$X$ is \emph{$G\QQ$-factorial} if any $G$-invariant Weil 
divisor on $X$ is $\QQ$-Cartier. 
A projective normal $G$-variety $X$ is called \emph{$G\QQ$-Fano}
if it is $G\QQ$-factorial, has at worst terminal 
singularities, $-K_X$ is ample, and $\rk \Pic (X)^G=1$. 
\end{definition}
Thus the $G$-equivariant Minimal Model Program reduces 
our problem to the classification of finite simple subgroups
in automorphism groups of $G\QQ$-Fano threefolds.
Smooth Fano threefolds are completely classified by 
Iskovskikh \cite{Iskovskikh-1980-Anticanonical} and Mori--Mukai
\cite{Mori1981-82}.
To study the singular case we use estimates for 
the number of singular points and analyze the action 
of $G$ on the singular set.

The structure of the paper is as follows.
In \S \ref{section-examples} we collect some examples
and show that all the cases in our list really occur.
Reduction to the case of $G\QQ$-Fano threefolds
is explained in \S \ref{section-Main-reduction}.
In 
\S \ref{section-Gorenstein} and \S \ref{section-non-Gorenstein}
we study the cases where $X$ is Gorenstein and non-Gorenstein, respectively.
 
\subsection*{Conventions.}
All varieties are defined over the complex number field $\CC$.
$\Sy_n$ and $\A_n$ denote the symmetric and the 
alternating groups, respectively. 
For linear groups over a field $\Bbbk$ we use the standard notations 
$\GL_n(\Bbbk)$, $\SO_n(\Bbbk)$, $\Sp_n(\Bbbk)$ etc.
If the field $\Bbbk$ is finite and contains $q$ elements, then, for short, the
above groups are denoted by $\GL_n(q)$, $\SO_n(q)$, $\Sp_n(q)$ etc.
For a group $G$, we denote by $Z(G)$ and $[G,G]$ its center and derived subgroup,
respectively. If the group $G$ acts on a set $\Omega$, then, for an element 
$P\in \Omega$, its stabilizer is denoted by $G_P$. 
All simple groups are supposed to be non-abelian.
The Picard number of a variety $X$ is denoted by $\rho(X)$.
For a normal variety $X$, $\Cl(X)$ is the Weil divisor class group.
Note that there is a difference between conjugate/non-conjugate
\textit{embeddings} $G \hookrightarrow\Cr_n(\Bbbk)$ and conjugate/non-conjugate 
\textit{subgroups} $G\subset \Cr_n(\Bbbk)$.
In this paper we discuss \textit{subgroups} $G\subset \Cr_n(\Bbbk)$.

\textbf{Acknowledgements.} 
This paper is dedicated to the memory of my teacher V. I. Iskovskikh.
In fact, my interest to Cremona groups was inspired by 
his works.
I would like to acknowledge discussions that I had on this subject 
with  I. Cheltsov, A.~Kuznetsov, and V. Popov.
I am grateful to D. Markushevich for pointing out the reference
\cite{AdlerRamanan1996}.
I am also would like to thank V.~Loginov, A.~A.~Mikhalev, and I.~Chubarov
for useful advises related to the group theory.
The work was conceived during my visit the University of Warwick
in 2008. Part of the work was done at MPIM, Bonn and 
POSTECH, Pohang, Korea.
I would like to thank these institutions for hospitality.
Finally I am  extremely
grateful to the referee, for careful reading and
numerous comments and suggestions that 
helped me to improve the presentation.

\section{Examples}\label{section-examples}
In this section we collect examples.

First of all, the group $\A_5$ acts on $\PP^1$ and $\PP^2$.
This gives a lot of embeddings into $\Cr_3(\CC)$ (by different actions on 
$\PP^1\times \PP^1\times \PP^1$ and $\PP^2\times \PP^1$).
The groups $\A_6$ and $\PSL_2(7)$ admit embeddings into 
$\Cr_2(\CC)$, so they are also can be embedded to $\Cr_3(\CC)$.

\begin{example}
Consider the embedding of $\A_5\subset \PGL_2(\CC)$
as a binary icosahedron group.
Let $H\subset \PGL_2(\CC)$ be another finite subgroup. 
Then there is an action of $\A_5$ on a rational homogeneous variety $\PGL_2(\CC)/H$.
This gives a series of embeddings of $\A_5$ into $\Cr_3(\CC)$.
\end{example}

Trivial examples also provide subgroups of $\PGL_4(\CC)$ 
(see Theorem \ref{theorem-Blichfeldt}):
$\A_5$, $\A_6$, $\A_7$, $\PSL_2(7)$, $\PSp_4 (3)$.
In the examples below we show that a finite simple group acts 
on a (possibly singular) Fano thereefold.
According to \cite{Zhang-Qi-2006} Fano varieties with log terminal singularities are rationally
connected, so our constructions give embeddings of 
a finite simple group into the automorphism group
of some rationally connected variety. 

\begin{example}
The group $\A_5$ acts on 
the smooth cubic $\{\sum_{i=1}^4 x_i^3=0\}\subset \PP^4$
and on the smooth quartic $\{\sum_{i=1}^4 x_i^4=0\}\subset \PP^4$.
These varieties are not rational \cite{Clemens-Griffiths}, \cite{Iskovskikh1971}. 
\end{example}

\begin{example}
The \textit{Segre cubic} $X_3^{\mathrm s}$ is a subvariety in $\PP^5$
given by the equations $\sum x_i=\sum x_i^3=0$.
This cubic has 10 nodes, it is obviously rational, 
and $\Aut X_3^{\mathrm s}\simeq \Sy_6$.
In particular, alternating groups $\A_5$ and $\A_6$ act on $X_3^{\mathrm s}$.
Since $\A_5$ can be embedded into $\A_6$ in two ways, 
this construction gives two embeddings of $\A_5$ into $\Cr_3(\CC)$.
We do not know if they are conjugate or not.
\end{example}

\begin{example}
Assume that $G$ acts on $\CC^5$ so that there are
(irreducible) invariants $\phi_2$ and $\phi_3$ of degree $2$ and $3$,
respectively.
Let $Y\subset \PP^4$ be a (possibly singular) cubic hypersurface
given by $\phi_3=0$ and let 
$R\subset Y$ be the surface given by $\phi_2=\phi_3=0$.
Then $R\in |{-}K_Y|$. Consider the double cover $X\to Y$ ramified 
along $R$. Then $X$ is a Fano threefold.
It can be realized as an intersection of a cubic and 
quadratic cone in $\PP^5$. The action of $G$ lifts to $X$.
There are two interesting cases (cf. \cite{Mukai1988}):
\begin{enumerate}
\item[(a)]
$Y=\{\sum_{i=0}^5 x_i=\sum_{i=0}^5 x_i^3=0 \}\subset \PP^5$
is the Segre cubic, and $R$ is cut out by the equation $\sum_{i=0}^5 x_i^2=0$, 
$G=\A_6$;
\item[(b)]
$Y=\{\sum_{i=0}^4 x_i=\sum_{i=0}^4 x_i^3=0 \}\subset \PP^5$
is a cubic cone, 
and $R$ is cut out by the equation $\sum_{i=0}^5 x_i^2=0$, 
$G=\A_5$.
\end{enumerate}
\end{example}

\begin{example}
\label{example-V6}
Consider the subvariety in $X_6'\subset\PP^6$ given by the equations
$\sigma_1=\sigma_2=\sigma_3=0$, where $\sigma_i$ are symmetric 
polynomials in $x_1,\dots,x_7$.
Then $X_6'$ is Fano smooth threefold, an intersection of 
a quadric and a cubic in $\PP^5$.
The alternating group $\A_7$ naturally acts on $X_6'$.
A general variety $X_6$ in this family is not rational 
\cite{Iskovskikh-Pukhlikov-1996}. The (non-)rationality of $X_6'$
is not known\footnote{The non-rationality of $X_6'$  was proved recently by A. Beauville
[Non-rationality of the symmetric sextic Fano threefold, arXiv:1102.1255].}. 
\end{example}

\begin{example}
\label{example-Klein-cubic}
The automorphism group of the cubic $X^{\mathrm k}_3\subset \PP^4$ given by the equation 
\begin{equation}
\label{equation-Klein-cubic}
x_1^2x_2+x_2^2x_3+x_3^2x_4+x_4^2x_5+x_5^2x_1=0
\end{equation}
is isomorphic to $\PSL_2(11)$. This was discovered by F. Klein, see
\cite{Adler1978} for a complete modern proof.
\footnote{As pointed out by 
J. Ellenberg, the existence of this action can also be seen from the fact that
the cubic $X^{\mathrm k}_3$ is birational to 
$\mathcal A_{11}^{lev}$, the moduli space of abelian surfaces with 
$(1,11)$-polarization and canonical level structure, see
[M. Gross and S. Popescu. 
The moduli space of $(1,11)$-polarized abelian surfaces is unirational.
\textit{Compositio Math.}, 126:1--23, 2001]. 
} 
\end{example}

\begin{example}
\label{Burkhardt-quartic}
The \textit{Burkhardt quartic} $X_4^{\mathrm b}$ is a subvariety in $\PP^5$
given by $\sigma_1=\sigma_4=0$, where 
$\sigma_i$ is $i$-th symmetric function in $x_1,\dots,x_6$.
The automorphism group of $X_4^{\mathrm b}$ is 
isomorphic to $\PSp_4(3)$, see \cite{Shephard-Todd}.
\footnote{Similar to the previous example the existence of this action follows  also 
from an interpretation of $X_4^{\mathrm b}$ as a moduli space, see, e.g., 
[B. van Geemen.
Projective models of Picard modular varieties. in
\textit{Classification of irregular varieties} (Trento, 1990), 68--99,
Lecture Notes in Math., \textbf{1515}, Springer, Berlin, 1992]}
\end{example}

\begin{example}
\label{example-V14}
Let $W$ be a $5$-dimensional irreducible representation of $\tilde G:=\SL_2(11)$.
Consider
the following skew symmetric matrix whose entries are linear forms on $W$:
\[
A:=\begin{pmatrix}
0&x_4&x_5&x_1&x_2&x_3
\\
-x_4&0&0&x_3&-x_1&0
\\
-x_5&0&0&0&x_4&-x_2
\\
-x_1&-x_3&0&0&0&x_5
\\
-x_2&x_1&-x_4&0&0&0
\\
-x_3&0&x_2&-x_5&0&0
\end{pmatrix}
\]
The matrix $A$ can be regarded as a non-trivial $G$-equivariant linear map from $W$ to 
$\wedge^2 V$, where $V$ is a $6$-dimensional 
irreducible representation of $\tilde G$, see 
\cite[\S 47]{AdlerRamanan1996}.
Thus the representation $\wedge^2 V$ is decomposed as
$\wedge^2 V=W\oplus W^{\perp}$, where $\dim W^{\perp}=10$.
Let $X_{14}^{\mathrm a}:=\PP(W^{\perp})\cap \Gr(2,V)\subset \PP(\wedge^2 V)$.

It is easy to check that $\rk A(w)\ge 4$ for 
any $w\in W$, $w\neq 0$. Thus $A$ is a regular net
of skew forms in the sense of \cite{Kuznetsov2004-e}.
The Pfaffian of $A$ defines a cubic hypersurface
$X_3\subset \PP^4$.
This hypersurface $X_3$ is given by the equation \eqref{equation-Klein-cubic}
because the action of $\SL_2(11)$ on $\CC^5$ has only one invariant of degree
$3$ (see \cite{Adler1978}). So,
$X_3=X^{\mathrm k}_3$.
Hence it is smooth and so is $X_{14}^{\mathrm a}$ by \cite[Prop. A.4]{Kuznetsov2004-e}.
Then by the adjunction formula $X_{14}^{\mathrm a}$ is 
a Fano threefold of Picard number one and genus $8$
\cite{Iskovskikh-Prokhorov-1999}.
By construction $X_{14}^{\mathrm a}$ admits a non-trivial action of $G$.
\end{example} 

\begin{remark}
\label{remark-cubic}
It turns out that $X_{14}^{\mathrm a}$ and $X^{\mathrm k}_3$ are birationally equivalent
(and not rational \cite{Clemens-Griffiths}), 
so our construction does not give an embedding of $G$ into
$\Cr_3(\CC)$. 
The birational equivalence of $X_{14}^{\mathrm a}$ and $X^{\mathrm k}_3$ can be seen from the following construction
\cite{Puts1982}.
Given a smooth section $X_{14}=\Gr(2,6)\cap \PP^9$, let 
$Y\subset \PP^{5}$ be the variety swept out by 
lines representing points of $X_{14}\subset \Gr(2,6)$.
Then $Y$ is a singular quartic fourfold. 
It is called the \textit{Palatini quartic}
of $X_{14}$.
In our case $X_{14}=X_{14}^{\mathrm a}$
the equation of $Y$ is as follows \cite[Cor. 50.2]{AdlerRamanan1996}:
\begin{multline*}
x_0^4+
x_0(x_1^2x_2+x_2^2x_3+x_3^2x_4+x_4^2x_5+x_5^2x_1) +
\\
+x_1^2x_3x_5+x_2^2x_4x_1+x_3^2x_5x_2+x_4^2x_1x_3+
x_5^2x_2x_4=0.
\end{multline*}
Let $H$ be a general hyperplane section of $Y$.
Then $H$ is a quartic threefold with 25 singular points
which is birational to both $X_{14}^{\mathrm a}$ and $X^{\mathrm k}_3$ \cite{Puts1982},
\cite[Theorem 13.11]{Clemens-Griffiths}.
Note however that our birational construction is not $G$-equivariant.
We do not know whether our two embeddings of $G$ into 
$\Bir (X_{14}^{\mathrm a}) \simeq \Bir (X^{\mathrm k}_3)$
are conjugate or not.
\end{remark}

\begin{example}[\cite{Mukai1992b}]
\label{example-genus=7}
There is a curve $C^{\mathrm m}$ of genus $7$ for which 
the Hurwitz bound of the automorphism group is achieved
\cite{Macbeath1965}.
In fact, $\Aut C^{\mathrm m}\simeq \SL_2(8)$.
The ``dual'' Fano threefold of genus $7$
has the same automorphism group.
The construction due to S. Mukai \cite{Mukai1992b, mukai-1995-1}
is as follows.
Let $Q\subset \PP^8$ be a smooth quadric.
All $3$-dimensional projective subspaces of $\PP^8$ contained in $Q$
are parameterized by a smooth irreducible $\SO_9(\CC)$-homogeneous variety
$\LGr(4,9)$, so-called, Lagrangian Grassmannian.
In fact, $\LGr(4,9)$ is 
a Fano manifold of dimension $10$
and Fano index $8$ with $\rho(\LGr(4,9))=1$.
The positive generator of $\Pic (\LGr(4,9))\simeq \ZZ$
determines an embedding $\LGr(4,9)\hookrightarrow \PP^{15}$.
In fact, this embedding is given by the spinor coordinates on 
$\LGr(4,9)$. It is known 
that any smooth Fano threefold $X_{12}^{\mathrm m}$ of genus $7$ with 
$\rho(X_{12}^{\mathrm m})=1$
is isomorphic to a section of $\LGr(4,9)\subset \PP^{15}$ by a subspace of dimension 
$8$ \cite{Mukai-1988}. Similarly, any canonical 
curve $C$ of genus $7$ is isomorphic to a section of $\LGr(4,9)\subset \PP^{15}$
by a subspace of dimension 
$6$ if and only if $C$ has no $g^1_4$
\cite{mukai-1995-1}.
The group $\SL_2(8)$ has a $9$-dimensional representation $U$
and there is an invariant quadric $Q\subset \PP(U)$.
Hence $\SL_2(8)$ naturally acts on $\LGr(4,9)$.
This action lifts to $\PP^{15}$ so that there are two 
invariant subspaces $\Pi_1$ and $\Pi_2$ of dimension $6$ and $8$,
respectively. The intersections $\LGr(4,9)\cap \Pi_1$ and $\LGr(4,9)\cap \Pi_2$
are our curve $C^{\mathrm m}$ \cite[Table 1]{mukai-1995-1} and a smooth Fano threefold of genus $7$ with $\rho=1$
(see \cite[Lemma 3.2]{Iliev2004}). 
Recall that any smooth Fano threefold of genus $7$ with $\rho=1$ is rational
(see, e.g., \cite{Iskovskikh-Prokhorov-1999}).
Therefore, the above construction provides an embedding of
$\SL_2(8)$ into $\Cr_3(\CC)$.
\end{example}

\section{Finite linear and permutation groups}
\begin{case}
\textbf{Finite linear groups.}
Let $V$ be a vector space. 
An irreducible subgroup $G\subset \GL(V)$ is said to be 
\textit{imprimitive} if 
there exists a non-trivial decomposition $V=\oplus V_i$
such that $G$ permutes subspaces $V_i$.
In this case $G$ contains a non-trivial reducible normal subgroup $N$
such that $g V_i= V_i$ for all $g\in N$ and all $i$.
A group $G$ is said to be \textit{primitive} if it is irreducible and 
not imprimitive. Clearly, a simple linear group has to be primitive if it is irreducible.
\end{case}

All finite primitive linear groups of small degree 
are classified, see \cite{Blichfeldt}, \cite{Brauer-1967}, and \cite{Lindsey-1971-lg6}.
Basically we need only the list of the simple ones.

\begin{theorem}[{\cite{Blichfeldt}}]
\label{theorem-classification-3}
Let $G\subset \PGL_3(\CC)$ be a finite irreducible simple 
subgroup and let $\tilde G \subset \SL_3(\CC)$ be its preimage under
the natural map $\SL_3(\CC)\to \PGL_3(\CC)$ such that 
$[\tilde G,\tilde G]\supset Z(\tilde G)$. Then only
one of the following cases is possible:
\begin{enumerate}
\item 
the icosahedral group, $G\simeq \tilde G \simeq\A_5$;
\item
the Valentiner group, $G\simeq\A_6$, $Z(\tilde G)\simeq \muu_3$;
\item 
the Klein group, $G\simeq\tilde G \simeq\PSL_2(7)$;
\item
the Hessian group, $G\simeq (\muu_3)^2\rtimes \SL_2(3)$, $|G|=216$,
$Z(\tilde G)\simeq \muu_3$;
\item
subgroups of the Hessian group of index $3$ and $6$.
\end{enumerate}
\end{theorem}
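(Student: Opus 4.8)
The plan is to follow Blichfeldt's classical analysis of finite collineation groups of the projective plane. Since a non-abelian simple group is automatically primitive, I would in fact classify \emph{all} finite primitive subgroups $G\subset\PGL_3(\CC)$, the simple ones then being exactly cases (i)--(iii). First I would pass to the preimage $\tilde G\subset\SL_3(\CC)$ acting on $V=\CC^3$, so that $Z(\tilde G)=\tilde G\cap\CC^\times I\subseteq\muu_3$, $\tilde G/Z(\tilde G)=G$, and $\tilde G$ is irreducible and primitive. By Clifford's theorem, for any $N\trianglelefteq\tilde G$ the module $V|_N$ is isotypic (primitivity forbids a nontrivial permutation of the isotypic blocks); in particular an abelian normal subgroup acts by scalars, hence lies in $Z(\tilde G)$. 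So \emph{every abelian normal subgroup of $\tilde G$ is central.} Now let $M/Z(\tilde G)$ be a minimal normal subgroup of $G$: it is either (a) elementary abelian, or (b) a direct product $S^{k}$ of copies of a non-abelian simple group $S$.

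In case (a), $M$ is a non-central normal subgroup with $[M,M]\subseteq Z(\tilde G)$; since an abelian $M$ would be central, $M$ is non-abelian, and a Schur--Zassenhaus argument shows $M$ must be a $3$-group (if the relevant prime were $\neq 3$, $M$ would be the direct product of the central $3$-part and an elementary abelian $p$-complement, hence abelian). As $M$ acts faithfully and homogeneously on $V$ and $[M:Z(M)]$ must be $\le 9$ while $>3$, one gets $[M:Z(M)]=9$, and since $Z(M)$ is characteristic in $M$ hence central in $\tilde G$, in fact $Z(M)=Z(\tilde G)=\muu_3$ and $M$ is extraspecial of order $27$ (the exponent-$3$ Heisenberg group $3^{1+2}$), with image $\bar H\simeq(\muu_3)^2$ in $\PGL_3(\CC)$. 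Then $G\subseteq N:=N_{\PGL_3(\CC)}(\bar H)$, and conjugation gives $N/\bar H\hookrightarrow\Sp_2(\FF_3)=\SL_2(3)$, an isomorphism because every symplectic automorphism is realized (Weil/Schr\"odinger representations), so $|N|=216$. Now $\bar H$ preserves exactly four ``frames'' in $\PP^2$ --- the eigenline triples of the four maximal subgroups of $3^{1+2}$ --- permuted via $N/\bar H=\SL_2(3)\twoheadrightarrow\PSL_2(3)=\A_4$ acting on the four points of $\PP^1(\FF_3)$: a subgroup $\Gamma$ with $\bar H\subseteq\Gamma\subseteq N$ is monomial, hence imprimitive, exactly when its image in $\A_4$ has a fixed point, and the fixed-point-free images are $\ZZ/2$, the Klein four-group, and $\A_4$, with preimages in $\SL_2(3)$ of orders $4$, $8$, $24$. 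Together with the remark (from centrality of abelian normal subgroups again, plus the fact that $\SL_2(3)$ has no faithful $3$-dimensional projective representation) that every primitive subgroup of $N$ contains $\bar H$, this gives exactly the three primitive groups of orders $216$, $72$, $36$ --- cases (iv) and (v).

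In case (b) a dimension count kills $k\ge 2$: $V|_M$ isotypic would be an outer tensor product of nontrivial representations of the $k$ factors, each of dimension $\ge 2$, so $\dim V\ge 2^{k}\ge 4$; hence $S\trianglelefteq G$, and since $C_G(S)\trianglelefteq G$ meets $S$ trivially and a further application of Clifford forces it central, hence trivial, $G$ is almost simple; but no simple $S\subset\PGL_3(\CC)$ admits a proper almost-simple overgroup here (the two degree-$3$ projective representations of $\A_5$, and those of $\PSL_2(7)$, are swapped by the outer automorphism, so $\Sy_5$ and $\PGL_2(7)$ do not embed), so $G=S$ is simple and $\tilde G$ is a quasi-simple central extension $Z.G$, $Z\subseteq\muu_3$, with a faithful irreducible $3$-dimensional representation. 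It remains to list the simple $G$ for which this occurs; here I would invoke Blichfeldt's bound (no non-scalar element of a primitive subgroup of $\GL_3(\CC)$ has all three eigenvalues in an open arc of length $<\pi/3$) together with the accompanying analysis of fixed-point configurations of collineations and of normalizers of Sylow subgroups to bound the element orders --- in fact to $\{1,\dots,7\}$ --- hence $|G|$ by Jordan's theorem, and then a finite inspection of the simple groups below the bound, checking through character tables and Schur multipliers which ones (or which triple covers) carry a faithful irreducible degree-$3$ representation, leaves exactly $\A_5$ (the icosahedral group, $Z=1$), $3.\A_6$ (the Valentiner group, $Z\simeq\muu_3$; $\A_6$ itself has no degree-$3$ representation), and $\PSL_2(7)\simeq\GL_3(2)$ (the Klein group, $Z=1$). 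The hard part is this last step: the $\pi/3$ estimate and, above all, making the ensuing search demonstrably complete --- Blichfeldt does this by a painstaking geometric case analysis of the collineations of orders $2,3,5,7$ and of the primitive groups they generate, and that analysis, rather than the structural Steps 1--2, is where essentially all of the work lies.
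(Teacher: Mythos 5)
The paper does not prove this statement at all---it is quoted from Blichfeldt's 1917 classification of finite collineation groups of the plane and used as a black box---so there is no argument of the paper's to measure yours against. Judged on its own, your outline is the correct and essentially standard organization of that classification, and your implicit reading of the hypothesis is the right one: as printed the theorem says ``simple'' yet lists the non-simple Hessian group and its subgroups in (iv)--(v), so what is really being classified is the primitive subgroups of $\PGL_3(\CC)$, the simple ones being exactly (i)--(iii). Your structural reductions are sound. Clifford's theorem plus primitivity does force every abelian normal subgroup of $\tilde G$ to be scalar; the minimal-normal-subgroup dichotomy then correctly yields either an extraspecial normal subgroup of order $27$ --- whence $G$ lies in the normalizer of order $216$, and the fixed-point count on the four invariant triangles singles out precisely the subgroups of index $1$, $3$, $6$ as the primitive ones, giving (iv)--(v) --- or a quasi-simple $\tilde G$ (the tensor-product dimension count disposing of $k\ge 2$, and the centralizer argument disposing of proper almost-simple overgroups such as $\Sy_5$ and $\PGL_2(7)$, are both fine). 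The only imprecision in this part is that both extraspecial groups of order $27$ admit faithful $3$-dimensional irreducibles, so pinning down the exponent needs a word --- but since both have image $(\muu_3)^2$ in $\PGL_3(\CC)$, this does not affect the list.

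The one genuine gap is the one you flag yourself: the assertion that the only quasi-simple irreducible subgroups of $\SL_3(\CC)$ are $\A_5$, the triple cover $3.\A_6$, and $\PSL_2(7)$ is not proved but deferred to Blichfeldt's $60^{\circ}$ eigenvalue bound and the ensuing analysis of element orders and Sylow normalizers. That enumeration is where essentially all of the content of the theorem resides, so as a self-contained proof the proposal is incomplete at exactly that point --- though no more incomplete than the paper, which cites \cite{Blichfeldt} for the entire statement. Some external input of this kind (Blichfeldt's bound, Jordan's theorem with explicit constants, or a modern list of finite subgroups of $\SL_3(\CC)$) is unavoidable here, and your reduction isolates it cleanly.
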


\begin{theorem}[{\cite{Blichfeldt}}]
\label{theorem-Blichfeldt}
Let $G\subset \PGL_4(\CC)$ be a finite irreducible simple 
subgroup and let $\tilde G \subset \SL_4(\CC)$ be its preimage under
the natural map $\SL_4(\CC)\to \PGL_4(\CC)$ such that 
$[\tilde G,\tilde G]\supset Z(\tilde G)$. Then
one of the following cases is possible:
\begin{enumerate}
\item 
$G\simeq \tilde G\simeq\A_5$, 
\item 
$G\simeq \A_6$, $Z(\tilde G)\simeq \muu_2$,
\item 
$G\simeq \A_7$, $Z(\tilde G)\simeq \muu_2$,
\item
$G\simeq \A_5$, $\tilde G\simeq \SL_2(5)$, $Z(\tilde G)\simeq \muu_2$,
\item
$G\simeq \PSL_2(7)$, $\tilde G\simeq \SL_2(7)$, $Z(\tilde G)\simeq \muu_2$,
\item
$G\simeq\PSp_4 (3)$, 
$\tilde G=\Sp_4(3)$, $Z(\tilde G)\simeq \muu_2$.
\end{enumerate}
\end{theorem}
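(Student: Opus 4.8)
The plan is to reduce the classification of finite irreducible simple subgroups of $\PGL_4(\CC)$ to Blichfeldt's classical determination of finite primitive subgroups of $\PGL_4(\CC)$ (equivalently, of collineation groups in four homogeneous variables), and then to extract the simple ones together with the structure of their Schur covers inside $\SL_4(\CC)$. First I would recall that by the remark opening this section a simple subgroup $G\subset\GL(V)$ with $\dim V=4$ is automatically primitive: it is irreducible since a simple group has no nontrivial normal reducible subgroup forcing a block decomposition, and imprimitivity would produce a nontrivial homomorphism from $G$ to the symmetric group permuting the blocks, whose kernel is a proper normal subgroup of the simple group $G$, hence trivial or all of $G$; a faithful action of a nonabelian simple group on at most $4$ blocks by a diagonal-type imprimitive representation is then excluded by order/degree considerations. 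So $G$ lies on Blichfeldt's list of primitive quaternary collineation groups.

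Next I would go through that list and keep only the entries whose collineation group is simple (non-abelian). Blichfeldt's enumeration contains, besides the reducible-on-the-cover "mixed" and monomial families already removed by primitivity, exactly the groups of orders $60$, $360$, $2520$, $168$ and $25920$ (and the intransitive/reducible ones we discard), i.e.\ $\A_5$, $\A_6$, $\A_7$, $\PSL_2(7)\simeq\GL_3(2)$, and $\PSp_4(3)\simeq\SU_4(2)$; these are precisely the simple groups possessing a faithful projective $4$-dimensional complex representation. For each I would then identify the preimage $\tilde G\subset\SL_4(\CC)$ with $[\tilde G,\tilde G]\supset Z(\tilde G)$ by computing the Schur multiplier and checking which central extension admits a genuine (not merely projective) $4$-dimensional representation: for $\A_5$ both the trivial extension (the $4$-dimensional irreducible summand of the permutation representation, giving $G\simeq\tilde G\simeq\A_5$) and the double cover $\SL_2(5)$ (via $\SL_2(5)\hookrightarrow\SL_2(\CC)\times\SL_2(\CC)\subset\SL_4(\CC)$, or a $4$-dimensional faithful irreducible of $2.\A_5$) occur, so cases (i) and (iv); for $\A_6$, $\A_7$ the $4$-dimensional faithful irreducibles are those of the double covers $2.\A_6$, $2.\A_7$, giving $Z(\tilde G)\simeq\muu_2$, cases (ii)–(iii); for $\PSL_2(7)$ the relevant cover is $\SL_2(7)=2.\PSL_2(7)$ with its $4$-dimensional irreducible, case (v); and for $\PSp_4(3)$ the cover is $\Sp_4(3)=2.\PSp_4(3)$ acting on $\CC^4$ via the standard symplectic representation, case (vi). In each case the condition $[\tilde G,\tilde G]\supset Z(\tilde G)$ pins down $\tilde G$ uniquely (the perfect central extension by $\muu_2$, or $\tilde G=G$ when the $4$-dimensional representation already linearizes), which is how the statement is phrased.

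The main obstacle is not the group theory per se but making the passage from "projective representation on Blichfeldt's list" to "the precise cover $\tilde G\subset\SL_4(\CC)$" airtight: one must verify that the $4$-dimensional representations are genuinely faithful on the asserted cover and no larger center survives, and that for $\A_5$ both the split and non-split possibilities genuinely arise while for the others only the $\muu_2$-cover does — this is exactly the content one reads off from the character tables / \cite{atlas}, but it is the step that requires care rather than a one-line citation. An alternative, more self-contained route would be to redo the primitive classification directly using the standard Jordan–Blichfeldt argument (bounding normalizers of abelian normal subgroups, using Jordan's theorem to control the structure, and a case analysis on the possible primitive normal subgroups), but since Blichfeldt's theorem is quoted as a black box this is unnecessary here; I would simply cite \cite{Blichfeldt} for the enumeration and \cite{atlas} for the cover structure.
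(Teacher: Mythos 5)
The paper gives no proof of this statement at all --- it is quoted directly from Blichfeldt's classification of finite primitive collineation groups in four variables --- and your proposal follows exactly that route (simple $\Rightarrow$ primitive, cite \cite{Blichfeldt} for the enumeration, read the cover structure off the character tables in \cite{atlas}), so it matches the paper's treatment. One small correction: the realization of case (iv) via $\SL_2(5)\hookrightarrow\SL_2(\CC)\times\SL_2(\CC)\subset\SL_4(\CC)$ does not work (the tensor product of the two $2$-dimensional representations kills the center, and the block-diagonal sum is reducible); the correct model is $\mathrm{Sym}^3$ of a $2$-dimensional representation, i.e.\ the faithful $4$-dimensional irreducible of $2.\A_5$ that you also mention, so the conclusion stands.
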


\begin{theorem}[{\cite{Brauer-1967}}]
\label{theorem-Brauer}
Let $G\subset \PGL_5(\CC)$ be a finite irreducible simple 
subgroup and let $\tilde G \subset \SL_5(\CC)$ be its preimage under
the natural map $\SL_5(\CC)\to \PGL_5(\CC)$ such that 
$[\tilde G,\tilde G]\supset Z(\tilde G)$. Then $G\simeq \tilde G$ and
only one of the following cases is possible:
\[
\A_5, 
\quad
\A_6, 
\quad
\PSL_2(11), 
\quad
\PSp_4 (3).
\]
\end{theorem}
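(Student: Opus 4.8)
The plan is to follow the same strategy that underlies Blichfeldt's classifications in Theorems \ref{theorem-classification-3} and \ref{theorem-Blichfeldt}, now applied in degree $5$, and to note that the references \cite{Brauer-1967} (and the companion work of Brauer on groups of degree $5$) already carry out the hard classification; my task is to extract from that the list of \emph{simple} groups and to verify that each entry does occur. First I would fix a finite irreducible simple subgroup $G\subset \PGL_5(\CC)$ and lift it to $\tilde G\subset\SL_5(\CC)$ with $[\tilde G,\tilde G]\supset Z(\tilde G)$; since $G$ is simple, $\tilde G$ is either $G$ itself or a central extension by $\muu_5$, the only nontrivial cyclic quotient of $\SL_5(\CC)$ acting on a $5$-dimensional space. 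A central extension by $\muu_5$ would have to be a perfect central extension, i.e. a quotient of the Schur cover; one checks from the known Schur multipliers that none of the simple groups admitting a faithful $5$-dimensional projective representation has $\muu_5$ as a subquotient of its multiplier, so in fact $G\simeq\tilde G$ and $\tilde G$ acts as a genuine linear representation.

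Next I would invoke the structural dichotomy from the ``Finite linear groups'' case: $\tilde G$, being simple, is a primitive subgroup of $\GL_5(\CC)$. For primitive groups of prime degree $p=5$ there is the classical Blichfeldt--Brauer machinery: either $\tilde G$ is \emph{monomial} (but a simple group cannot be monomial of prime degree, since the kernel of the permutation action on the $5$ coordinate lines would be a proper normal subgroup), or $\tilde G$ normalizes an extraspecial group $5^{1+2}$ (the Heisenberg/Hessian-type case, which gives solvable groups, hence not our simple $G$), or $\tilde G$ lies in one of finitely many exceptional classes determined by a bound on $|\tilde G|$ in terms of the degree. Running through the list of quasi-simple groups with an irreducible faithful character of degree exactly $5$ — using the Atlas \cite{atlas} and the Landazuri--Seitz-type lower bounds for characters of groups of Lie type — leaves precisely $\A_5$ (standard $5$-dimensional permutation-minus-trivial summand), $\A_6$ (a $5$-dimensional irreducible, from the exceptional outer structure of $\A_6$), $\PSL_2(11)$ (the two $5$-dimensional irreducibles coming from the exceptional $5$-point action), and $\PSp_4(3)\simeq\U_4(2)$ (which has a $5$-dimensional irreducible, visible through $\Or_5(3)'$); every other simple group has minimal faithful degree $>5$. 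That these four do occur is immediate from the representations just named, so all four cases are realized.

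The main obstacle is the exhaustiveness of the character-degree enumeration: one must be sure that no simple group of Lie type in small characteristic, and no sporadic group, sneaks in with a $5$-dimensional irreducible. This is exactly what the Landazuri--Seitz bounds (for the Lie type case) and the Atlas (for sporadic and small alternating/classical groups) settle, and it is the content of \cite{Brauer-1967}; I would cite that rather than reprove it. The only genuinely new verification beyond quoting Brauer is the ruling-out of a $\muu_5$-central extension in the first paragraph and the bookkeeping that the four surviving groups indeed satisfy $G\simeq\tilde G$, both of which are short.
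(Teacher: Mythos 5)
The paper gives no proof of this statement at all --- it is quoted directly from Brauer's classification \cite{Brauer-1967} --- and your proposal correctly locates the substance in that reference, adding only the Schur-multiplier check that $G\simeq\tilde G$ (none of $\A_5$, $\A_6$, $\PSL_2(11)$, $\PSp_4(3)$ has $5$ dividing the order of its multiplier, so a perfect central extension by $\muu_5$ is impossible) and the observation that each of the four groups really does have an irreducible $5$-dimensional representation; both additions are correct, so your route is essentially the paper's. One small inaccuracy worth fixing: in degree $5$ the normalizer of the extraspecial group $5^{1+2}$ involves $\Sp_2(5)\simeq\SL_2(5)$ and is therefore \emph{not} solvable, unlike the Hessian case in degree $3$; the correct reason this case contributes no simple group is that any subgroup containing the extraspecial part has a proper nontrivial normal subgroup, not solvability.
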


\begin{theorem}[{\cite{Lindsey-1971-lg6}}]
\label{theorem-Lindsey}
Let $G\subset \GL_6(\CC)$ be a finite irreducible simple 
subgroup. Then $G$ is isomorphic to one of the following groups:
\[
\A_7,
\quad
\PSL_2(7),
\quad
\PSp_4(3),
\quad
\SU_3(3).
\]
\end{theorem}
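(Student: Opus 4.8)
\noindent\emph{Proof outline.}
Since $G$ is non-abelian simple, every non-trivial complex representation of $G$ is faithful, so the hypothesis says precisely that $G$ has a faithful irreducible $6$-dimensional complex representation $\rho\colon G\hookrightarrow\GL_6(\CC)$, and the plan is to reduce the classification of such $G$ to a finite check. First I would observe that $\rho(G)$ is primitive. Indeed, if $\rho$ were imprimitive, $G$ would permute transitively the summands of a block decomposition $\CC^6=V_1\oplus\dots\oplus V_k$ with $kd=6$ and $d=\dim V_i$; for $k\in\{2,3\}$ the homomorphism $G\to\Sy_k$ is non-trivial and its target is solvable, contradicting simplicity, while for $k=6$ the representation $\rho$ is monomial and $G$ embeds into $\Sy_6$ through its action on the six lines, so $G\cong\A_5$ or $\A_6$ --- but neither of these has a $6$-dimensional irreducible character, a contradiction. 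Thus $G$ is a finite primitive simple subgroup of $\GL_6(\CC)$.

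Next I would bound $|G|$. Because $6$ is not a prime power, $G$ contains no irreducibly acting normal subgroup of extraspecial (symplectic) type, so the structure theory of primitive complex linear groups is available in its ``almost simple'' form. Appealing to Brauer's analysis of linear groups of degree $n$ whose order is divisible by a prime $p$ exactly to the first power, one shows that a prime $p\ge 11$ dividing $|G|$ forces a cyclic Sylow $p$-subgroup together with arithmetic conditions on $\dim\rho$ and on the character values at $p$-elements that are incompatible with $\dim\rho=6$ and with $G$ simple; hence $|G|$ is a $\{2,3,5,7\}$-number. (Alternatively, invoking the classification of finite simple groups one uses the Landazuri--Seitz lower bounds on the smallest faithful representation degrees of groups of Lie type together with the bound $n-1$ for $\A_n$, $n\ge 7$; either way one is reduced to the finite list of simple $\{2,3,5,7\}$-groups, among them $\A_5,\dots,\A_{10}$, $\PSL_2(7)$, $\PSL_2(8)$, $\PSL_2(49)$, $\PSL_3(4)$, $\SU_3(3)$, $\PSp_4(3)$, $\PSp_6(2)$, $\Or_8^+(2)$.)

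The last step is bookkeeping with character tables: one keeps those groups on the list possessing an irreducible character of degree $6$. The groups $\A_7$ (the standard $6$-dimensional representation), $\PSL_2(7)$ (the degree $q-1$ representation with $q=7$), $\PSp_4(3)$ (restriction of the $6$-dimensional reflection representation of $W(E_6)$) and $\SU_3(3)$ all qualify, whereas the rest are excluded since $6$ is absent from their degree sets --- for instance $\A_5$ has degrees $1,3,3,4,5$, $\A_6$ has $1,5,5,8,8,9,10$, $\PSL_2(8)$ has $1,7,7,7,8,9,9,9$, and $\A_8$, $\A_9$, $\A_{10}$, $\PSL_3(4)$, $\PSL_2(49)$, $\PSp_6(2)$, $\Or_8^+(2)$ all have minimal faithful degree at least $7$. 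This leaves exactly the four groups in the statement, and all four indeed occur. I expect the main obstacle to be the order bound, i.e.\ controlling which simple groups can act irreducibly in degree $6$: this is precisely where \cite{Lindsey-1971-lg6} does its real work, extending the lower-degree classifications (Theorems \ref{theorem-classification-3}, \ref{theorem-Blichfeldt}, \ref{theorem-Brauer}) by delicate character-theoretic and modular arguments.
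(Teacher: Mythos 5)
The paper offers no proof of this statement to compare yours against: it is quoted directly from Lindsey's 1971 classification of finite linear groups of degree six, and is used in the paper purely as a black box. Judged on its own terms, your outline follows the standard strategy for such classifications (reduce to the primitive case, bound the set of primes dividing $|G|$, then read off character degrees), the reduction to primitivity and the final bookkeeping are sound, and the resulting list is correct. The only slip in the last paragraph is harmless: $\SL_2(8)$ has four, not three, irreducible characters of degree $7$ (its degrees are $1$, $7,7,7,7$, $8$, $9,9,9$), which does not affect the absence of degree $6$.

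The genuine gap is the middle step, and you partly acknowledge it yourself. The claim that no prime $p\ge 11$ divides $|G|$ is not a routine corollary of ``Brauer's analysis'': the classical Blichfeldt/Feit--Thompson normal-Sylow theorems for a faithful degree-$n$ representation only dispose of primes $p>2n+1=13$, so $p=11$ and $p=13$ each require the delicate block-theoretic arguments that constitute the bulk of Lindsey's paper (note how close the call is: $\PSL_2(11)$ and $\PSL_2(13)$ have faithful irreducibles of degrees $5$ and $7$ respectively). Likewise, the assertion that the simple groups of order divisible only by $2,3,5,7$ form a known finite list is itself a CFSG-level input, and your sample list omits, for instance, the Hall--Janko group and $\operatorname{PSU}_4(3)$ (neither has a degree-$6$ character, so the conclusion survives). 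In short, what you have is a correct proof modulo either Lindsey's hard character-theoretic work or the classification of finite simple groups together with minimal-degree bounds of Landazuri--Seitz type; since the paper itself invokes exactly this black box by citing \cite{Lindsey-1971-lg6}, your proposal should be read as a faithful reconstruction of the shape of the argument rather than a self-contained proof.
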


\begin{lemma}
\label{lemma-quadric4}
Let $G$ be a finite simple group.
Assume that $G$ admits an embedding into $\operatorname{PSO}_n(\CC)$
with $n\le 6$ and does not admit any embeddings into $\Cr_2(\CC)$. Then $n=6$ and $G$ is 
isomorphic to $\A_7$ or $\PSp_4(3)$. 
\end{lemma}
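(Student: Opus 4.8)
The idea is to leverage the classification theorems already stated (Theorems \ref{theorem-classification-3}--\ref{theorem-Lindsey}) together with Theorem \ref{theorem-main-DI}, and to handle the remaining small cases by hand. Suppose $G$ is a finite simple group with an embedding $G \hookrightarrow \operatorname{PSO}_n(\CC)$, $n \le 6$. Since $G$ is simple and non-abelian, the image is a non-trivial subgroup, and because $\operatorname{PSO}_n(\CC) \subset \PGL_n(\CC)$, we get an embedding $G \hookrightarrow \PGL_n(\CC)$. For $n \le 2$ the target is $\PGL_2(\CC) = \Cr_1(\CC) \subset \Cr_2(\CC)$, so these are excluded by hypothesis. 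For $n = 3, 4, 5$ we apply Theorems \ref{theorem-classification-3}, \ref{theorem-Blichfeldt}, \ref{theorem-Brauer} respectively; but first I must check irreducibility of the image, or reduce to it.

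\textbf{Reducing to the irreducible case.} A simple non-abelian subgroup $G \subset \PGL_n(\CC)$ need not lift to an irreducible subgroup of $\GL_n(\CC)$ a priori, so I would argue as follows. Lift $G$ to $\tilde G \subset \GL_n(\CC)$ (a central extension of $G$ by a finite cyclic group) and decompose the $n$-dimensional representation into irreducibles $V = \bigoplus V_i$. If some $V_i$ has $\dim V_i \ge 2$ and $\tilde G$ acts on it with image containing $G$ (which happens since $G$ is simple, so the kernel of $\tilde G \to \GL(V_i)$ is central, hence the image of $G$ in $\PGL(V_i)$ is all of $G$ or trivial), we obtain $G \hookrightarrow \PGL(V_i)$ with $\dim V_i \le n \le 6$ and the representation now irreducible after replacing $\tilde G$ by its image. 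If instead $G$ acts trivially on every $V_i$ of dimension $\le n-1$, then since the $V_i$ of dimension $1$ give only abelian (hence trivial, as $G$ is perfect) images, $G$ must act faithfully and irreducibly on the whole of $V$ — the remaining case. So in all cases we land in: $G \hookrightarrow \PGL_m(\CC)$ irreducibly for some $2 \le m \le 6$. For $m = 6$ I use Theorem \ref{theorem-Lindsey} applied to the preimage $\tilde G \subset \GL_6(\CC)$.

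\textbf{Extracting the list.} Going through the classifications: for $m = 3$ the only simple group is $\A_5 \subset \Cr_2(\CC)$, excluded. For $m = 4$: $\A_5$, $\A_6$, $\PSL_2(7)$ (all in $\Cr_2(\CC)$), and $\A_7$ and $\PSp_4(3)$ (not in $\Cr_2(\CC)$ by Theorem \ref{theorem-main-DI}). For $m = 5$: $\A_5$, $\A_6$, $\PSL_2(7)$ are excluded; the survivors are $\PSL_2(11)$ and $\PSp_4(3)$. For $m = 6$: $\A_7$, $\PSL_2(7)$, $\PSp_4(3)$, $\SU_3(3)$. So before imposing the orthogonal condition, the candidate simple groups outside $\Cr_2(\CC)$ are $\A_7$, $\PSp_4(3)$, $\PSL_2(11)$, and $\SU_3(3)$. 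Now I invoke $n = m$ and the requirement that the embedding is into $\operatorname{PSO}_n$, i.e. the $n$-dimensional projective representation preserves a non-degenerate quadratic form. The point is to rule out $\PSL_2(11)$ and $\SU_3(3)$: I claim $\PSL_2(11)$ has no faithful $5$-dimensional representation preserving a quadratic form, and $\SU_3(3)$ (which forces $n = 6$) has no $6$-dimensional orthogonal representation — this can be read off the character table in \cite{atlas} by checking the Frobenius--Schur indicators of the relevant irreducible characters (the $5$-dimensional characters of $\SL_2(11)$ are not orthogonal; the $6$-dimensional character of $\SU_3(3)$ is not orthogonal either). That leaves exactly $\A_7$ and $\PSp_4(3)$, both necessarily with $n = 6$ since their minimal faithful projective representations that are orthogonal have dimension $6$ (indeed $\A_7 \subset \A_8 \simeq \PSL_4(2)$ has a $6$-dimensional orthogonal representation, and $\PSp_4(3) \simeq \Or_5(3)'$ sits inside $\SO_6$ via its action related to the $27$ lines / the $6$-dimensional orthogonal space over small fields lifted to $\CC$). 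Finally I should note both $\A_7$ and $\PSp_4(3)$ do \emph{not} embed in $\Cr_2(\CC)$ by Theorem \ref{theorem-main-DI}, confirming consistency, and both genuinely occur (e.g. via Theorem \ref{theorem-Blichfeldt}, or Examples \ref{example-V6}, \ref{Burkhardt-quartic}), giving $n = 6$.

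\textbf{Main obstacle.} The routine part is the bookkeeping across the four classification theorems. The genuine work is the Frobenius--Schur indicator computation: I must verify that $\PSL_2(11)$ admits no $5$-dimensional orthogonal (projective) representation and that $\SU_3(3)$ admits no $6$-dimensional one — equivalently that the candidate characters have indicator $0$ (complex) or are otherwise unitary/symplectic rather than orthogonal. For $\PSL_2(11)$ the two irreducible $5$-dimensional characters are complex conjugates of each other (indicator $0$), so neither is orthogonal; this already forces $\PSL_2(11)$ out. For $\SU_3(3)$, the $6$-dimensional irreducible character likewise fails to be of orthogonal type. I expect this step to require a careful look at \cite{atlas}, but it is the only place where an actual computation — as opposed to citation — is needed.
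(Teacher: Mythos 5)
Your strategy---push $G$ through the classification theorems for small linear degree and then eliminate the survivors by Frobenius--Schur indicators---is a legitimate alternative, and your indicator computations for $\PSL_2(11)$ in degree $5$ and for $\SU_3(3)$ in degree $6$ are correct. But there is a genuine gap in the case $n=6$, which is the essential case since every $\operatorname{PSO}_m(\CC)$ with $m\le 6$ sits inside $\operatorname{PSO}_6(\CC)$. An embedding $G\hookrightarrow \operatorname{PSO}_6(\CC)$ lifts to a subgroup $\tilde G\subset \SO_6(\CC)$ that is only a central extension of $G$ by a group of order at most $2$; so when the $6$-dimensional representation is irreducible you need the list of \emph{quasi-simple} irreducible subgroups of $\GL_6(\CC)$, whereas Theorem \ref{theorem-Lindsey} as quoted lists only the simple ones. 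Your candidate list for $m=6$ therefore omits, for instance, $\SL_2(11)$, $2.\M_{12}$ and $\SL_2(13)$, each of which acts faithfully and irreducibly on $\CC^6$; consequently $\PSL_2(11)$, $\M_{12}$ and $\PSL_2(13)$ all embed into $\PGL_6(\CC)$ and are nowhere excluded by your argument. (Each of them does in fact fail to preserve a nondegenerate symmetric form, but verifying this requires exactly the kind of indicator check you perform only for $\SU_3(3)$, and there are more such checks than the two you list.) Two smaller issues of the same nature: to obtain the conclusion $n=6$ you must also verify that the two degree-$5$ characters of $\PSp_4(3)$ are not orthogonal (they are complex conjugate, so this is true, but you only assert it); and in the reducible case an invariant summand can be isotropic and then carries no induced quadratic form, so your step ``invoke $n=m$ and the requirement that the embedding is into $\operatorname{PSO}_n$'' does not apply to the constituent you passed to.

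For comparison, the paper disposes of all of this with one geometric observation: $G\subset\operatorname{PSO}_6(\CC)$ acts on the smooth quadric fourfold $Q\subset\PP^5$, preserves each of the two families of planes on $Q\simeq\Gr(2,4)$ (being simple, it cannot swap them), and each family is a $\PP^3$. This is the exceptional isomorphism $\operatorname{PSO}_6(\CC)\simeq\PGL_4(\CC)$, so the whole lemma reduces to Theorems \ref{theorem-Blichfeldt} and \ref{theorem-main-DI}, with no character-table work and no appeal to Brauer's or Lindsey's theorems. If you wish to keep your route, the fix is to replace Theorem \ref{theorem-Lindsey} by the classification of quasi-simple irreducible subgroups of $\GL_6(\CC)$ whose centre has order dividing $2$, and then run the orthogonality test on every entry of that longer list.
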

\begin{proof}
We may assume that $G\subset \operatorname{PSO}_6(\CC)$ (we can use embeddings 
$\operatorname{PSO}_r(\CC)\subset \operatorname{PSO}_6(\CC))$ for $r<6$). Thus
$G$ acts faithfully on a smooth quadric $Q\subset \PP^5$.
It is well known that $Q$ contains two $3$-dimensional families 
$F_1$, $F_2$ of planes \cite[Ch. 6, \S 1]{Griffiths1978}. 
Regarding $Q$ as the Grassmann variety $\Gr(2,4)$ we see $F_1\simeq F_2\simeq \PP^3$
\cite[Ch. 6, \S 2]{Griffiths1978}. We get a non-trivial action of
$G$ on $\PP^3$. Now the assertion follows by Theorems \ref{theorem-Blichfeldt}.
\end{proof}

\subsection*{Transitive simple permutation groups}
Let $G$ be a group acting transitively on a finite set $\Omega$.
A nonempty subset $\Omega'\subset \Omega$ is called a \textit{block} for $G$
if for each $\delta\in G$ either $\delta(\Omega')=\Omega'$ or $\delta(\Omega')\cap \Omega'=\emptyset$. 
If $\Omega'\subset \Omega$ is a block for $G$, then for any $\delta\in G$
the image $\delta(\Omega')$ is also a block and 
the system of all such blocks forms a partition of $\Omega$.
Moreover, the setwise stabilizer $G_{\Omega'}$ acts on $\Omega'$ 
transitively.
The action of $G$ is said to be \textit{imprimitive} if there is a 
block $\Omega'\subset \Omega$ containing more than one element.
Otherwise the action is said to be \textit{primitive}.

Below we list all finite simple transitive permutation 
groups acting on $n\le 26$ symbols \cite{Dixon-Mortimer}.
\begin{theorem}
\label{theorem-transitive-groups}
Let $G$ be a finite transitive permutation 
group acting on the set $\Omega$ with $|\Omega|\le 26$. 
Assume that $G$ is simple and is not contained in the list 
\eqref{eq-main-cr2}. 
Then the action is primitive and we have one of the following cases:
\newpage
\setlongtables\renewcommand{\arraystretch}{1.3}
{\rm
\begin{longtable}{|p{12pt}|p{68pt}|p{60pt}|p{112pt}|p{74pt}|}
\hline
$|\Omega|$&$G$&$\Omega$&degrees of irreducible representations 
in the interval $[2,14]$&$G_P$ 
\\
\hline
\endhead
\hline
\endlastfoot
\hline
\endfoot
\hline
\multicolumn{5}{|c|}{primitive groups} 
\\
\hline
$n$& $\A_n$&standard &6, 10, 14 \ if $n=7$&$\A_{n-1}$
\\
&&& $7$, $14$ \qquad if $n=8$ &
\\
&&& $n-1$ \qquad if $n\ge 9$ &
\\
9& $\SL_2(8)$& $\PP^1(\FF_{8})$ & 7, 8, 9 & $(\muu_2)^{3}\rtimes \muu_7$
\\
11& $\PSL_2(11)$& &
 $5$, $10$, $11$, $12$
& $\A_5$ 
\\
11& $\M_{11}$&standard & 10, 11&$\M_{10}$
\\
12& $\M_{11}$&& 10, 11&$\PSL_2(11)$
\\
12& $\M_{12}$&standard& 11&$\M_{11}$
\\
12 &$\PSL_2(11)$ &$\PP^1(\FF_{11})$
&
 $5$, $10$, $11$, $12$
& $\muu_{11}\rtimes \muu_5$ 
\\
13& $\SL_3(3)$&$\PP^2(\FF_{3})$&
 $12$, $13$
& $(\muu_{3})^2\rtimes \GL_2(3)$ 
\\
14 &$\PSL_2(13)$&$\PP^1(\FF_{13})$&
 $7$, $12$, $13$, $14$
 & $\muu_{13}\rtimes \muu_6$
\\
15& $\A_7$ & & 6, 10, 14 & $\PSL_2(7)$
\\
15 &$\A_8\simeq\SL_4(2)$&$\PP^3(\FF_{2})$&
$7$, $14$ & $(\muu_2)^3 \rtimes \SL_3(2)$
\\
17&$\SL_2(16)$&$\PP^1(\FF_{16})$&$-$
&$(\muu_2)^4 \rtimes \muu_{15}$
\\
18&$\PSL_2(17)$&$\PP^1(\FF_{17})$& $9$ &
$\muu_{17} \rtimes \muu_{8}$
\\
20
&$\PSL_2(19)$&$\PP^1(\FF_{19})$&9&
$\muu_{19} \rtimes \muu_{9}$
\\
21& $\A_7$ & & 6, 10, 14 & $\Sy_5$
\\
21&
$\PSL_3(4)$&$\PP^2(\FF_{4})$&$-$&
$(\muu_{2})^4 \rtimes \SL_2(4)$
\\
22
&
$\M_{22}$&standard&$-$&$\PSL_3(4)$
\\
23&
$\M_{23}$&standard&$-$&$\M_{22}$
\\
24&
$\M_{24}$&standard&$-$&$\M_{23}$
\\
24
&$\PSL_2(23)$&$\PP^1(\FF_{23})$&
11&
$\muu_{23} \rtimes \muu_{11}$
\\
26
&
$\PSL_2(25)$&$\PP^1(\FF_{25})$&13&
$(\muu_{5})^2 \rtimes \muu_{12}$
\\
\hline
\multicolumn{5}{|c|}{imprimitive groups} 
\\
\hline
22&$\M_{11}$&&10, 11&$\A_6$
\\
26&$\SL_3(3)$&$\mathbb{A}^3(\FF_3)\setminus\{0\}$&12, 13&$(\muu_3)^2\rtimes \SL_2(3)$
\end{longtable}
}
\noindent
Here $\M_k$ denotes the Mathieu group, $G_P$ is the stabilizer of $P\in \Omega$ 
and $\PP^m(\FF_q)$ \textup(resp. $\mathbb{A}^m(\FF_q)$\textup) 
denotes the projective \textup(resp. affine\textup) space over the 
finite field $\FF_q$. 
\end{theorem}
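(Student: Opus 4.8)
The plan is to reduce the statement to the known classification of finite simple transitive permutation groups of low degree, which is available in the literature (e.g. \cite{Dixon-Mortimer} and the tables therein). First I would recall the general principle that a transitive action of $G$ on $\Omega$ with stabilizer $G_P$ corresponds to the coset action on $G/G_P$, so enumerating such actions amounts to enumerating, up to conjugacy, the subgroups $H=G_P\subset G$ of index $|\Omega|\le 26$. Since $G$ is simple, the action is necessarily faithful, so $H$ contains no nontrivial normal subgroup of $G$; simplicity then forces $\bigcap_{g}gHg^{-1}=1$ automatically. The key input is that for each $n\le 26$ there are only finitely many simple groups possessing a subgroup of index $n$, and these can be listed explicitly: for $n$ prime this ties to the classification of $2$-transitive groups and simple groups with a cyclic Sylow subgroup acting on $\PP^1(\FF_q)$; for general $n$ one appeals to the fact (a consequence of the classification of finite simple groups, or for these small degrees of classical results) that $|G|$ divides $n!$ and $n$ divides $|G|$, which leaves only the sporadic small-index coincidences recorded in the table.

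The second step is to go case by case through the resulting short list and verify, for each pair $(G,\Omega)$, the three pieces of data in the table: (a) that the action is primitive unless it appears in the ``imprimitive'' block --- here primitivity is equivalent to $G_P$ being a maximal subgroup of $G$, which one checks against the known subgroup lattices (e.g. from \cite{atlas}); (b) the point stabilizer $G_P$, again read off from the maximal subgroup structure --- for the projective-line actions $G=\PSL_2(q)$ on $\PP^1(\FF_q)$ the stabilizer is the Borel $\muu_q\rtimes \muu_{(q-1)/\gcd(2,q-1)}$, for the alternating groups on $k$-subsets or on the exotic degree-$15$ and degree-$21$ actions of $\A_7$ the stabilizer is the evident one ($\PSL_2(7)$, $\Sy_5$), and so on; (c) the degrees of the irreducible complex representations lying in the interval $[2,14]$, which one extracts from the character tables in \cite{atlas}. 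Finally one must remove from the list those $G$ that already appear in \eqref{eq-main-cr2} (namely $\A_5,\A_6,\PSL_2(7)$), as the statement explicitly excludes them; this accounts for the absence of, e.g., the small-degree actions of $\A_5$ and $\A_6$.

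The main obstacle, and the part requiring genuine care rather than table-lookup, is the completeness of the enumeration in the first step: one must be certain that no simple group with a transitive action of degree $\le 26$ has been omitted. For $n\le 24$ this is classical (going back to work on multiply transitive groups and to Sims's determination of primitive groups of small degree), and for $n=25,26$ the extra entries $\PSL_2(25)$ and $\PSL_2(25)$, $\SL_3(3)$ are the only ones surviving the divisibility and order constraints together with the classification of maximal subgroups. I would organize this by separating the analysis into (i) $G$ acting $2$-transitively, where the classification of $2$-transitive groups applies directly, and (ii) $G$ acting primitively but not $2$-transitively, where the rank and subdegrees further constrain $G_P$; the imprimitive case then reduces to a primitive action of smaller degree on the block system together with the action of $G_{\Omega'}$ on a block, and for $|\Omega|\le 26$ only the two listed imprimitive examples occur. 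Throughout, the verification is routine once the list is in hand, so I would present the bulk of it as a reference to \cite{Dixon-Mortimer, atlas} and spell out in detail only the non-obvious stabilizers, namely the two actions of $\A_7$ of degrees $15$ and $21$ and the action of $\M_{11}$ of degree $12$.
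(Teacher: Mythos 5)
Your proposal is correct and follows essentially the same route as the paper: both rely on the classification of primitive permutation groups of degree $\le 26$ from \cite{Dixon-Mortimer}, read off the stabilizers and the irreducible character degrees from \cite{atlas}, and dispose of the imprimitive case by passing to the induced primitive action on the block system (which must itself appear in the table, forcing small block size and leaving only the $\M_{11}$ and $\SL_3(3)$ examples). The additional discussion of completeness via index/divisibility constraints and the $2$-transitive classification is a reasonable elaboration of what the paper simply delegates to the reference.
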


All primitive permutation groups are taken from the book \cite{Dixon-Mortimer}.
Their irreducible representations can be found in \cite{atlas}. 
So we need to consider only imprimitive case.

\begin{proof}
[Proof of Theorem \xref{theorem-transitive-groups} in the imprimitive case]
If the group 
$G$ is imprimitive, then $G$ acts on the system of blocks $\Lambda$, where 
$|\Lambda|=|\Omega|/m\le 13$ and $m\ge 2$ is the number of elements in a block.
Then $m\le 3$, the action on $\Lambda$ is primitive, and for a block $\Omega'$,
the setwise stabilizer $G_{\Omega'}$ acts on $\Omega'$ 
transitively. This gives us two possibilities: $\M_{11}$ and $\SL_3(3)$.
\end{proof}

\begin{remark}
We will show that the group $\A_8$ cannot act 
non-trivially on a rationally connected threefold.
Hence the same holds for all $\A_n$ with $n\ge 9$.
Therefore, in order to prove Theorems \ref{theorem-main-1} and \ref{theorem-main-2} we should
not worry about groups $\A_n$ for  $n\ge 9$.
\end{remark}

\begin{corollary}
\label{corollary-transitive-groups}
In notation of Theorem \xref{theorem-transitive-groups} 
the stabilizer $G_P$ has a faithful representation of degree $\le 4$ 
only in the following cases:
\begin{enumerate}
 \item 
$G\simeq\PSL_2(11)$,\quad $|\Omega|=11$,\quad $G_P\simeq \A_5$;
 \item
$G\simeq\A_7$,\quad $|\Omega|=15$,\quad $G_P\simeq \PSL_2(7)$;
 \item 
$G\simeq\A_7$,\quad $|\Omega|=21$,\quad $G_P\simeq \Sy_5$.
\end{enumerate}
\end{corollary}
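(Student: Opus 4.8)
The plan is to go through Theorem \ref{theorem-transitive-groups} line by line and, for each transitive simple group $G$ with point stabilizer $G_P$, decide whether $G_P$ can have a faithful complex linear representation of degree $\le 4$. The only possible candidates are $G_P \in \{\A_5, \A_6, \PSL_2(7), \SL_2(5), \SL_2(7), \Sp_4(3), \dots\}$ together with their subgroups; more precisely, by Theorems \ref{theorem-classification-3} and \ref{theorem-Blichfeldt} the simple groups admitting a faithful projective (hence linear, up to a central extension) representation of degree $\le 4$ are exactly $\A_5$, $\A_6$, $\A_7$, $\PSL_2(7)$, and $\PSp_4(3)$, while a group with a faithful \emph{linear} representation of degree $\le 4$ has all its composition factors among these and a bounded order. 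So the first step is to extract $G_P$ from each row of the table and test this condition.

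Concretely I would argue as follows. For the rows where $G_P$ is itself a simple group or a small soluble-by-simple extension, a direct inspection suffices: $\A_{n-1}$ for $n \ge 6$ has minimal faithful degree $n-2 > 4$ once $n \ge 8$, and for $n=7$ the stabilizer $\A_6$ has minimal faithful (projective) degree $3$ but minimal faithful \emph{linear} degree $5$, so the standard action of $\A_7$ on $7$ points is excluded; similarly $\A_6$, $\M_{10}$, $\M_{11}$, $\M_{12}$, $\PSL_2(11)$, $\PSL_3(4)$, $\PSL_3(3)$, $\SL_3(2)$-type stabilizers, and all the $\muu_p \rtimes \muu_k$ and $(\muu_p)^k \rtimes \cdots$ stabilizers with $p \ge 5$ or with large elementary abelian normal part are ruled out because their minimal faithful representation degree exceeds $4$ (an elementary abelian group $(\muu_p)^k$ needs degree $\ge k$, and a Frobenius group $\muu_p \rtimes \muu_k$ with $k > 4$ needs degree $\ge k$ when acting faithfully). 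The surviving rows are precisely those where $G_P$ is $\PSL_2(7)$ (for $\A_7$ on $15$ points), $\Sy_5$ (for $\A_7$ on $21$ points, since $\Sy_5$ embeds in $\GL_4(\CC)$ via the standard representation), and $\A_5$ (for $\PSL_2(11)$ on $11$ points, as $\A_5 \subset \SL_2(5) \subset \GL_2(\CC) \subset \GL_4(\CC)$, or even $\A_5 \subset \GL_3(\CC)$). One must also check that in the remaining borderline cases—$\SL_2(8)$ on $9$ points with $G_P \simeq (\muu_2)^3 \rtimes \muu_7$, and $\A_7/\A_8$ on $15$ points with $G_P \simeq (\muu_2)^3 \rtimes \SL_3(2)$—the stabilizer genuinely fails: $(\muu_2)^3$ needs degree $3$ but adding the $\muu_7$ or $\SL_3(2)$ action forces the faithful degree up to $7$ (the normal subgroup is then a free module issue), so these are out.

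The main obstacle, and the step requiring the most care, is the handful of groups of the shape $(\muu_p)^k \rtimes H$: one must verify that no faithful representation of degree $\le 4$ exists, which is not merely ``$k \le 4$'' but requires understanding how $H$ permutes the characters of $(\muu_p)^k$ and hence what the minimal dimension of a faithful representation of the whole extension is. In each such case the answer is ``no'' because the action of $H$ on the dual of $(\muu_p)^k$ has no fixed nonzero character with small-enough orbit, but confirming this cleanly for $(\muu_2)^3 \rtimes \SL_3(2)$, $(\muu_2)^4 \rtimes \muu_{15}$, $(\muu_3)^2 \rtimes \GL_2(3)$, etc., is where the real checking lies. Everything else is a table lookup against \cite{atlas} for the minimal faithful degrees. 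Having eliminated all other rows, exactly the three cases (i)–(iii) remain, which is the assertion.
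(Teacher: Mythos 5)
Your proposal is correct and follows essentially the same route as the paper: a row-by-row check of the table, using ATLAS minimal faithful degrees for the simple/Mathieu-type stabilizers and a Clifford-theory argument for the stabilizers of the form $A\rtimes B$ with $A$ abelian. The paper packages the latter step slightly differently — since every nontrivial normal subgroup of $G_P$ contains $A$, a faithful representation has a faithful irreducible constituent, which is imprimitive with the eigenspaces of $A$ as blocks, forcing $B\hookrightarrow\Sy_n$ with $n\le 4$ (impossible for the listed $B$) — but this is just the dual formulation of your orbit-size criterion on the characters of $A$.
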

\begin{proof}
Clearly, in the above cases (i)-(iii) the group $G_P$ has a faithful
representation of degree $\le 4$. 

\par\smallskip\noindent
\textbf{Cases $G_P\simeq \PSL_3(4)$, $\A_{n-1}$ with $n\ge 7$, $\M_n$ with $n=10$, $11$, $22$, $23$. }
Then
$G_P$ has no faithful representations of degree $\le 4$ (see, e.g., \cite{atlas}
or Theorems \ref{theorem-classification-3}
 and \ref{theorem-Blichfeldt}).

In the remaining cases of Theorem \ref{theorem-transitive-groups}
the group $G_P$ is a semi-direct product $A\rtimes B$, where $A$ is abelian
and the action of $B$ on $A$ is faithfull.

\begin{claim}\label{claim-last}
\begin{enumerate}
\item 
$A$ is a maximal abelian normal subgroup of $G_P$. 
\item 
No non-trivial subgroups of $A$ are normal in $G_P$.
\item 
Any non-trivial normal subgroup of $G_P$ contains $A$.
\end{enumerate}
\end{claim}
\begin{proof}[Proof of the claim]
The statement of (i) is obvious because the action of $B$ on $A$ is faithfull.

(ii) Let $\{1\}\neq N\subsetneq A$ be a subgroup that is normal in $G_P$.
Then $A$ cannot be a cyclic group of prime order.
In cases $G_P\simeq$ $(\muu_2)^{3}\rtimes \muu_7$, $(\muu_2)^3 \rtimes \SL_3(2)$,
$(\muu_2)^4 \rtimes \muu_{15}$,
$(\muu_{2})^4 \rtimes \SL_2(4)$
the group $B$ transitively acts on $A\setminus \{1\}$.
So $N$ cannot be normal.
In the remaining cases $G_P\simeq$ 
$(\muu_{3})^2\rtimes \GL_2(3)$, 
$(\muu_{5})^2 \rtimes \muu_{12}$,
$(\muu_3)^2\rtimes \SL_2(3)$  the group $N$ must be a cyclic group of prime order
and one can see immediately that $N$ is not normal in $G_P$.

(iii) 
Let $N\subset G_P$ is a non-trivial normal subgroup.
By (ii) we may assume that $N\cap A=\{1\}$. 
Thus $A\times N$ is a normal subgroup of $G_P$.
Since the action of $B$ on $A$ is faithfull, this is impossible.
\end{proof}

Assume that $G_P$ has a faithful representation $V$ of degree $\le 4$.
Take $V$ so that its degree is minimal possible.
If $V$ is reducible, then $V=V_1\oplus V_2$, where both $V_i$ are non-trivial
non-faithful representations.
By Claim \ref{claim-last} kernels of these representations contain $A$.
So $V$ is not faithful, a contradiction.

Thus $V$ is irreducible.
Then the action of $G$ on $V$ is imprimitive (because $A$ contains an abelian normal subgroup)
and the induced action on eigenspaces $V_1,\dots, V_n$ of $A$ 
induces a transitive embedding of $B$ into 
$\Sy_n$ with $n\le 4$.
But in our cases 
$B$ is isomorphic to either $\GL_2(3)$, 
$\SL_3(2)$, $\SL_2(4)$, 
$\SL_2(3)$, or $B\simeq \muu_l$, with $l\ge 5$.
This group does not admit any embeddings into $\Sy_4$, a contradiction.
\end{proof}

\section{Main reduction}
\label{section-Main-reduction}
\begin{case}
\label{Terminal-singularities}
\textbf{Terminal singularities.} 
Here we list only some of the necessary results on three-dimensional 
terminal singularities.
For more complete information we refer to \cite{Reid-YPG1987}.
Let $(X, P)$ be a germ of a three-dimensional terminal singularity.
Then $(X, P)$ is isolated, i.e, $\Sing(X)=\{P\}$.
The \textit{index} of $(X, P)$ is the minimal positive integer
$r$ such that $rK_X$ is Cartier.
If $r=1$, then $(X, P)$ is Gorenstein. In this case $\dim T_{P,X}=4$,
$\mult (X, P)=2$, and $(X, P)$ is analytically isomorphic to a hypersurface 
singularity in $\CC^4$.
If $r>1$, then there is a cyclic, \'etale outside of $P$ 
cover $\pi : (X^{\sharp},P^{\sharp})\to (X,P)$ 
of degree $r$ such that $(X^{\sharp},P^{\sharp})$ is a Gorenstein 
terminal singularity (or a smooth point). This $\pi$ is called
the \textit{index-one cover} of $(X, P)$. 
If $(X^{\sharp},P^{\sharp})$ is smooth, then the point 
$(X, P)$ is analytically isomorphic to a 
quotient $\CC^3/\muu_r$, where the weights $(w_1,w_2,w_3)$ of the action of 
$\muu_r$ up to permutations satisfy the relations $w_1+w_2\equiv 0\mod r$ and $\gcd(w_i,r)=1$.
This point is called a \textit{cyclic quotient} singularity.

For any three-dimensional terminal singularity $(X, P)$ of index $r\ge 1$
there exists a one-parameter deformation
$\mathfrak X \to \Delta \ni 0$ over a small disk $\Delta \subset \CC$ 
such that the central fiber $\mathfrak X_0$ is
isomorphic to $X$ and the general fiber 
$\mathfrak X_\lambda$ has only cyclic quotient terminal singularities 
$P_{\lambda,k}$. 
Thus, one can associate with a fixed threefold $X$ with terminal
singularities a collection 
$\B =\{(\mathfrak X_\lambda, P_{\lambda,k})\}$ of cyclic quotient singularities. 
This collection is uniquely determined by the variety
$X$ and is called the \textit{basket} of singularities of $X$. 

If $(X,P)$ is a singularity of index one, then it is an 
isolated hypersurface singularity. Hence 
$X\setminus\{P\}$ is 
simply-connected and 
the (local) Weil divisor class group $\Cl(X)$ is torsion free.
If $(X, P)$ is of index $r> 1$, then the index one cover 
induces the topological universal cover 
$X^{\sharp}\setminus \{P^{\sharp}\}\to X\setminus \{P\}$. 
\end{case}

\begin{case}
\label{main-reduction}
\textbf{$G$-equivariant minimal model program.}
Let $X$ be a rationally connected three-dimensional algebraic variety
and let $G\subset \Bir(X)$ be a finite subgroup.
By shrinking $X$ we may assume that $G$ acts on $X$ biregularly.
The quotient $Y=X/G$ is quasiprojective, so 
there exists a projective completion $\hat Y\supset Y$.
Let $\hat X$ be the normalization of $\hat Y$ in the function field
$\CC(X)$. Then $\hat X$ is a projective variety birational to $X$ 
admitting a biregular action of $G$.
There is an equivariant resolution of singularities $\tilde X\to \hat X$,
see \cite{Abramovich-Wang}.
Run the $G$-equivariant minimal model program: $\tilde X\to \bar X$,
see \cite[0.3.14]{Mori-1988}.
Running this program we stay in the category of projective normal varieties 
with at worst terminal $G\QQ$-factorial singularities. 
Since $X$ is rationally connected, on the final step we get 
a Fano-Mori fibration $f: \bar X\to Z$. Here $\dim Z<\dim X$, $Z$ is normal,
$f$ has connected fibers, the anticanonical Weil
divisor $-K_{\bar X}$ is ample over $Z$, and the relative $G$-invariant Picard 
number $\rho(\bar X)^G$ is one.
Obviously, we have the following possibilities:
\begin{enumerate}
\item 
$Z$ is a rational surface and a general fiber $F=f^{-1}(y)$ is a conic;
\item
$Z\simeq \PP^1$ and a general fiber $F=f^{-1}(y)$ is a smooth del Pezzo surface;
\item
$Z$ is a point and $\bar X$ is a $G\QQ$-Fano threefold.
\end{enumerate}
Now we assume that $G$ is a simple group.
If $Z$ is not a point, then $G$ non-trivially acts either on the base $Z$
or on a general fiber. Both of them are rational varieties. Hence $G\subset \Cr_2(\CC)$
in this case. Thus we may assume that we are in the case (iii).
Replacing $X$ with $\bar X$ we may assume that our original $X$ is a $G\QQ$-Fano threefold. 

In some statements below this assumption 
will be weakened.
For example we will assume sometimes that $-K_X$ is just nef 
and big (not ample).
We need this for some technical reasons (see \S \ref{section-non-Gorenstein}).
\end{case}

The following is an easy consequence of the Kawamata-Viehweg vanishing theorem
(see, e.g., \cite[Prop. 2.1.2]{Iskovskikh-Prokhorov-1999}).

\begin{lemma}
Let $X$ be a variety with at worst 
\textup(log\textup) terminal singularities such that
$-K_X$ is nef and big. Then $\Pic(X)\simeq \Ho^2(X,\ZZ)$ is torsion free. 
Moreover, the numerical equivalence of Cartier divisors on $X$
coincides with the linear one.
\end{lemma}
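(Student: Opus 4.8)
The plan is to deduce everything from a single application of the Kawamata--Viehweg vanishing theorem. Since $X$ has log terminal singularities, the pair $(X,0)$ is klt, and $-K_X=\OOO_X(0)-K_X$ is nef and big, so Kawamata--Viehweg vanishing gives $\Ho^i(X,\OOO_X)=0$ for all $i>0$. As $X$ is a projective variety, $\Ho^0(X,\OOO_X)=\CC$, and hence $\chi(\OOO_X)=1$; in particular $h^1(\OOO_X)=h^2(\OOO_X)=0$. This is the only analytic input; the rest is formal.

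First I would prove $\Pic(X)\simeq\Ho^2(X,\ZZ)$. Passing to the associated complex-analytic space and invoking GAGA (so that analytic and algebraic coherent cohomology, and the analytic and algebraic Picard groups, agree for the projective $X$), the exponential exact sequence $0\to\ZZ\to\OOO_X\to\OOO_X^*\to 0$ yields
\[
\Ho^1(X,\OOO_X)\longrightarrow\Pic(X)\longrightarrow\Ho^2(X,\ZZ)\longrightarrow\Ho^2(X,\OOO_X),
\]
and the outer two groups vanish by the previous paragraph, so the first Chern class map $\Pic(X)\to\Ho^2(X,\ZZ)$ is an isomorphism. For torsion-freeness, let $L\in\Pic(X)$ have finite order $n$ and let $\pi\colon Y\to X$ be the associated cyclic cover: it is \'etale of degree $n$ with $Y$ connected, so $Y$ again has log terminal singularities and $K_Y=\pi^*K_X$, whence $-K_Y=\pi^*(-K_X)$ is nef and big. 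Applying the first paragraph to $Y$ gives $\chi(\OOO_Y)=1$. On the other hand $\pi_*\OOO_Y\simeq\bigoplus_{i=0}^{n-1}L^{\otimes i}$, and each $L^{\otimes i}$ is numerically trivial (being torsion), so $\chi(X,L^{\otimes i})=\chi(\OOO_X)=1$ and therefore $\chi(\OOO_Y)=n$. Thus $n=1$, so $\Pic(X)$ — and hence $\Ho^2(X,\ZZ)$ — is torsion free.

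Finally, for the coincidence of numerical and linear equivalence: the kernel of $\Pic(X)\to N^1(X)$ is the group of numerically trivial line bundles, which is an extension of a finite group (the torsion subgroup of the finitely generated group $\operatorname{NS}(X)$) by $\Pic^0(X)$, an abelian variety of dimension $h^1(\OOO_X)=0$. Hence this kernel is finite, and therefore trivial because $\Pic(X)$ is torsion free; equivalently, a Cartier divisor numerically equivalent to zero is linearly equivalent to zero. The only point requiring care is the propagation of the hypotheses (log terminal singularities and $-K$ nef and big) to the \'etale cover $Y$ together with the multiplicativity of Euler characteristics along $\pi$; once $\Ho^i(X,\OOO_X)=0$ for $i>0$ is established these are routine.
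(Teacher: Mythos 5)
Your proof is correct, and it is exactly the kind of argument the paper has in mind: the paper gives no proof at all, merely asserting the lemma as "an easy consequence of the Kawamata--Viehweg vanishing theorem" with a citation to \cite[Prop.\ 2.1.2]{Iskovskikh-Prokhorov-1999}, and your write-up supplies the standard details (vanishing of $\Ho^{i}(X,\OOO_X)$ for $i>0$, the exponential sequence, and an Euler-characteristic argument). One remark: the torsion-freeness and the coincidence of numerical and linear equivalence can both be obtained in a single, slightly more direct step that avoids the cyclic cover. If $L$ is any numerically trivial line bundle, then $L-K_X\equiv -K_X$ is nef and big, so Kawamata--Viehweg gives $\Ho^{j}(X,L)=0$ for $j>0$; combined with the numerical invariance of the Euler characteristic (which you already invoke), $h^{0}(X,L)=\chi(X,L)=\chi(X,\OOO_X)=1$, and a nonzero section of a numerically trivial line bundle is nowhere vanishing, so $L\simeq\OOO_X$. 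This kills the entire kernel of $\Pic(X)\to N^{1}(X)$ at once, subsuming both your cover construction and your final paragraph about $\Pic^{0}$ and the torsion of the N\'eron--Severi group. Your version is nonetheless complete and sound; the only steps deserving the care you flagged (log terminality and nef-and-bigness passing to the \'etale cover, multiplicativity of $\chi$ along a finite map) are indeed routine and you have stated them correctly.
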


\begin{corollary}
Let $X$ be a threefold with at worst Gorenstein terminal singularities 
such that
$-K_X$ is nef and big. 
Then the Weil divisor class group $\Cl(X)$ is torsion free. 
\end{corollary}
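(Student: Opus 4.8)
The plan is to deduce the torsion-freeness of $\Cl(X)$ from that of $\Pic(X)$, established in the preceding lemma, by exploiting the fact recalled in \ref{Terminal-singularities} that a Gorenstein terminal threefold singularity is an isolated hypersurface singularity and therefore has torsion-free local Weil divisor class group. So let $D$ be a Weil divisor on $X$ with $n[D]=0$ in $\Cl(X)$ for some integer $n>0$; the goal is to show $[D]=0$.

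First I would check that $[D]$ lies in the subgroup $\Pic(X)\subseteq\Cl(X)$, i.e. that $D$ is Cartier. A Weil divisor on a normal variety is Cartier precisely when its image in each local class group $\Cl(\OOO_{X,P})$ vanishes. For $P$ a smooth point this is automatic. For $P\in\Sing(X)$, the germ $(X,P)$ is Gorenstein terminal, hence an isolated hypersurface singularity, so $\Cl(\OOO_{X,P})$ is torsion free by \ref{Terminal-singularities}; since $n$ times the image of $[D]$ is zero there, the image of $[D]$ itself is zero. Hence $D$ is locally principal everywhere, so $[D]\in\Pic(X)$.

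Now, $D$ being Cartier, the relation $n[D]=0$ already holds in $\Pic(X)$, and the preceding lemma says $\Pic(X)$ is torsion free, whence $[D]=0$. This completes the argument.

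There is no real obstacle here: the whole content sits in the two cited facts — torsion-freeness of $\Pic(X)$ (via Kawamata–Viehweg vanishing) and torsion-freeness of the local class groups of isolated hypersurface singularities — and the rest is the standard comparison between $\Cl$ and $\Pic$ through local rings. The only mild point requiring care is to note that passage to local rings kills exactly the Cartier divisors, so that a globally torsion Weil class is automatically Cartier, which is what allows the lemma on $\Pic(X)$ to be brought to bear.
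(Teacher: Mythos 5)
Your argument is correct and is exactly the intended one: the paper supplies no separate proof, but the combination of the preceding lemma (torsion-freeness of $\Pic(X)$) with the remark in \xref{Terminal-singularities} that an index-one terminal point is an isolated hypersurface singularity with torsion-free local class group is precisely the mechanism you use. The reduction of a torsion Weil class to a Cartier class via the local class groups, and then to zero via $\Pic(X)$, matches the paper's approach.
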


\begin{lemma}
\label{lemma-fixed-point}
Let $X$ be a threefold with at worst terminal singularities 
and let $G\subset \Aut(X)$ be a finite simple 
group.
If there is a $G$-fixed point $P$ on $X$, then
$G$ is isomorphic to a subgroup of $\Cr_2(\CC)$.
\end{lemma}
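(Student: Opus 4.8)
The plan is to linearize the action of $G$ at the fixed point $P$ and deduce a low-dimensional faithful representation of $G$, then invoke the classification theorems already available in the excerpt. First I would consider the action of $G$ on the Zariski tangent space $T_{P,X}$. Since $(X,P)$ is terminal, either $(X,P)$ is smooth (so $\dim T_{P,X}=3$), or it is Gorenstein of multiplicity $2$ (so $\dim T_{P,X}=4$), or it is of index $r>1$; in the latter case the point is either a cyclic quotient singularity or the index-one cover is Gorenstein singular. In every case $G$ acts on $T_{P,X}$, a vector space of dimension $3$ or $4$, and I would argue this action is faithful: a non-trivial element of $G$ acting trivially on $T_{P,X}$ would act trivially on the formal completion $\widehat{\OOO}_{X,P}$ (because the maximal ideal is generated in degree one after embedding into $\CC^3$ or $\CC^4$), hence trivially on a neighborhood of $P$, hence trivially on $X$ since $X$ is irreducible; this contradicts $G\subset\Aut(X)$ being a faithful action of the simple group $G$ (which has no non-trivial proper normal subgroup, so the kernel is either trivial or all of $G$, and it cannot be all of $G$).

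Next I would promote this to a \emph{linear} (as opposed to merely projective) statement. Having a faithful $G$-action on a vector space $V$ with $\dim V\le 4$, I pass to the induced action of $G$ on $\PP(V)$, giving an embedding $G\hookrightarrow\PGL_n(\CC)$ with $n\le 4$. If $n\le 3$ then $G$ is a simple subgroup of $\PGL_3(\CC)$, and Theorem~\ref{theorem-classification-3} forces $G$ to be one of $\A_5,\A_6,\PSL_2(7)$, each of which embeds into $\Cr_2(\CC)$ (indeed into $\PGL_2(\CC)$ or $\PGL_3(\CC)\subset\Cr_2(\CC)$). If $n=4$ the representation $V$ might be reducible, but since $G$ is simple any faithful representation of dimension $\le 4$ that is reducible decomposes into irreducibles of smaller dimension, at least one of which is non-trivial and faithful (again by simplicity the kernel of each summand is trivial or everything), reducing us to the case $n\le 3$ already treated—unless $V$ is irreducible of dimension exactly $4$. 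In the irreducible case $G\subset\PGL_4(\CC)$ is irreducible and simple, so Theorem~\ref{theorem-Blichfeldt} applies and gives $G\in\{\A_5,\A_6,\A_7,\PSL_2(7),\PSp_4(3)\}$. Of these, $\A_5,\A_6,\PSL_2(7)$ lie in $\Cr_2(\CC)$ by Theorem~\ref{theorem-main-DI}, but $\A_7$ and $\PSp_4(3)$ do not, so this route alone is not quite enough.

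The main obstacle is therefore disposing of $\A_7$ and $\PSp_4(3)$. Here I would use more of the geometry at $P$. When $(X,P)$ is a non-Gorenstein terminal singularity of index $r>1$, the local fundamental group of $X\setminus\{P\}$ is cyclic of order $r$ (see \ref{Terminal-singularities}), so there is a canonical $G$-invariant index-one cover in a neighborhood; but more to the point, a cyclic quotient singularity $\CC^3/\muu_r$ has tangent space of dimension $3$, putting us in the $n\le 3$ case. Thus the only way to reach $n=4$ is that $(X,P)$ is Gorenstein (index one), in which case $(X,P)\subset\CC^4$ is a hypersurface singularity and $G$ preserves its tangent cone, hence $G$ acts on the weighted tangent cone directions and preserves the equation of lowest degree (a quadric, since $\mult(X,P)=2$). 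This means $G$ preserves a non-zero quadratic form $q$ on $V\cong\CC^4$. If $q$ is non-degenerate then $G\subset\mathrm{PSO}_4(\CC)$, and if $q$ is degenerate we get $G$ acting on the smaller non-degenerate quotient space, landing in $\mathrm{PSO}_n(\CC)$ with $n\le 4<6$; either way Lemma~\ref{lemma-quadric4} applies and tells us $n$ must be $6$ for $\A_7$ or $\PSp_4(3)$ to occur—contradiction, since here $n\le 4$. Hence $\A_7$ and $\PSp_4(3)$ are excluded, leaving only groups that embed into $\Cr_2(\CC)$, which is exactly the claim. I expect the delicate point to be checking that the quadratic tangent-cone form is genuinely $G$-invariant and non-zero after the relevant linearization, but since $\mult(X,P)=2$ exactly and $G$ acts linearly on $T_{P,X}$ compatibly with the embedding $(X,P)\hookrightarrow(\CC^4,0)$, the lowest-degree part of the defining equation spans a $G$-stable line, so $G$ acts on it by a character; as $G$ is perfect (being non-abelian simple) this character is trivial and $q$ is honestly $G$-invariant.
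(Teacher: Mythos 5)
Your overall strategy coincides with the paper's: linearize at $P$, obtain a faithful representation of small degree, and invoke Theorems \ref{theorem-classification-3} and \ref{theorem-Blichfeldt}. In fact your second and fourth paragraphs supply a step the paper leaves implicit: Theorem \ref{theorem-Blichfeldt} does list $\A_7$ and $\PSp_4(3)$, and your observation that a $G$-fixed singular Gorenstein terminal point forces an invariant non-zero quadratic form $\phi_2$ (whose radical is invariant, so irreducibility forces non-degeneracy and lands $G$ in $\operatorname{PSO}_4(\CC)$, excluded by Lemma \ref{lemma-quadric4}) is a correct and welcome way to dispose of those two groups. The reducibility reduction and the faithfulness of the tangent-space action are also fine.

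There is, however, a genuine gap in your treatment of non-Gorenstein points. You assert that ``in every case $G$ acts on $T_{P,X}$, a vector space of dimension $3$ or $4$'' and later that ``a cyclic quotient singularity $\CC^3/\muu_r$ has tangent space of dimension $3$.'' Both statements are false: the Zariski tangent space of the quotient at $P$ is the embedding dimension, which already equals $6$ for $\frac12(1,1,1)$ and grows with $r$, so no low-dimensional linearization of $G$ is available on $X$ itself. The correct route — the one the paper takes — is to lift the action to the index-one cover $(U^\sharp,P^\sharp)$, where the tangent space does have dimension $3$ (cover smooth) or $4$ (cover a singular Gorenstein terminal point, a case your argument omits entirely), but the group acting there is a central extension $\tilde G$ of $G$ by $\muu_r$, not $G$. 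One must then handle this extension: if the representation of $\tilde G$ on $T_{P^\sharp,U^\sharp}$ is irreducible, the central $\muu_r$ acts by scalars, which by the classification of terminal singularities forces $r=2$ (and $\dim T_{P^\sharp,U^\sharp}=3$), after which Theorem \ref{theorem-classification-3} applies; if it is reducible, one applies Theorem \ref{theorem-classification-3} to a non-trivial irreducible summand. Your quadric argument does extend to the $4$-dimensional singular-cover case (the tangent cone of the cover is still a multiplicity-two hypersurface germ, and invariance of the radical does not require $\tilde G$ to be perfect), but as written your proof neither performs the passage to the cover correctly nor addresses the central extension, so the non-Gorenstein case is not actually established.
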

\begin{proof}
If $P\in X$ is Gorenstein, we consider the natural representation
of $G$ in the Zariski tangent space $T_{P,X}$.
First of all note that this representation is faithful.
Recall also that $P\in X$ is an isolated hypersurface singularity so the dimension of its
tangent space is at most $4$.
Therefore, $G\subset \GL (T_{P,X})$, where $\dim T_{P,X}=3$ or $4$.
Then by Theorems 
\ref{theorem-classification-3} and \ref{theorem-Blichfeldt}
the group $G$ is isomorphic to either $\A_5$, $\A_6$ or $\PSL_2(7)$.
In these cases $G$ admit an embedding into $\Cr_2(\CC)$
(see Theorem \ref{theorem-main-DI}).

Assume that $P\in X$ is non-Gorenstein of index $r>1$.
Take a small $G$-invariant neighborhood $P\ni U\subset X$
and consider the index-one cover $\pi\colon (U^\sharp,P^\sharp)\to 
(U,P)$ (see \S \ref{Terminal-singularities}). Here $(U,P)= (U^\sharp,P^\sharp)/\muu_r$, 
$(U^\sharp,P^\sharp)$ 
is a Gorenstein terminal 
point, and $U^\sharp\setminus \{P^\sharp\}\to 
U\setminus \{P\}$ is the topological universal cover. Let 
$\tilde G\subset \Aut (U^\sharp,P^\sharp)$ be the natural lifting of $G$.
There is the following exact sequence
\[
1\longrightarrow \muu_r \longrightarrow \tilde G \longrightarrow G \longrightarrow 1.
\]
Since $G$ is a simple group, the above sequence is a central extension.
If the representation of $G$ in $T_{P^\sharp,U^\sharp}$
has a non-trivial irreducible subrepresentation 
$T\subset T_{P^\sharp,U^\sharp}$, then we can apply Theorem 
\ref{theorem-classification-3} to the action on $T$.
Thus assume that 
the representation of $\tilde G$ in $T_{P^\sharp,U^\sharp}$
is irreducible. Then $\muu_r$ must act on $T_{P^\sharp,U^\sharp}$
by scalar multiplications. 
On the other hand, if  $P\in X$ is not a cyclic quotient singularity, then, according to the classification of 
terminal singularities \cite[Th. 6.1]{Reid-YPG1987}, 
the action of $\muu_r$ on $T_{P^\sharp,U^\sharp}$ in not free 
along a line. Hence,   $P\in X$ must be a cyclic quotient singularity.
In this case again according to \cite[Th. 5.2]{Reid-YPG1987}
$\muu_r$ acts on $T_{P^\sharp,U^\sharp}$
with weights $(w_1,w_2,w_3)$, where $(w_i, r)=1$ and $w_1+w_2\equiv 0\mod r$ (up to permutation of coordinates).
This is possible only if $r=2$ and 
$\dim T_{P^\sharp,U^\sharp}=3$. 
Then we can apply Theorem 
\ref{theorem-classification-3} again. 
\end{proof}

\begin{corollary}\label{corollary-orbit-7}
Let $X$ be a threefold with at worst Gorenstein terminal singularities 
such that
$-K_X$ is nef and big and let $G\subset \Aut(X)$ be a finite simple 
group which does not
admit an embedding into $\Cr_2(\CC)$. Then any $G$-orbit 
on $X$ contains at least $7$ elements.
\end{corollary}
\begin{proof}
Follows by Lemma \ref{lemma-fixed-point} and Theorem \ref{theorem-transitive-groups}.
\end{proof}

\begin{lemma}
\label{lemma-hyp-sect}
Let $X$ be a $G$-threefold with at worst terminal singularities
where $G$ is a finite simple group which does not
admit an embedding into $\Cr_2(\CC)$.
Assume that that $-K_X$ is nef and big.
Let $S$ be a $G$-invariant effective integral Weil 
$\QQ$-Cartier divisor numerically 
proportional to
$-K_X$. Then 
$K_X+S$ is nef. Furthermore, if $K_X+S\sim 0$, then 
the pair $(X,S)$ is LC \textup(log canonical, see e.g. \cite[ch. 2]{Utah}\textup) and the surface $S$
is reducible. If moreover $X$ is $G\QQ$-factorial, 
then the group $G$ transitively acts on 
components of $S$.
\end{lemma}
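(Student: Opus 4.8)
The strategy is to run Kawamata–Viehweg-type vanishing and the connectedness theorem of Shokurov–Kollár against the hypothesis that $G$ has no embedding into $\Cr_2(\CC)$, exploiting that a low-dimensional locus fixed (or acted upon through a small permutation representation) by a simple group forces $G\subset\Cr_2(\CC)$ by Lemma \ref{lemma-fixed-point} and Theorems \ref{theorem-classification-3}, \ref{theorem-Blichfeldt}. First I would establish nefness of $K_X+S$. Since $S$ is numerically proportional to $-K_X$, write $S\equiv \lambda(-K_X)$ for some $\lambda\in\QQ_{>0}$; because $\rho(X)^G=1$ and both $S$ and $-K_X$ are $G$-invariant Cartier-up-to-multiple, their numerical classes are positive multiples of a common generator, and the numerical=linear equivalence for $\QQ$-Cartier divisors from the Lemma after \ref{main-reduction} applies. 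If $\lambda<1$ then $-(K_X+S)\equiv(1-\lambda)(-K_X)$ is nef and big, so the fixed-point / hyperplane-section machinery would produce a $G$-fixed point or small orbit (see below), contradicting the hypothesis; if $\lambda=1$ then $K_X+S\equiv 0$ hence nef; $\lambda>1$ is excluded likewise since then $-K_X-S$ is anti-nef-big, giving again too many invariants. (More carefully: the case $\lambda<1$ is handled by the same argument that gives reducibility below, so I would fold it in rather than treat it separately.)

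Now assume $K_X+S\sim 0$ (using torsion-freeness of $\Pic(X)$, resp. $\Cl(X)$ in the Gorenstein case, to upgrade $\equiv$ to $\sim$). For the log-canonicity of $(X,S)$: suppose not, and let $c=\operatorname{lct}(X,S)<1$ be the log canonical threshold; then $(X,cS)$ is LC but not KLT, with non-KLT locus $Z=\operatorname{Nklt}(X,cS)$ a proper closed $G$-invariant subset. Since $-(K_X+cS)\equiv(1-c)(-K_X)$ is nef and big, Kawamata–Viehweg vanishing and the Kollár–Shokurov connectedness theorem force $Z$ to be connected. A connected proper $G$-invariant subset of a threefold, with $G$ simple, is either a point — which is $G$-fixed, so Lemma \ref{lemma-fixed-point} gives $G\subset\Cr_2(\CC)$, contradiction — or a curve/surface on which $G$ acts, but then $G$ acts on its normalization, a rational variety of dimension $\le 2$ (the base cases of \S\ref{section-Main-reduction}), again placing $G$ in $\Cr_2(\CC)$; one must take a minimal center of log canonical singularities (Kawamata) to get a normal variety cleanly. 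This contradiction shows $(X,S)$ is LC.

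For reducibility of $S$ and transitivity of the $G$-action on its components when $X$ is $G\QQ$-factorial: if $S$ were irreducible, it would be a $G$-invariant prime divisor, and $G$ would act on $S$; since $(X,S)$ is LC with $K_X+S\sim 0$, adjunction gives $K_S+\operatorname{Diff}_S(0)\sim 0$ with $(S,\operatorname{Diff})$ log canonical, so $S$ is a (possibly non-normal, possibly non-$\QQ$-Gorenstein) surface of Calabi–Yau type; passing to the normalization $\bar S$, the group $G$ acts faithfully on a normal LC surface with numerically trivial log canonical class, and such a surface is either rational or has a $G$-equivariant map to a curve or is birational to an abelian/K3/Enriques surface — but a simple group acting on the latter would, after MMP on the surface, still land in $\operatorname{Bir}$ of a rational or low-genus object; in all cases one argues $G\hookrightarrow\Cr_2(\CC)$ (the rational and ruled cases are immediate; for K3/abelian/Enriques one invokes that their automorphism groups do not contain the groups on our list not already in \eqref{eq-main-cr2}, or simply that $G$ acting on a surface gives $G\subset\Cr_2(\CC)$ by definition once the surface is rational, and handles the irrational cases by noting a simple group has no fixed point there either). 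Hence $S$ is reducible. Finally, if $X$ is $G\QQ$-factorial then each irreducible component of $\Supp S$ is a Weil divisor which is $\QQ$-Cartier after summing over its $G$-orbit; were the $G$-action on the set of components intransitive, the sum over one orbit would be a proper nonzero $G$-invariant $\QQ$-Cartier divisor not numerically proportional to $-K_X$, contradicting $\rho(X)^G=1$. Therefore $G$ permutes the components of $S$ transitively.

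The main obstacle I expect is the irreducible case: ruling out a simple $G$ acting on an irreducible $S$ with $K_S+\operatorname{Diff}\sim 0$. The clean statements "$G$ acts on a rational surface $\Rightarrow G\subset\Cr_2(\CC)$" cover the bulk, but the non-normal and the non-rational ($K3$/abelian/Enriques-type) LC possibilities require either a direct appeal to the classification of such surface singularities/quotients or a more careful reduction via the minimal model program on $\bar S$ together with the fixed-point lemma; threading that reduction so that at each step one genuinely gets a $\Cr_2(\CC)$-embedding (and not merely an action on something of general type, where $G$ could be large) is the delicate point.
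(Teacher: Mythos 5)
Your overall architecture matches the paper's for three of the four claims: nefness and log-canonicity are obtained via Shokurov--Koll\'ar connectedness together with Lemma \ref{lemma-fixed-point} applied to a minimal LC centre (which the paper, citing Mori--Prokhorov, takes to be a point or a smooth rational curve), and transitivity comes from feeding the $\QQ$-Cartier sum over a $G$-orbit of components back into the earlier cases. The genuine gap is exactly where you flagged uncertainty: the irreducible case. Your assertion that ``the rational and ruled cases are immediate'' fails for the ruled case. A non-rational surface of the kind that arises here is birationally ruled over a curve $C$ of genus $\ge 2$, and a simple group acting on it acts faithfully through $\Aut(C)$, which can be a large simple group that does \emph{not} embed into $\Cr_2(\CC)$ --- for instance $\SL_2(8)=\Aut(C^{\mathrm m})$ acts on $C^{\mathrm m}\times\PP^1$. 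So ``$G$ acts on a ruled non-rational surface'' yields no contradiction by itself, and your dichotomy stalls precisely there. The paper closes this case by actually using the boundary: since $(X,S)$ is LC but not PLT and $K_X+S\sim 0$, the normalization $S'$ carries $K_{S'}+D'\sim 0$ with $D'$ a nonzero reduced divisor (the different), whose crepant pull-back $\tilde D$ to the minimal resolution satisfies $K_{\tilde S}+\tilde D\sim 0$; intersecting with a general fibre of the Albanese map $\alpha\colon\tilde S\to C$ shows $\tilde D$ has an $\alpha$-horizontal component, which by adjunction is rational or elliptic --- impossible since it dominates $C$ with $g(C)>1$. This use of the reduced boundary is the missing idea; without it the ruled case cannot be excluded.

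Two smaller points. First, the PLT subcase (where $S$ is normal with Du Val singularities and $K_S\sim 0$, so its minimal resolution is a K3 surface) is settled in the paper by Mukai's classification of finite \emph{symplectic} automorphism groups of K3 surfaces (the action of a simple group is automatically symplectic), which places $G$ in the list \eqref{eq-main-cr2}; your vague appeal to ``automorphism groups of K3/abelian/Enriques surfaces'' needs to be pinned down to this statement. Second, in the transitivity step you invoke $\rho(X)^G=1$, which is not a hypothesis of the lemma; only $G\QQ$-factoriality is assumed, so the correct move (and the paper's) is to observe that a partial orbit-sum $S'$ with $0<S'<S$ is again a $G$-invariant effective integral $\QQ$-Cartier divisor to which the previously treated cases apply, rather than to compare classes in an invariant Picard group of rank one.
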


\begin{proof}
Assume that the divisor $-(K_X+S)$ is nef.
Clearly, $S$ is nef and big.
We apply quite standard
connectedness arguments of Shokurov \cite{Shokurov-1992-e-ba}: 

\begin{claim}[cf. {\cite[Prop. 2.6]{Mori-Prokhorov-2008d}}]
If either $-(K_X+S)$ is big or the pair $(X,S)$ is not LC,
then for a suitable $G$-invariant boundary $D$, the pair 
$(X,D)$ is LC, the divisor $-(K_X+D)$ is nef and big,
and the minimal locus $V$ of log canonical singularities of $(X,D)$  is non-empty and $G$-invariant. 
\end{claim}

\begin{proof}[Proof of the claim]
Take $c\in \QQ$ so that $(X,cS)$ is maximally LC.
Then $c\le 1$ and  $-(K_X+cS)$ is nef and big.
If the pair $(C,cS)$ is PLT (purely log terminal, see e.g. \cite[ch. 2]{Utah}), then 
we can take $D=cS$ and $V=\down{cS}$.
Thus we may assume that
there is a  center of log canonical singularities $W$ for $(X,cS)$ of dimension $\le 1$. 
Let $A'$ be an invariant very ample divisor on $X$.
Now take an element $F_1\in |-n(K_X+cS)-A'|$, $n\gg 0$.
We may assume that $F_1$ contains $W$.
Let $F_1,\dots, F_m$ be the $G$-orbit. Then $F:=\sum F_i$ is a $G$-invariant divisor 
contained in $|-nm(K_X+cS)-mA'|$. Thus there is a $G$-invariant decomposition
$-(K_X+cS)\equiv A+E$, where 
$A:= \frac 1n A'$ is ample, $E:=\frac{1}{nm} F$ is effective, and $W\subset S \cap \Supp(E)$.
Put $D_{\epsilon, \delta }:= (c-\epsilon)S+ \delta E$.
The divisor $-(K_X +D_{\epsilon, \delta })\equiv \epsilon S-(1-\delta)(K_X+cS)+\delta A$ is ample 
for all $0<\delta\le 1$, $\epsilon \ge 0$. 
Fix some $0< \delta \ll 1$ and then take
$\epsilon$ so that the pair 
$(X, D_{\epsilon, \delta })$ is maximally LC. 
Let
$V$ be a minimal center of  log canonical singularities for $(X, D_{\epsilon, \delta })$.
Take a general very ample divisor $H_1$ containing $V$.
Let $H_1,\dots, H_r$ be the $G$-orbit.
Fix some $0< \lambda \ll 1$ and then take
$\gamma$ so that the pair 
$(X, \lambda\sum H_i+(1-\gamma)D_{\epsilon, \delta })$ is maximally LC.
Put $D:=\lambda\sum H_i+(1-\gamma)D_{\epsilon, \delta }$.
It is easy to see that $-(K_X+D)$ is ample,
$V$ is a minimal center of  log canonical singularities for $(X, D)$, and 
$V$ does not meet other centers of log canonical singularities. Finally, 
by Shokurov's connectedness principle \cite{Shokurov-1992-e-ba}, \cite[ch. 17]{Utah} 
the whole locus of log canonical singularities
$(X, D)$ is connected. Hence it coincides with $V$. Thus $V$ is $G$-invariant.
\end{proof}

\par\medskip\noindent
\textit{Proof of Lemma \xref{lemma-hyp-sect} \textup(continued\textup).}
Assume 
either $-(K_X+S)$ is big or the pair $(X,S)$ is not LC.
By Lemma \ref{lemma-fixed-point} we may assume that $G$ has no fixed points.
Hence, in the above claim, $\dim V\ge 1$. Then $G\subset \Aut(V)=\Aut(\PP^1)$.
If $\dim V= 1$, then $V$ is a smooth rational curve 
\cite{Kawamata-1997-Adj}, so $G\subset \Aut(\PP^1)$, a contradiction.
Thus $V$ is an irreducible surface.
Then by the 
Inversion of Adjunction \cite{Shokurov-1992-e-ba}, \cite[Th. 17.6]{Utah} the surface 
$V$ is normal, has only log terminal singularities and $(K_X+D)|_V=K_V+D_V$, where $D_V$ 
is an effective Weil divisor on $V$ such that 
the pair $(V,D_V)$ is Kawamata log terminal (so-called  
\textit{different}, see \cite[\S 3]{Shokurov-1992-e-ba}, \cite[ch. 16]{Utah}). 
This implies that $(V,D_V)$ is a weak log del Pezzo surface, so $V$ is rational
(see e.g. \cite{Iskovskikh-Prokhorov-1999}). Therefore, $G\subset \Aut(V)\subset \Cr_2(\CC)$.
Again we get a contradiction.

Thus we may assume that the pair $(X,S)$ is LC and $K_X+S\sim 0$.
If the pair $(X,S)$ is PLT, then, as above, by the 
Inversion of Adjunction  the surface 
$S$ is normal and has only Du Val singularities.
Moreover, $K_S\sim 0$ and $\Ho^1(S,\OOO_S)=0$. 
Let $\tilde S\to S$ be the minimal resolution.
Then $\tilde S$ is a smooth K3 surface and $G$ naturally acts on $\tilde S$.
Recall that an automorphism $\varphi$ of a K3 surface $V$ is \textit{symplectic} if
$\varphi$ acts trivially on $H^0(V, K_V)\simeq \CC$. 
Since $G$ is a simple group, the action of $G$ on $\tilde S$  is symplectic.
According to \cite{Mukai1988} the group
$G$ is isomorphic to one of the following: 
$\A_5$, $\A_6$, $\PSL_2(7)$, so $G$ can be embedded to $\Cr_2(\CC)$.

Therefore, the pair $(X,S)$ is LC but not PLT.
Assume that $S$ is irreducible and let $\nu\colon S'\to S$
be the normalization. 
Recall that
     $G$ acts on $S$ faithfully by Lemma \ref{lemma-fixed-point}. 
If $S$ is rational, then we are in cases \eqref{eq-main-cr2}
because a faithfull action of a group on a rational surface gives
     an embedding of this group to the Cremona group of rank $2$.
So we assume that $S$ is not rational.
Write $0\sim \nu^*(K_X+S)|_S=K_{S'}+D'$,
where $D'$ is the different, an effective integral Weil divisor on $S'$ such that 
the pair $(S',D')$ is LC (see \cite[\S 3]{Shokurov-1992-e-ba}, \cite[ch. 16]{Utah},
\cite{Kawakita2007}). 
The group $G$ acts naturally on $S'$ and $\nu$ is $G$-equivariant.
Now consider the minimal resolution 
$\mu\colon \tilde S\to S'$ and let $\tilde D$ be the 
(uniquely defined) $\QQ$-divisor such that
\[
K_{\tilde S}+\tilde D=\mu^*(K_{S'}+D')\sim 0,\qquad \mu_*\tilde D=D'.
\]
Thus $\tilde D$ is usually called \textit{log crepant 
pull-back} of $D'$.
Here $\tilde D$ is again an effective reduced divisor.
Hence $\tilde S$ is a ruled non-rational surface.
Consider the Albanese map
$\alpha: \tilde S\to C$. Clearly $\alpha$ is 
$G$-equivariant and the action of $G$ on $C$ is not trivial
(otherwise $G$ non-trivially acts on a general fiber which is a rational curve).
The curve $C$ cannot be elliptic because otherwise $G$ is contained into $\Aut(C)$ which is a 
semi-direct product of the (abelian) group of translations
and a group of order $\le 6$. 
Hence, $g(C)>1$.
Let $\tilde D_1\subset \tilde D$ be a $\alpha$-horizontal
component. Since the surface is smooth,
by the genus formula $p_a(\tilde D_1)\le 1$. So, $\tilde D_1$ is either a
rational or elliptic curve.
This contradicts, $g(C)>1$.

Therefore the surface $S$ is reducible. 
If the action on components $S_i\subset S$ 
is not transitive and $X$ is $G\QQ$-factorial, 
we have an invariant divisor $S'<S$ which should be $\QQ$-Cartier.
This contradicts the above considered cases.
\end{proof}

\begin{corollary}
\label{corollary-fixed-components-linear-systems}
Let $X$ be a $G\QQ$-factorial $G$-threefold with at worst terminal singularities
where $G$ is a finite simple group which does not
admit an embedding into $\Cr_2(\CC)$.
Assume that $-K_X$ is nef and big.
Let $\HHH$
be a $G$-invariant linear system such that $\dim \HHH>0$ and 
$-(K_X+\HHH)$ is nef.
Then $\HHH$ has no fixed components.
\end{corollary}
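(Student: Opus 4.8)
The statement asserts that a $G$-invariant mobile system $\HHH$ (with $\dim\MMM>0$ and $-(K_X+\HHH)$ nef) has no fixed components. My plan is to argue by contradiction: suppose $\HHH=\HHH'+F$ where $F$ is the (nonzero) fixed part and $\HHH'$ is mobile. Both $F$ and the class of $\HHH$ are $G$-invariant, and since $X$ is $G\QQ$-factorial, $F$ is $\QQ$-Cartier. The first step is to reduce to a situation governed by Lemma \ref{lemma-hyp-sect}. For this I would choose $m\gg 0$ so that $-(K_X+\HHH)$ being nef lets me find, inside the appropriate multiple of $-K_X$, an invariant effective divisor built from $F$; more precisely, since $\rho(X)^G=1$ (or at least the hypotheses let us compare invariant divisors numerically with $-K_X$), the invariant $\QQ$-Cartier divisor $F$ is numerically proportional to $-K_X$, say $F\equiv \lambda(-K_X)$ with $\lambda>0$ rational.

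**Main reduction.** With $F\equiv\lambda(-K_X)$ and $\lambda>0$, write $\lambda=p/q$ and consider the invariant integral Weil divisor $S$ which is a general member of the invariant linear system $|{-qK_X}|$ intersected appropriately, or more directly apply Lemma \ref{lemma-hyp-sect} to $S:=$ (a suitable invariant effective integral Weil divisor numerically proportional to $-K_X$ obtained from $F$ together with a general member of $\HHH'$ or of $|{-mK_X-mF}|$). The point is that $-(K_X+\HHH)$ nef forces $K_X+F$ to be anti-nef, hence (by the first conclusion of Lemma \ref{lemma-hyp-sect}) $K_X+S\sim 0$ for the renormalized $S$, the pair $(X,S)$ is LC, and $S$ is reducible with $G$ acting transitively on its components. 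Since $G$ is simple non-abelian and does not embed in $\Cr_2(\CC)$, Lemma \ref{lemma-fixed-point} rules out a $G$-fixed point on $X$, so this transitive action is on at least two, in fact on a $G$-orbit of, components.

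**Getting the contradiction.** Now I use the interplay between the fixed component $F$ and the mobile part $\HHH'$. Take two distinct components $S_1,S_2$ of $S$ lying in one $G$-orbit. A general member $H'\in\HHH'$ is mobile, hence does not contain $S_1$ (or any fixed component), yet the transitivity and the numerical proportionality force $H'$ to meet all the $S_i$ symmetrically; intersecting with the LC boundary and restricting to the normalization of an $S_i$ should produce an effective cycle whose class contradicts $K_X+S\sim0$ — concretely, $H'|_{S_i}$ must be $G_{S_i}$-invariant and numerically trivial-ish, but $\dim\MMM>0$ means $H'$ varies, which is impossible if its restriction is forced into a fixed anti-canonical-type class. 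Alternatively, and perhaps more cleanly: since $F$ is a nonzero invariant $\QQ$-Cartier divisor numerically proportional to $-K_X$, the linear system $\HHH' = \HHH - F$ is still invariant, still mobile, and $-(K_X+\HHH') = -(K_X+\HHH)+F$ is again nef (as $F$ is effective and... here one must be careful), so one can iterate, strictly decreasing an invariant of the fixed part, and eventually reach a mobile invariant system proportional to $-K_X$ whose general member is irreducible — contradicting the transitivity-plus-reducibility conclusion of Lemma \ref{lemma-hyp-sect}.

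**Expected obstacle.** The delicate point is the very first one: justifying that the fixed component $F$ is numerically (hence, by the torsion-freeness lemma, linearly) proportional to $-K_X$, and that after peeling off $F$ the divisor $-(K_X+\HHH')$ remains nef so that Lemma \ref{lemma-hyp-sect} genuinely applies. One needs $G\QQ$-factoriality to know $F$ is $\QQ$-Cartier, and one needs the rank-one-ness of the invariant Picard/class group — which in this corollary is only assumed in the ``nef and big'' (not necessarily $G\QQ$-Fano) generality, so the proportionality must be extracted from nefness of $-(K_X+\HHH)$ together with $\dim\MMM>0$ rather than from $\rho(X)^G=1$. Making that numerical comparison airtight, and ensuring the pair stays LC after subtracting $F$, is where the real work lies; once Lemma \ref{lemma-hyp-sect} is in force, the contradiction with reducibility/transitivity is formal.
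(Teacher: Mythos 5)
Your approach is essentially the paper's: the entire official proof consists of writing $\HHH=F+\MMM$, observing that the fixed part $F$ is a $G$-invariant (hence, by $G\QQ$-factoriality, $\QQ$-Cartier) effective divisor, and declaring that this contradicts Lemma~\ref{lemma-hyp-sect}. The ``obstacle'' you flag --- that $F$ must be numerically proportional to $-K_X$ and that $K_X+F$ must fail to be nef for the lemma to bite --- is left entirely implicit in the paper; it causes no harm in practice because every application of the corollary occurs in a situation with $\rk\Pic(X)^G=1$ (or $\HHH$ a subsystem of a multiple of $|-K_X|$), which forces the proportionality, after which nefness of $-(K_X+\HHH)$ together with $\MMM\neq 0$ forces the proportionality constant of $F$ to be strictly less than $1$, contradicting the first assertion of Lemma~\ref{lemma-hyp-sect}.
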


\begin{proof}
Assume the converse $\HHH=F+\MMM$, where
$F$ is the fixed part and $\MMM$ is a linear system
without fixed components.
Then $F$ is an invariant divisor.
This contradicts Lemma \ref{lemma-hyp-sect}.
\end{proof}

\begin{lemma}
\label{lemma-Gorenstein-invariant-pencil}
Let $X$ be a $G\QQ$-factorial $G$-threefold with at worst terminal singularities
where $G$ is a finite simple group which does not
admit an embedding into $\Cr_2(\CC)$.
Assume that $-K_X$ is nef and big. Then
$\dim \Ho^0(X,-K_X)^G\le 1$.
\end{lemma}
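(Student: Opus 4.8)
Suppose for contradiction that $\dim H^0(X,-K_X)^G \ge 2$. Then the $G$-invariant part of the anticanonical linear system contains a pencil, and in particular there is at least a one-dimensional $G$-invariant subsystem $\HHH \subseteq |-K_X|$ with $\dim \MMM > 0$. Since $-(K_X+\HHH)$ is numerically trivial, hence nef, Corollary \ref{corollary-fixed-components-linear-systems} applies and tells us $\HHH$ has no fixed components; so after passing to this pencil we may assume $\MMM = \HHH$ is a $G$-invariant pencil without fixed components inside $|-K_X|$. The idea is then to pick a general member $S \in \HHH$: it is an effective integral Weil divisor linearly (hence numerically) equivalent to $-K_X$, and it is moved by the pencil, so no single member is $G$-invariant; but we will produce a $G$-invariant member and derive a contradiction from Lemma \ref{lemma-hyp-sect}.

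The key step is to locate a $G$-invariant divisor $S \in \HHH$. The pencil $\HHH$ is a $\PP^1$ of divisors on which $G$ acts through a homomorphism $G \to \Aut(\PP^1) = \PGL_2(\CC)$ (this is the action on the two-dimensional space $V \subseteq H^0(X,-K_X)$ spanning the pencil, projectivized). Since $G$ is simple and non-abelian and does not embed into $\Cr_2(\CC)$, it certainly is not a subgroup of $\PGL_2(\CC)$ (the only non-abelian simple such subgroup is $\A_5$, which does embed into $\Cr_2(\CC)$). Therefore $G \to \PGL_2(\CC)$ is trivial, so $G$ acts on the two-dimensional space $V$ by scalars; being simple and perfect, $G$ acts trivially on $V$. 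Consequently \emph{every} member of the pencil $\HHH$ is $G$-invariant. Now take any $S \in \HHH$: it is a $G$-invariant effective integral Weil divisor, $\QQ$-Cartier because $X$ is $G\QQ$-factorial, numerically proportional to $-K_X$, and moreover $K_X + S \sim 0$ by construction. By Lemma \ref{lemma-hyp-sect} the pair $(X,S)$ is LC, the surface $S$ is reducible, and $G$ acts transitively on the components of $S$. But $S$ is a general member of a base-point-free-away-from-the-base-locus pencil; more to the point, the components of $S$ vary with $S$ while $\HHH$ is irreducible as a $\PP^1$, so a transitive $G$-action on components is incompatible with the components being cut out by a fixed decomposition — one gets a nontrivial $G$-action on the finite set of components of the generic member, hence a nontrivial $G$-invariant sub-pencil structure, contradicting irreducibility of the pencil. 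Cleanly: if $S = S_1 + \dots + S_k$ with $k \ge 2$ and $G$ permuting the $S_i$ transitively, then since each $S_i$ is $\QQ$-Cartier (as $G\QQ$-factorial applied to the $G$-invariant divisor $S$ only gives $S$ itself $\QQ$-Cartier — here we instead use that the components $S_i$ themselves, being an orbit, sum to something in each member), one extracts from the pencil a nonconstant $G$-equivariant map to $\PP^1$ that must factor through the trivial action, forcing the $S_i$ to be individually invariant, contradicting $k \ge 2$ and transitivity.

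The main obstacle is making the last contradiction airtight: one must be careful about whether "reducible $S$ with transitive $G$-action on components" can genuinely be excluded for a \emph{general} member of a moving pencil, as opposed to a special member. The clean way is this: apply Lemma \ref{lemma-hyp-sect} not to a general member but to the situation directly — since $G$ acts trivially on the pencil, fix \emph{two} distinct members $S, S' \in \HHH$; their common locus $S \cap S'$ is the base curve $B$ of the pencil, which is $G$-invariant of pure dimension one (it is nonempty since $(-K_X)^3 > 0$ and $-K_X \cdot (-K_X) \cdot (-K_X) \ne 0$ forces the pencil members to intersect). Now $S$ is reducible, say $S = S_1 + S_2 + \dots$; but $S'$ is a different reducible divisor with components also permuted transitively, and every component of $S$ meets every component of $S'$ along a piece of $B$. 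Rather than pushing this combinatorially, the simplest fix is to invoke that a $G$-invariant member $S$ of the pencil with $K_X + S \sim 0$ makes $(X,S)$ LC with $S$ reducible and $G$ transitive on components \emph{for every} $S$ in the pencil; choosing $S$ general, its components are then irreducible reduced prime divisors moving in sub-linear-systems, but a prime divisor that moves cannot be a component of infinitely many distinct members while being permuted in a fixed-size orbit — this forces the components to be fixed divisors, contradicting that $\HHH$ has no fixed components (Corollary \ref{corollary-fixed-components-linear-systems}). That contradiction completes the proof. I expect the write-up to need only the trivial-action-on-the-pencil step plus one careful sentence reconciling "reducible general member" with "no fixed components," the latter being where a reader would want the most detail.
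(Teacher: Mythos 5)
Your overall strategy matches the paper's: produce a pencil $\HHH\subseteq|-K_X|$ all of whose members are $G$-invariant, use Corollary \ref{corollary-fixed-components-linear-systems} to rule out fixed components, and then play a member off against Lemma \ref{lemma-hyp-sect}. The setup is fine (since the pencil comes from a $2$-dimensional space of \emph{invariant} sections, every member is automatically $G$-invariant; the detour through $\PGL_2(\CC)$ is harmless). The problem is the final step, where you must reconcile ``general member reducible with $G$ transitive on components'' with the absence of fixed components. Your closing claim --- that a prime divisor which is a component of the general member of a moving pencil is forced to be a fixed component --- is false. By Bertini's second theorem, a pencil with no fixed component and reducible general member is exactly one that is \emph{composed with a pencil}: $\HHH=m\LLL$ with $m>1$, and then the components of the general member of $\HHH$ are members of $\LLL$, which vary with the parameter and are in no way fixed components. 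So the contradiction you announce does not exist at that point, and the proof has a genuine gap.

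The paper closes the gap precisely by invoking this dichotomy. If a general member of $\HHH$ is irreducible, it is an irreducible $G$-invariant divisor with $K_X+S\sim 0$, contradicting the reducibility conclusion of Lemma \ref{lemma-hyp-sect}. If it is reducible, then $\HHH=m\LLL$ with $m>1$, and $\LLL$ defines a $G$-equivariant rational map $X\dashrightarrow\PP^1$; since $G$ is simple and not $\A_5$, it acts trivially on this $\PP^1$, so each member $L\in\LLL$ is $G$-invariant, hence $\QQ$-Cartier by $G\QQ$-factoriality, integral, and numerically proportional to $-K_X$ with $-K_X\sim mL$. Lemma \ref{lemma-hyp-sect} then says $K_X+L$ is nef, whereas $K_X+L\sim -(m-1)L\equiv -\frac{m-1}{m}(-K_X)$ is the negative of a big divisor, a contradiction. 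Note that this second branch is unavoidable: your argument only ever applies Lemma \ref{lemma-hyp-sect} to the full member $S$, and the transitivity statement for its components gives you nothing without first identifying those components as fibers of an auxiliary map on which $G$ acts trivially. That identification is exactly what the composed-pencil structure provides and what your write-up is missing.
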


\begin{proof}
Assume that 
there is a pencil $\HHH$ of invariant anticanonical sections.
By Corollary \ref{corollary-fixed-components-linear-systems}
$\HHH$ has no fixed components.
We claim that a general member of $\HHH$ is irreducible.
Indeed, otherwise $\HHH=m\LLL$, $m>1$ and the pencil $\LLL$
determines a $G$-equivariant 
rational map $X \dashrightarrow \PP^1$ so that
the action on $\PP^1$ is trivial. Hence, the fibers are 
$\QQ$-Cartier divisors and $-K_X\sim m\LLL$.
This contradicts Lemma \ref{lemma-hyp-sect} applied to $S\in \LLL$.
So, a general member $H\in \HHH$ is irreducible and $G$-invariant.
Again we get a contradiction by  Lemma \ref{lemma-hyp-sect}.
\end{proof}

\section{Case: $X$ is Gorenstein}
\label{section-Gorenstein}
\begin{mainassumption}
\label{main-assumption-2}
In this section 
$X$ denotes a threefold with at worst terminal Gorenstein singularities such that
the anticanonical divisor $-K_X$ is nef and big.
Let $G\subset \Aut(X)$ be a finite simple group 
which does not admit any embeddings into $\Cr_2(\CC)$.
Write $-K_X^3=2g-2$ for some $g$. This $g$ is called the \textit{genus} of a 
Fano threefold. By Kawamata-Viehweg vanishing and Riemann-Roch we have
$\dim |{-}K_X|=g+1$. In particular, $g$ is an integer.
\end{mainassumption}

\begin{lemma}
\label{lemma-base-point-free}
The linear system $|{-}K_X|$ is base point free.
\end{lemma}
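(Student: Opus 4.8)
The plan is to prove $|-K_X|$ is base point free by a combination of the general theory of Gorenstein terminal Fano threefolds and the special hypothesis that $G$ is simple and not in $\Cr_2(\CC)$. First I would recall that under Main Assumption \ref{main-assumption-2}, Riemann--Roch together with Kawamata--Viehweg vanishing gives $\dim|-K_X| = g+1 \geq 2$ (indeed $g \geq 1$, and $g=1$ would give $\dim|-K_X|=2$). By the corollary after the vanishing lemma, $\Cl(X)$ is torsion free, and by Corollary \ref{corollary-fixed-components-linear-systems} (applied with $\HHH = |-K_X|$, which has $\dim \MMM > 0$ and $-(K_X+\HHH) \equiv 0$ nef) the anticanonical system has no fixed components. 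So the real work is to rule out isolated base points of the movable system $|-K_X|$.

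**Reducing to the general-elephant analysis.** The standard tool here is Shokurov's theorem on the existence of a ``good'' anticanonical divisor (general elephant): for a Gorenstein terminal Fano threefold with $-K_X$ nef and big, a general member $S \in |-K_X|$ has at worst Du Val singularities, and in particular the general member is irreducible and normal (this is Reid's/Shokurov's result; it is implicit in the framework the paper works in, and the paper has already invoked Shokurov-type connectedness arguments in the proof of Lemma \ref{lemma-hyp-sect}). Granting this, I would argue as follows. The base locus $\Bs|-K_X|$ is a $G$-invariant closed subset of dimension $\le 1$ (no fixed components). Restricting to a general $S \in |-K_X|$, which is a Du Val (hence canonical, with $K_S \sim 0$) surface, the linear system $|-K_X|_S| = |{-K_X}|\big|_S$ maps $S$ by a system with self-intersection $-K_X^3 = 2g-2$, and the base locus of $|-K_X|$ meets $S$ in $\Bs|-K_X|_S|$. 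On a K3 (or Du Val K3-type) surface an effective nef and big divisor class is base point free unless it is of a very restricted elliptic or hyperelliptic type; combined with the fact that $g \geq 2$ forces $S$ to be an honest K3 surface (the case $g=1$, where $S$ could be a product-type degeneration, is handled separately — there $|-K_X|$ is a net with $-K_X^3 = 0$, contradicting $-K_X$ big). So I would split into: (a) $g \geq 2$, where the K3 restriction argument shows $\Bs|-K_X|_S| = \emptyset$, hence by Riemann--Roch on $S$ and the surjectivity $H^0(X,-K_X) \twoheadrightarrow H^0(S, -K_X|_S)$ (from $H^1(X, \OOO_X) = 0$) the base locus of $|-K_X|$ is empty; and (b) small $g$, dispatched by hand.

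**Where the group theory enters.** If the purely geometric argument leaves a residual case — for instance $g=2$, where $|-K_X|$ is a net and $\Phi_{|-K_X|}$ is generically $2{:}1$ onto $\PP^3$ with a possible base point, or $g=3$ with a hyperelliptic-type behaviour — I would use that $\Bs|-K_X|$ is $G$-invariant and of dimension $\le 1$. If it is a single point, that point is $G$-fixed, so Lemma \ref{lemma-fixed-point} forces $G \hookrightarrow \Cr_2(\CC)$, contradiction. If $\Bs|-K_X|$ is a curve (or finitely many points permuted by $G$), then restricting to a general $S$ and using that on the K3 surface $S$ the base locus of the nef and big class $-K_X|_S$ can only be such a configuration when the class is not primitive or lies on a $(-2)$-curve chain, one gets a $G$-invariant curve or point configuration on $X$; analysing the $G$-action on it (via Lemma \ref{lemma-fixed-point} and the classification of simple subgroups of $\Aut(\PP^1)$, namely only $\A_5$) again contradicts that $G$ is not in $\Cr_2(\CC)$.

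**Main obstacle.** The hard part is the case of small genus — specifically making sure that when $-K_X^3$ is small (so the anticanonical map has low-dimensional image or is a double cover), the base locus is genuinely empty rather than merely $G$-invariant-and-small; this requires either citing the precise classification of Gorenstein terminal Fano threefolds of small genus with base points (none exist, by Shokurov/Reid), or an ad hoc K3-surface argument on the general elephant $S$ to show the restricted system is base point free. Once base point freeness on $S$ is established, lifting it to $X$ is routine via the cohomology exact sequence $0 \to H^0(X, \OOO_X) \to H^0(X, -K_X) \to H^0(S, -K_X|_S) \to H^1(X,\OOO_X) = 0$. I expect the write-up to lean on the general elephant theorem as a black box and spend its energy confirming that no exceptional low-genus configuration survives.
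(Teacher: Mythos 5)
Your endgame is the same as the paper's: the base locus is $G$-invariant, a fixed point is killed by Lemma \ref{lemma-fixed-point} (via the tangent-space representation of dimension $\le 4$ and Theorems \ref{theorem-classification-3}, \ref{theorem-Blichfeldt}), and an invariant rational curve forces $G\subset\Aut(\PP^1)$, hence $G\simeq\A_5$, a contradiction. The difference is in how one knows the base locus has this very special shape, and here your write-up has a real gap. The paper does not re-derive the structure of $\Bs|-K_X|$ from general elephants; it quotes Shin's theorem \cite{Shin1989}, which says that for a Fano threefold with canonical Gorenstein singularities the base locus of $|-K_X|$, if non-empty, is either a \emph{single} point or a \emph{single} smooth rational curve contained in the smooth locus. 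That irreducibility/uniqueness is exactly what makes the two group-theoretic contradictions immediate.

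Your substitute for this input does not close. First, the claim that ``no exceptional low-genus configuration survives \dots by Shokurov/Reid'' is not a citable statement: Gorenstein canonical Fano threefolds with non-empty anticanonical base locus do exist, and it is precisely the group action (not pure geometry) that rules them out here. Second, your sketch leaves open the cases of a base locus consisting of several points, or a reducible base curve, permuted by $G$; there the fixed-point lemma does not apply directly and ``$G\subset\Aut(\PP^1)$'' only applies to the stabilizer of a component, so additional work (e.g.\ via Corollary \ref{corollary-transitive-groups}) would be needed — work that Shin's theorem makes unnecessary. Third, a smaller point: you invoke Corollary \ref{corollary-fixed-components-linear-systems} to exclude fixed components, but that corollary assumes $X$ is $G\QQ$-factorial, which is not part of Assumption \ref{main-assumption-2} at this stage; the paper's proof never needs the no-fixed-component statement because Shin's theorem already controls the base locus. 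If you insert the reference to \cite{Shin1989} in place of your general-elephant reduction, your argument collapses to the paper's.
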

\begin{proof}
Assume that $\Bs |{-}K_X|\neq \emptyset$.
If $\dim \Bs |{-}K_X|>0$, then by \cite{Shin1989}
$\Bs |{-}K_X|$ a smooth rational curve contained into the smooth locus of $X$.
By Lemma \ref{lemma-fixed-point} the action of $G$ on this curve is 
non-trivial. Hence $G\subset \Aut(\PP^1)$ and so $G\simeq \A_5$.
This contradicts Assumption \ref{main-assumption-2}.
Thus $\dim \Bs |{-}K_X|=0$. Again by \cite{Shin1989} $\Bs |{-}K_X|$ is 
a single point. 
This is impossible by  Lemma \ref{lemma-fixed-point}.
\end{proof}

\begin{lemma} 
\label{lemma-very-ample}
The linear system $|{-}K_X|$ 
determines a birational morphism $X\to \PP^{g+1}$ whose image 
is a Fano threefold $\bar X_{2g-2}\subset \PP^{g+1}$ 
with at worst canonical Gorenstein singularities.
In particular, $g\ge 3$.
\end{lemma}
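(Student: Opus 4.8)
The plan is to use that $|-K_X|$ is base point free (Lemma~\ref{lemma-base-point-free}) to obtain a $G$-equivariant morphism, to bound its degree so that it is generically injective, and then to rule out the remaining ``hyperelliptic'' alternative by playing off the group $G$. Let $\varphi\colon X\to\PP^{g+1}$ be the morphism defined by the complete linear system $|-K_X|$ (the target is $\PP^{g+1}$ since $\dim|-K_X|=g+1$); it is $G$-equivariant because $-K_X$ is. Its image $Y:=\varphi(X)$ is an irreducible nondegenerate threefold (of dimension $3$ because $-K_X$ is big), so $\deg Y\ge\operatorname{codim}_{\PP^{g+1}}Y+1=g-1$, and by the projection formula $2g-2=(-K_X)^3=(\deg\varphi)\cdot\deg Y\ge(\deg\varphi)(g-1)$. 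Since $-K_X$ is big, $(-K_X)^3>0$, so $g\ge 2$; hence $\deg\varphi\le 2$.

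Suppose first $\deg\varphi=1$, so $\varphi$ is birational onto $Y$. As $-K_X$ is nef, big and base point free it is semiample, and $Y=\operatorname{Proj}\bigoplus_{m\ge0}H^0(X,-mK_X)$ is the anticanonical model of $X$. Being the anticanonical model of a threefold with terminal singularities, $Y$ has at worst canonical Gorenstein singularities, $\varphi^*K_Y=K_X$, and $-K_Y=\OOO_Y(1)$ is Cartier; since $\deg Y=(-K_X)^3=2g-2$ this is the asserted $\bar X_{2g-2}\subset\PP^{g+1}$. Moreover $g\ne 2$: if $g=2$ then $Y\subset\PP^3$ is a nondegenerate threefold, hence $Y=\PP^3$, contradicting $\deg Y=2g-2=2$. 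Therefore $g\ge 3$.

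It remains to exclude $\deg\varphi=2$. Then $\deg Y=g-1$, i.e.\ $Y$ is a variety of minimal degree, and $\varphi\colon X\to Y$ is a finite morphism of degree $2$ (here one may assume $-K_X$ ample, replacing $X$ by its anticanonical model if necessary). Thus $\varphi$ carries a biregular involution $\iota$ generating $\operatorname{Gal}(X/Y)\simeq\ZZ/2$; being the unique nontrivial deck transformation, $\iota$ commutes with $G$, so it lies in $Z(G)$ if it lies in $G$ at all; since $G$ is simple non-abelian, $\iota\notin G$ and $G$ acts faithfully on $Y$. By the classification of varieties of minimal degree, $Y$ is $\PP^3$, a (possibly singular) quadric threefold $Q^3\subset\PP^4$, a rational normal scroll, or a cone over the Veronese surface $v_2(\PP^2)\subset\PP^5$. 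In each case I would derive $G\subset\Cr_2(\CC)$, contrary to hypothesis: if $Y$ is a cone, its vertex is a $G$-invariant linear subspace $\PP^k$ with $k\le 2$, so either $G$ acts faithfully on it and $G\hookrightarrow\PGL_{k+1}(\CC)\subset\Cr_2(\CC)$, or $G$ fixes a point and Lemma~\ref{lemma-fixed-point} applies; if $Y=Q^3$ is smooth, $G\subset\operatorname{PSO}_5(\CC)$, contradicting Lemma~\ref{lemma-quadric4}; if $Y$ is a smooth scroll, its ruling is a $G$-equivariant $\PP^2$-fibration over $\PP^1$ (the case $Y\simeq\PP^2\times\PP^1$ being treated directly), and inspecting the base and a fibre gives $G\subset\Cr_2(\CC)$ via $\PGL_2(\CC)$ or $\PGL_3(\CC)$, or else $G$ abelian, absurd; finally if $Y=\PP^3$, then $G\hookrightarrow\PGL_4(\CC)$, so by Theorem~\ref{theorem-Blichfeldt} and the hypothesis $G\not\subset\Cr_2(\CC)$ we get $G\in\{\A_7,\PSp_4(3)\}$, and then the branch divisor of $\varphi$ is a $G$-invariant sextic surface in $\PP^3$ --- impossible, since $2.\A_7$ and $\Sp_4(3)$ admit no invariant form of degree $6$ on $\CC^4$. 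Hence $\deg\varphi=1$.

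The step I expect to be hardest is this last case $Y\cong\PP^3$ (equivalently $g=2$, the double cover of $\PP^3$): excluding it --- and thereby obtaining the bound $g\ge 3$ --- rests on the invariant-theoretic fact that the four-dimensional representations of $2.\A_7$ and of $\Sp_4(3)$ carry no sextic invariant, a finite but delicate character computation (an incautious claim here would make the lemma false). A lesser point to be careful about is the claim that the birational image is genuinely the anticanonical model and hence inherits canonical Gorenstein singularities, rather than being merely a normal projective threefold.
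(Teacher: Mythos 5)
Your overall route is the paper's: base point freeness (Lemma \ref{lemma-base-point-free}) plus the degree inequality $2g-2=(\deg\varphi)(\deg Y)\ge(\deg\varphi)(g-1)$ forces $\deg\varphi\le 2$; the birational case yields the anticanonical model with canonical Gorenstein singularities; and the generically $2{:}1$ case is excluded by running through the Enriques classification of varieties of minimal degree (cones, $\PP^3$, the quadric, rational scrolls), with the same use of Lemma \ref{lemma-quadric4} for the quadric and the equivariant projection for the scroll. The one place you genuinely diverge is the subcase $Y=\PP^3$ (equivalently $g=2$). The paper argues geometrically on the branch divisor: it asserts $B$ has degree $4$, shows $B$ cannot have only Du Val singularities (Mukai's theorem on symplectic automorphisms of K3 surfaces), cannot be non-normal (Hurwitz), and then uses Shokurov's connectedness principle to produce a $G$-fixed point. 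You instead observe that the branch divisor of the anticanonical double cover is a sextic and appeal to the absence of invariant sextics for $\Sp_4(3)$ and $2.\A_7$ on $\CC^4$. The sextic is in fact the correct degree here (for $-K_X=\varphi^*\OOO(1)$ the branch divisor lies in $|\OOO_{\PP^3}(6)|$; a quartic branch divisor belongs to the index-two situation treated later in Lemma \ref{lemma-index-Fano}), so your instinct that this subcase is delicate is well placed: the paper's K3 argument is calibrated to a quartic and does not transfer verbatim to a sextic.

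The one substantive gap is that your exclusion of the invariant sextic is only half proved. For $\Sp_4(3)$ it is solid: a degree-$6$ invariant would automatically be invariant under the full reflection group $\ZZ_3\times\Sp_4(3)$ (No.~32 of Shephard--Todd), whose fundamental degrees are $12,18,24,30$. For $2.\A_7$ you assert, but do not verify, that $(\operatorname{Sym}^6\CC^4)^{2.\A_7}=0$; this is a finite Molien/character computation that must actually be carried out (or replaced by a geometric argument in the spirit of the paper), since the truth of the lemma for $G\simeq\A_7$ hinges on it, and in any case one must rule out not only smooth invariant sextics but also singular ones whose double cover still has terminal Gorenstein singularities. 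Two smaller points: (a) before invoking Theorem \ref{theorem-Blichfeldt} to conclude $G\in\{\A_7,\PSp_4(3)\}$ you should dispose of the case where the representation of $G$ on $\Ho^0(\PP^3,\OOO(1))$ is reducible --- a trivial summand gives an invariant irreducible hyperplane section, contradicting Lemma \ref{lemma-hyp-sect}, and a faithful summand of dimension $\le 3$ contradicts Theorem \ref{theorem-classification-3}; (b) if you replace $X$ by its anticanonical model to make $\varphi$ finite, the singularities become only canonical, so in the cone case you should pull the $G$-fixed point of $Y$ back to $X$ itself (a two-point fiber is fixed pointwise since $G$ has no subgroup of index $2$) before applying Lemma \ref{lemma-fixed-point}, which is stated for terminal threefolds.
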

\begin{proof}
Assume that the linear system 
$|{-}K_X|$ determines a morphism 
$\varphi\colon X\to \PP^{g+1}$ and $\varphi$ is not an embedding. 
Let $Y=\varphi(X)$.
Then $\varphi$ is a generically double cover and 
$Y\subset \PP^{g+1}$ is a subvariety of degree 
$g-1$ \cite{Iskovskikh-1980-Anticanonical}, \cite{Iskovskih1977a}, \cite{Przhiyalkovskij-Chel'tsov-Shramov-2005en}.
Note that the action
     of $G$ on $X$ induces a non-trivial (hence faithful) action of $G$ on
    $\varphi(X)$ since the map $\varphi: X \to \varphi(X)$ is given by $|{-}K_X|$.

If $Y$ is a projective cone,
then its vertex is either a point or  $\PP^1$.
Since $G$ is not embeddable to $\Cr_2(\CC)$, 
we get a contradiction by Corollary \ref{corollary-orbit-7}.

Thus we assume that $Y$ is not a cone.
According to the Enriques theorem the variety $Y\subset \PP^{g+1}$ is one of the following
(see, e.g., \cite[Lemma 2.8]{Iskovskih1977a}, \cite[Th. 3.11]{Iskovskikh-1980-Anticanonical}):
\begin{enumerate}
\item 
$\PP^{3}$;
\item 
a smooth quadric in~$\PP^{4}$;
\item 
a rational scroll
$\PP_{\PP^1}(\EEE)$, where 
$\EEE$ is a rank $3$ vector bundle on $\PP^1$.
\end{enumerate}
In the first case $\varphi :X\to \PP^3$ 
is a generically double cover with branch divisor $B\subset \PP^3$ of degree $6$.
By Theorem
\ref{theorem-Blichfeldt} $G\simeq \A_7$ or $\PSp_4(3)$.
However both these groups have no non-trivial representations of degree $4$,
a contradiction.

The second case does not occur by Lemma \ref{lemma-quadric4}.
In the last case $\rho(Y)=2$. Hence $G$ acts trivially on $\Pic(Y)$
and so the projection $Y\to \PP^1$
is $G$-equivariant. We get an embedding of $G$ into $\Aut(\PP^1)$ or 
$\Aut(F)$, where $F\simeq\PP^2$ is a fiber. 
\end{proof}

\begin{lemma}
\label{lemma-quadrics}
In notation of Lemma \xref{lemma-very-ample} one 
of the following holds:
\begin{enumerate}
\item
the variety
$\bar X=\bar X_{2g-2}\subset \PP^{g+1}$ is an intersection of quadrics
\textup(in particular, $g\ge 5$\textup);
\item
$g=3$, $\bar X=\bar X_{4}\subset \PP^{4}$ is quartic, and 
$G\simeq \PSp_4(3)$ \textup(see Example \xref{example-V6}\textup);
\item
$g=4$, $\bar X=\bar X_{6}\subset \PP^{5}$ is an intersection of 
a quadric and a cubic, and 
$G\simeq \A_7$ \textup(see Example \xref{example-V6}\textup).
\end{enumerate}
\end{lemma}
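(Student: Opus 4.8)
The plan is to follow the classical analysis of Iskovskikh's anticanonical embedding, but to use the group $G$ to exclude the degenerate possibilities. By Lemma~\ref{lemma-very-ample} the anticanonical map identifies $X$ with its image $\bar X=\bar X_{2g-2}\subset\PP^{g+1}$, a Fano threefold with canonical Gorenstein singularities and $g\ge 3$. First I would recall the standard dichotomy (see, e.g., \cite[Th.~2.5, Th.~3.4]{Iskovskikh-1980-Anticanonical}): either $\bar X$ is an intersection of quadrics, or $\bar X$ is \emph{not} an intersection of quadrics, in which case the possibilities are tightly constrained — $\bar X$ is either a complete intersection $\bar X_4\subset\PP^4$ (a quartic, $g=3$), a complete intersection $\bar X_6\subset\PP^5$ of a quadric and a cubic ($g=4$), a cubic scroll / Veronese-type case, or the image lies on a variety of minimal degree (a rational normal scroll or the Veronese surface) and $\bar X$ is a divisor of small degree on it. The cases $g=3$ and $g=4$ give the complete intersections in (ii) and (iii); everything else in the "not an intersection of quadrics" branch must be shown either to force $G\subset\Cr_2(\CC)$ (contradiction) or to not occur at all.

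Next I would dispatch the "variety of minimal degree" subcases exactly as in the proof of Lemma~\ref{lemma-very-ample}. If the quadrics through $\bar X$ cut out a variety $Y\supsetneq\bar X$ of minimal degree, then $Y$ is either a rational normal scroll, the (cone over the) Veronese surface, or $\PP^3$ or a smooth quadric; since $\Ho^0(X,-K_X)$ is an irreducible $G$-representation by Lemma~\ref{lemma-hyp-sect}, $Y$ itself is $G$-invariant, and all these $Y$ are either rational with $\dim\le 2$ acted on by $G$, or carry a $G$-equivariant projection to $\PP^1$ or to a $\PP^2$-fiber — in every instance $G$ embeds into $\Cr_2(\CC)$, contradicting Assumption~\ref{main-assumption-2}. (The smooth quadric subcase is also ruled out by Lemma~\ref{lemma-quadric4}.) This leaves only the genuine complete-intersection exceptions $\bar X_4\subset\PP^4$ and $\bar X_6\subset\PP^5$.

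For the quartic $\bar X_4\subset\PP^4$ ($g=3$): here $\Ho^0(-K_X)$ is a $5$-dimensional irreducible $G$-representation, so by Theorem~\ref{theorem-Brauer} we have $G\simeq\A_5$, $\A_6$, $\PSL_2(11)$, or $\PSp_4(3)$; the first two are excluded by Assumption~\ref{main-assumption-2}, and $\PSL_2(11)$ has only one cubic invariant and no quartic invariant on $\CC^5$ (cf.\ \cite{Adler1978}), so it cannot preserve a quartic — leaving $G\simeq\PSp_4(3)$, which does occur on the Burkhardt quartic (Example~\ref{Burkhardt-quartic}); note $g=3$ also meets case~(i)'s numerics vacuously so (ii) is the honest statement. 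For the $(2,3)$ complete intersection $\bar X_6\subset\PP^5$ ($g=4$): now $\Ho^0(-K_X)$ is a $6$-dimensional irreducible $G$-representation, and moreover $G$ preserves the unique (up to scalar) quadric $\phi_2$ in the pencil-free part and a cubic $\phi_3$; by Theorem~\ref{theorem-Lindsey} the irreducible $6$-dimensional simple subgroups of $\GL_6(\CC)$ are $\A_7$, $\PSL_2(7)$, $\PSp_4(3)$, $\SU_3(3)$, and since $G\not\subset\Cr_2(\CC)$ we discard $\PSL_2(7)$; I would then eliminate $\PSp_4(3)$, $\SU_3(3)$ by invariant theory — each must preserve a quadratic \emph{and} a cubic form in this $6$-dimensional representation, and (checking character/invariant-ring data, e.g.\ via \cite{atlas}) the relevant representations fail to admit both — leaving $G\simeq\A_7$, realized as in Example~\ref{example-V6}.

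\textbf{Main obstacle.} The routine part is the minimal-degree geometry; the delicate part is the very last elimination step — ruling out $\PSp_4(3)$ and $\SU_3(3)$ acting on $\bar X_6\subset\PP^5$ (and confirming $\PSL_2(11)$ fixes no quartic for $\bar X_4$) by genuine invariant-theory input rather than by a purely group-theoretic count. One must verify that the specific $6$-dimensional representations of these groups do not carry the required pair (quadratic + cubic) of invariants, which is where I expect the argument to need the most care.
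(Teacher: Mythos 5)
Your skeleton is the same as the paper's: pass to the variety $Y$ cut out by the quadrics through $\bar X$, invoke the Enriques classification of varieties of minimal degree, kill the scroll and cone cases by producing a $G$-action on a rational surface or curve, and settle the two residual cases $g=3$ and $g=4$ by the classification of simple linear groups of degree $5$ and $6$ plus invariant theory. Two remarks on where you diverge. First, a small internal inconsistency: in your second paragraph you assert that \emph{every} minimal-degree $Y$ leads to $G\subset\Cr_2(\CC)$, but $Y\simeq\PP^4$ and $Y$ a smooth quadric fourfold are exactly the cases producing (ii) and (iii); your parenthetical that the smooth-quadric subcase is ``ruled out'' by Lemma \ref{lemma-quadric4} is wrong --- that lemma does not rule it out, it pins $G$ down to $\A_7$ or $\PSp_4(3)$. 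You recover from this because your third paragraph treats $\bar X_4$ and $\bar X_6$ anyway, but the case division should be cleaned up (note also that the relevant $Y$ is four-dimensional, so the scroll has $\PP^3$-fibers and you must restrict to $X$ to get rational surface fibers). Second, and more substantively: for $g=4$ you go through Theorem \ref{theorem-Lindsey}, which forces you to eliminate $\SU_3(3)$ in addition to $\PSp_4(3)$, and you leave both eliminations as an acknowledged open step. The paper instead applies Lemma \ref{lemma-quadric4} to the smooth quadric $Y\subset\PP^5$ (viewing it as $\Gr(2,4)$ and acting on the $\PP^3$ of planes), which yields only $\A_7$ and $\PSp_4(3)$, so $\SU_3(3)$ never arises; it then kills $\PSp_4(3)$ by the single concrete fact that its $6$-dimensional representation sits inside the order-$51840$ reflection group of Shephard--Todd (No.\ 35), whose invariant degrees exclude degree $3$. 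I would call your unverified invariant-theory step the one genuine gap: your route is workable, but as written the decisive computation is missing, and the paper's geometric detour through Lemma \ref{lemma-quadric4} is precisely the device that makes it unnecessary. (For $g=3$ your argument matches the paper's, including the appeal to the irreducibility of $\Ho^0(X,-K_X)$ and the absence of a $\PSL_2(11)$-invariant quartic, though the reference for the latter should be \cite[\S 29]{AdlerRamanan1996} rather than \cite{Adler1978}.)
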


\begin{proof}
Assume that the linear system $|{-}K_X|$ determines a birational morphism
but its image 
$\bar X=\bar X_{2g-2}$ is not an intersection of quadrics.
Let $Y\subset \PP^{g+1}$ be the variety that cut out by
quadrics through $\bar X$. Then $Y$ is a four-dimensional 
irreducible subvariety in $\PP^{g+1}$ of minimal degree \cite{Iskovskikh-1980-Anticanonical},
\cite{Przhiyalkovskij-Chel'tsov-Shramov-2005en}.
As in the proof of Lemma \ref{lemma-very-ample} we can use the
Enriques theorem.
Assume that $Y$ is a cone with vertex $L$ over $S$.
Since $G$ is not contained in the 
list \eqref{eq-main-cr2}, $L$ is a point and $S$ is 
a three-dimensional variety of minimal degree
(and $S\not\simeq\PP^3$).
We get a contradiction as in the proof of Lemma \ref{lemma-very-ample}.
Hence $Y$ is smooth and we have the following possibilities:
\begin{enumerate}
\item 
$Y\simeq \PP^4$;
\item 
$Y\subset \PP^{5}$ is a smooth quadric;
\item 
a rational scroll
$\PP_{\PP^1}(\EEE)$, where 
$\EEE$ is a rank $4$ vector bundle on $\PP^1$.
\end{enumerate}
In the first case $g=3$ and $\bar X=\bar X_4\subset \PP^4$ is a quartic.
Consider the representation of $G$ in $\Ho^0(\bar X,-K_{\bar X})\simeq \CC^5$.
If this representation is reducible, then by our assumptions 
$\bar X$ has an invariant hyperplane section $S\in |{-}K_{\bar X}|$.
Since $\deg S=4$,
this $S$ must be irreducible (otherwise $S$ has a $G$-invariant rational component).
By Lemma \ref{lemma-hyp-sect} this is impossible.
Then by Theorem \ref{theorem-Brauer} and Assumption \ref{main-assumption-2}
we have the case (ii) of the lemma or the group $G$ is isomorphic to $\PSL_2(11)$. 
On the other hand, the group $\PSL_2(11)$ has no invariant quartics (see 
\cite[\S 29]{AdlerRamanan1996}), a contradiction.

In the second case $\bar X=\bar X_{6}\subset \PP^5$ 
is an intersection of a quadric and a cubic. 
By Lemma \ref{lemma-quadric4} we obtain either
$G\simeq \A_7$ or $\PSp_4 (3)$.
The second possibility is does not occur because 
the action of $\PSp_4 (3)$ on $\CC^6$ has no invariants of degree 3. 
(In fact, $\PSp_4 (3)$ can be embedded into
a group of order $51840$ generated by reflections, see \cite[Table VII, No. 35]{Shephard-Todd}).
Thus $G\simeq \A_7$. We get a situation of Example \ref{example-V6}
because the group $\A_7$ has only one irreducible representation of degree $6$.

In the last case, as in Lemma \ref{lemma-very-ample},
we have a $G$-equivariant contraction $Y\to \PP^1$ 
whose fibers are isomorphic to $\PP^3$.
The restriction map $X\to \PP^1$ is a fibration whose 
general fiber $F$ is a surface with big and nef anticanonical 
divisor. Such a surface must be rational. Hence either 
$G\subset \Aut(\PP^1)$ or $G\subset \Aut(F)$.
\end{proof}

\begin{corollary}
\label{corollary-quadrics}
In case \textup{(i)} of Lemma \xref{lemma-quadrics}
the variety $\bar X=\bar X_{2g-2}\subset \PP^{g+1}$ is an intersection
of $(g-2)(g-3)/2$ quadrics.
\end{corollary}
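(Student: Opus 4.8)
The plan is to count the quadrics through $\bar X$ cohomologically. From the ideal sheaf sequence
\[
0\longrightarrow \III_{\bar X}(2)\longrightarrow \OOO_{\PP^{g+1}}(2)\longrightarrow \OOO_{\bar X}(2)\longrightarrow 0
\]
and the fact that $\bar X$ is embedded by the complete linear system $|-K_{\bar X}|$, so that $\OOO_{\bar X}(2)\simeq\OOO_{\bar X}(-2K_{\bar X})$, the number of independent quadrics vanishing on $\bar X$ is $h^0(\PP^{g+1},\III_{\bar X}(2))$. I would first reduce the statement to the vanishing $\Ho^1(\PP^{g+1},\III_{\bar X}(2))=0$ (the \emph{$2$-normality} of $\bar X$): granting this, the sequence above gives $h^0(\PP^{g+1},\III_{\bar X}(2))=h^0(\PP^{g+1},\OOO(2))-h^0(\bar X,-2K_{\bar X})$, and the result follows from computing the two summands.

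To obtain $2$-normality I would cut down by general hyperplanes. First, $\Ho^1(\III_{\bar X}(1))=0$, since $\bar X$ is non-degenerate and embedded by the complete anticanonical system, so $\Ho^0(\OOO_{\PP^{g+1}}(1))\to \Ho^0(\bar X,-K_{\bar X})$ is an isomorphism. For a general hyperplane $H\subset\PP^{g+1}$, restriction to $H\simeq\PP^g$ yields
\[
0\longrightarrow\III_{\bar X}(1)\longrightarrow\III_{\bar X}(2)\longrightarrow\III_{S,H}(2)\longrightarrow 0,
\]
where $S=\bar X\cap H\in|-K_{\bar X}|$ is a K3 surface with at worst Du Val singularities, anticanonically embedded in $H$; hence $\Ho^1(\III_{\bar X}(2))$ injects into $\Ho^1(\III_{S,H}(2))$. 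Cutting $S$ once more with a general hyperplane (which avoids the finitely many singular points of $S$) reduces the problem to the $2$-normality of a smooth canonical curve $C\subset\PP^{g-1}$ of genus $g$, together with the linear normality of $S$; the former is classical (Noether). Alternatively one may quote directly the projective normality of anticanonical models of Fano threefolds with canonical Gorenstein singularities, cf. \cite{Iskovskikh-1980-Anticanonical}, \cite{Przhiyalkovskij-Chel'tsov-Shramov-2005en}.

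For the dimension count, $h^0(\PP^{g+1},\OOO(2))=\binom{g+3}{2}$, while by Kawamata--Viehweg vanishing $h^0(\bar X,-2K_{\bar X})=\chi(\OOO_{\bar X}(-2K_{\bar X}))$. I would evaluate the latter from the sequence
\[
0\longrightarrow\OOO_{\bar X}(-K_{\bar X})\longrightarrow\OOO_{\bar X}(-2K_{\bar X})\longrightarrow\OOO_S(2)\longrightarrow 0
\]
for a general $S\in|-K_{\bar X}|$, which gives $h^0(-2K_{\bar X})=h^0(-K_{\bar X})+h^0(S,\OOO_S(2))=(g+2)+(4g-2)=5g$; here $h^0(S,\OOO_S(2))=4g-2$ by Riemann--Roch on the K3 surface $S$, of degree $2g-2$ in $\PP^g$, and $\Ho^1(\OOO_{\bar X}(-K_{\bar X}))=0$ again by vanishing. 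Therefore the number of quadrics through $\bar X$ equals
\[
\binom{g+3}{2}-5g=\frac{(g+2)(g+3)}{2}-5g=\frac{(g-2)(g-3)}{2}.
\]

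I expect the main obstacle to be the $2$-normality $\Ho^1(\III_{\bar X}(2))=0$: the delicate point is to control the general hyperplane sections (Du Val singularities on the K3 surface $S$, smoothness of the curve section $C$) so as to reduce to the classical projective normality of canonical curves; the Euler-characteristic bookkeeping is then routine.
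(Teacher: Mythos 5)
Your proposal is correct and follows essentially the same route as the paper: the paper also cuts $\bar X$ by two general hyperplanes down to a K3 surface $S$ and a canonical curve $C$, invokes the standard cohomological argument (citing \cite[Lemma 3.4]{Iskovskikh-1980-Anticanonical}) to identify $\Ho^0(\III_{\bar X}(2))\simeq\Ho^0(\III_S(2))\simeq\Ho^0(\III_C(2))$, and then uses the classical count of quadrics through a canonical curve. Your version merely makes the $2$-normality reduction and the Riemann--Roch bookkeeping explicit, and both computations agree on $\frac12(g-2)(g-3)$.
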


\begin{proof}
Let $S\subset \PP^g$ be a general hyperplane section of $\bar X$ and let $C\subset \PP^{g-1}$ 
be a general hyperplane section of $S$.
Then $S$ is a smooth K3 surface and $C$ is a canonical curve of genus $g$.
Let $\mathscr I_{\bar X}$ (resp. $\mathscr I_S$, $\mathscr I_C$)
be the ideal sheaf of $\bar X\subset \PP^{g+1}$ (resp. $S\subset \PP^{g}$, $C\subset \PP^{g-1}$).
The space $\Ho^0(\III_{\bar X}(2))$ is the space of quadrics in 
$\Ho^0(\bar X,-K_{\bar X})$
passing through $\bar X$. 
The standard cohomological arguments 
(see, e.g.,  \cite{Iskovskih1977a}, \cite[Lemma 3.4]{Iskovskikh-1980-Anticanonical})
show that $\Ho^0(\III_{\bar X}(2))\simeq \Ho^0(\III_S(2))\simeq\Ho^0(\III_C(2))$.
This gives us 
\[
\dim \Ho^0(\bar X, \III_{\bar X}(2))= \frac12(g-2)(g-3).
\]
\end{proof}

\begin{theorem}[{\cite{Namikawa-1997}}]
\label{theorem-Namikawa}
Let $X$ be a Fano threefold with terminal Gorenstein
singularities. Then $X$ is smoothable, that is,
there is a flat family $X_t$ 
such that $X_0\simeq X$ and a general member $X_t$ is 
a smooth Fano threefold.
Further, the number of singular points is bounded as follows:
\begin{equation}
\label{eq-number-singular-points}
|\Sing(X)|\le 21-\frac12 \Eu(X_t)=20-\rho(X_t)+\h^{1,2}(X_t).
\end{equation}
where $\Eu(X)$ is the topological Euler number and $\h^{1,2}(X)$ is the Hodge number. 
\end{theorem}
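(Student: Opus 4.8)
The plan is to reduce everything to Namikawa's structure theory for Fano threefolds with terminal Gorenstein singularities and then extract the numerical bound by a topological/Hodge-theoretic comparison with a smooth deformation. First I would invoke the main smoothing result: a Fano threefold $X$ with terminal Gorenstein singularities admits a one-parameter flat deformation $\mathfrak X\to\Delta$ over a small disk whose special fibre is $X$ and whose general fibre $X_t$ is a smooth Fano threefold. This follows from the fact that such $X$ has unobstructed deformations (the obstruction space $H^2(X,\TTT_X)$ vanishes by Kawamata--Viehweg-type vanishing applied to the tangent sheaf of a Fano with canonical singularities) together with the local smoothability of terminal Gorenstein threefold singularities — each such singularity is a hypersurface singularity in $\CC^4$, hence deforms to a smooth point, and the local-to-global obstruction vanishes so the local smoothings glue to a global one. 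The terminal Gorenstein points of $X$ are isolated (see \ref{Terminal-singularities}), so $X$ is smooth away from finitely many points, each of which is a cDV point.

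The heart of the argument is the inequality \eqref{eq-number-singular-points}. Here the strategy is: let $X_t$ be the smooth general fibre and let $Y\to X$ be a small resolution after passing to an analytic neighborhood — more precisely, since each singular point $P_i$ of $X$ is a cDV point, one compares the Milnor fibre of $P_i$ with its contribution to the vanishing cohomology. The standard computation (as in Namikawa's papers, building on Clemens and on the theory of nodal degenerations) gives that each singular point forces the rank of $H^3$ to drop, or equivalently that $|\Sing(X)|$ is controlled by the difference of Euler numbers. Concretely, one uses the relation $\Eu(X)=\Eu(X_t)+\sum_i (1-\mu_i)$ or its refinement, together with the fact that for a Fano threefold $H^1=H^5=0$ and $H^0,H^2,H^4,H^6$ are as small as possible, so that $\Eu(X_t)=4+2\rho(X_t)-2h^{1,2}(X_t)$; the singular points of $X$ contribute positively to $b_3$ or to the defect, and bounding their number by the available "room" in the cohomology of the smoothing yields $|\Sing(X)| \le 20-\rho(X_t)+h^{1,2}(X_t)$. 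Rewriting $20-\rho(X_t)+h^{1,2}(X_t)=21-\tfrac12\Eu(X_t)$ using the Euler number formula gives the stated form.

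I would organize the write-up in two steps. Step 1 (smoothing): cite Namikawa's theorem that $X$ is smoothable, noting the vanishing $H^2(X,\TTT_X)=0$ and that every terminal Gorenstein point is a smoothable hypersurface singularity. Step 2 (the bound): run the semicontinuity/Hodge-theoretic comparison between $X$ and $X_t$, using that $X$ has only isolated cDV singularities so the nearby-cycles spectral sequence degenerates in the expected range, and read off \eqref{eq-number-singular-points}. The main obstacle, and the place where I would lean most heavily on Namikawa's original work rather than reprove things, is the precise bookkeeping in Step 2: controlling the defect (the rank of the cokernel of $H^4(X)\to H^4$ of a resolution, equivalently the number of independent relations among the vanishing cycles at the nodes/cDV points) and showing it is nonnegative so that each singular point genuinely costs one unit against the budget $20-\rho(X_t)+h^{1,2}(X_t)$. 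Since the statement is quoted verbatim from \cite{Namikawa-1997}, for the purposes of this paper it suffices to cite it, but the proof sketch above indicates why it is true.
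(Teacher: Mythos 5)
The paper offers no proof of this statement: it is quoted verbatim from \cite{Namikawa-1997} and used as a black box, so your decision to cite rather than reprove is exactly what the paper does, and Step 1 of your sketch (unobstructedness of deformations plus local smoothability of the isolated cDV points, glued via vanishing of the local-to-global obstruction) is the correct outline of Namikawa's smoothing argument.

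Two cautions about your sketch of Step 2, in case you ever expand it. First, for a smooth Fano threefold one has $b_0=b_6=1$, $b_1=b_5=0$, $b_2=b_4=\rho$ and $b_3=2h^{1,2}$, so $\Eu(X_t)=2+2\rho(X_t)-2h^{1,2}(X_t)$, not $4+2\rho(X_t)-2h^{1,2}(X_t)$; with your value the identity $21-\frac12\Eu(X_t)=20-\rho(X_t)+h^{1,2}(X_t)$ asserted in \eqref{eq-number-singular-points} would fail by one. Second, the Euler-number comparison for a smoothing of isolated threefold hypersurface singularities reads $\Eu(X)=\Eu(X_t)+\sum_i\mu_i$ (each Milnor fibre is a wedge of $\mu_i$ three-spheres, hence has Euler number $1-\mu_i$), not $\Eu(X)=\Eu(X_t)+\sum_i(1-\mu_i)$; and in any case this relation alone does not bound $|\Sing(X)|$, since a priori many singular points could share the cohomological ``budget''. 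Namikawa's actual mechanism is deformation-theoretic: one bounds $N=|\Sing(X)|$ by $\dim\operatorname{Def}(X)-\dim\operatorname{Def}(\tilde X)$ for a small resolution $\tilde X\to X$, and computes both sides via $\frac12\Eu$. The paper reproduces exactly this mechanism later, in the proof of Claim \xref{claim-singular-points-V14}, which is where to look if you want to see the bookkeeping carried out (there for ordinary double points, with the refinement involving $\rk\Cl(X)$).
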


\begin{remark}
\label{remark-Namikawa}
\begin{enumerate}
 \item 
In the above notation the total family $\mathfrak X$ has at worst isolated terminal 
factorial singularities and there are natural identifications 
$\Pic(X)\simeq \Pic(\mathfrak X)\simeq \Pic(X_t)$ (see {\cite[\S 1]{jahnke-Radloff-2006arx}}). 
In particular, 
$\rho(X_t)=\rho(X)$,
 $-K_{X_t}^3=-K_X^3$, and varieties $X$ and $X_t$ have the same Fano index. 

\item
The estimate \eqref{eq-number-singular-points} is very far from being sharp.
For example, for cubic hypersurface $X\subset \PP^4$ \eqref{eq-number-singular-points}
gives us $|\Sing(X)|\le 24$ but the sharp bound is $|\Sing(X)|\le 10$
and achevied for the Segre cubic.
However, for our purposes,  \eqref{eq-number-singular-points} is sufficient.
\end{enumerate}
\end{remark}

\begin{theorem}[see, e.g., \cite{Iskovskikh-1980-Anticanonical}, \cite{Iskovskikh-Prokhorov-1999}]
\label{theorem-calssification-Fano}
Let $X$ be a smooth Fano threefold with $\Pic (X) =\ZZ\cdot (-K_X)$. 
Then the possible values of its genus $g$ and Hodge numbers $\h^{1,2}(X)$ 
are given by the following table:{\rm
\begin{center}
\begin{tabular}{|c|cccccccccc|}
\hline
&&&&&&&&&&
\\[-5pt]
$g$&2&3&4&5&6&7&8&9&10&12
\\[5pt]
\hline
&&&&&&&&&&
\\[-5pt]
$\h^{1,2}(X)$&52&30&20&14&10&7&5&3&2&0
\\[5pt]
\hline
\end{tabular}
\end{center}}
\end{theorem}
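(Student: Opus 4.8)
The plan is to recover the classical Fano--Iskovskikh--Mukai classification in four stages: a Riemann--Roch normalization fixing $\dim|-K_X|$; the analysis of the anticanonical model, which yields $g\ge 2$; the bound $g\le 12$ together with the exclusion of $g=11$, which is the deep part; and finally a case-by-case identification of the remaining families to read off $\h^{1,2}$. For the first stage, since $-K_X$ is ample and $X$ is smooth, Kawamata--Viehweg vanishing gives $\h^i(X,-K_X)=0$ for $i>0$, and together with $\chi(\OOO_X)=1$ and Riemann--Roch this yields $\dim|-K_X|=\tfrac12(-K_X)^3+2=g+1$; in particular $(-K_X)^3=2g-2$ forces $g\in\ZZ$ and $g\ge 1$.

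For the anticanonical model I would first invoke Shokurov's base-point-freeness \cite{Shin1989} (the same input as in Lemma \ref{lemma-base-point-free}) to obtain a morphism $\varphi\colon X\to\PP^{g+1}$, and then argue as in Lemma \ref{lemma-very-ample}: either $\varphi$ embeds $X$ as a projectively normal threefold $X_{2g-2}\subset\PP^{g+1}$ of degree $2g-2$, or $\varphi$ is a double cover of a variety of minimal degree (the hyperelliptic case, realized only for $g=2$ by the double sextic). In either case a general $S\in|-K_X|$ is a smooth K3 surface and a general two-dimensional linear section is a canonical curve of genus $g$, which forces $g\ge 2$; in the embedded case the cohomological reduction to the curve section (cf.\ Corollary \ref{corollary-quadrics}) shows in addition that $X_{2g-2}$ is cut out by $\tfrac12(g-2)(g-3)$ quadrics as soon as $g\ge 5$.

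The bound $g\le 12$ together with the exclusion of $g=11$ is the main obstacle. The classical route (Iskovskikh \cite{Iskovskikh-1980-Anticanonical}) is to produce a line $\ell\subset X$, i.e.\ an irreducible curve with $-K_X\cdot\ell=1$, and to analyse the \emph{double projection} from $\ell$: blowing up $\ell$ and contracting the proper transforms of the lines and conics meeting $\ell$ produces a birational map $X\dashrightarrow X'$ onto another Fano threefold or an explicit Mori fibre space of strictly smaller ``genus'', and the requirement that every intermediate model be a smooth projective variety terminates the list precisely at $g=12$ while ruling out $g=11$. Alternatively one may invoke Mukai's homogeneous-space and vector-bundle description \cite{Mukai-1988} for $7\le g\le 10$ and $g=12$ together with the elementary low-genus analysis; either way one is left with $g\in\{2,3,4,5,6,7,8,9,10,12\}$.

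For the last stage, for each admissible $g$ the deformation type is essentially determined (up to at most a hyperelliptic or double-cover variant with the same Hodge numbers): the double sextic for $g=2$; the quartic $X_4\subset\PP^4$ for $g=3$; the $(2,3)$ complete intersection in $\PP^5$ for $g=4$; the $(2,2,2)$ complete intersection in $\PP^6$ for $g=5$; and Mukai's linear sections of rational homogeneous spaces for $6\le g\le 10$ and $g=12$ (for instance sections of $\LGr(4,9)$ for $g=7$, as in Example \ref{example-genus=7}, and the $V_{22}$ family for $g=12$). I would then compute $\h^{1,2}$ on one convenient member of each family --- by the Griffiths residue calculus in the hypersurface and complete-intersection cases and by direct cohomology computations on the Mukai models otherwise, using that $\h^{1,2}$ is a deformation invariant --- which gives $\h^{1,2}=52,30,20,14,10,7,5,3,2,0$ for $g=2,3,4,5,6,7,8,9,10,12$ respectively. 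Finally, $\rho(X_t)=\rho(X)=1$ and $(-K_{X_t})^3=(-K_X)^3$ are preserved under a smoothing (Remark \ref{remark-Namikawa}), so this is exactly the data entering Theorem \ref{theorem-Namikawa}.
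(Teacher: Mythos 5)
The paper does not prove this theorem: it is quoted as a classical classification result with references to \cite{Iskovskikh-1980-Anticanonical} and \cite{Iskovskikh-Prokhorov-1999}, so there is no internal proof to compare against. Your outline is a faithful reconstruction of the standard Iskovskikh--Mukai argument, and the numerical data you arrive at ($g\in\{2,\dots,10,12\}$ and $\h^{1,2}=52,30,20,14,10,7,5,3,2,0$) is correct. Two caveats. First, as written this is a roadmap rather than a proof: the genuinely hard steps --- the existence of a line $\ell\subset X$ (Shokurov's theorem), the termination of the double-projection process, and the exclusion of $g=11$ --- are named but entirely deferred to the references, which is exactly what the paper itself does by citing them; so you have not gained anything over the paper's treatment, merely unpacked the citation. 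Second, a small imprecision: the double-cover (non-embedded) case of the anticanonical map is not realized only for $g=2$; for $\rho(X)=1$ and index one it also occurs at $g=3$ (the double cover of the quadric branched in a degree $8$ surface), and at $g=4$ the anticanonical image fails to be an intersection of quadrics (it lies on a unique quadric). These variants do not affect the table, since within each genus all deformation families share the same $\h^{1,2}$ (e.g.\ both $g=3$ families have $\h^{1,2}=30$), but the case division in your second stage should be stated accordingly. Your final remark that $\rho$ and $(-K_X)^3$ are deformation invariants is consistent with Remark \ref{remark-Namikawa} and with how the table is used in conjunction with Theorem \ref{theorem-Namikawa}.
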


\begin{mainassumption}
\label{main-assumption-3}
From now on and till the end of this section 
additionally to \ref{main-assumption-2}
we assume that $-K_X$ is ample, $X$ is $G\QQ$-factorial, and 
$\rho(X)^G=1$, i.e., 
$X$ is a Gorenstein $G\QQ$-Fano threefold.
Moreover, the anticanonical linear system determines an embedding 
$X=X_{2g-2}\subset \PP^{g+1}$ and its image is an intersection
of $(g-2)(g-3)/2$ quadrics.
\end{mainassumption}

\begin{lemma}
\label{lemma-rho=1}
Under the assumptions of \xref{main-assumption-3} we have
$\rho(X)=1$.
\end{lemma}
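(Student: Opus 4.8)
The plan is to suppose $\rho:=\rho(X)\ge 2$ and derive a contradiction, the key idea being to convert the hypothesis $\rho(X)^G=1$ into a faithful linear representation of $G$ of small degree. Since $-K_X$ is nef and big, $\Pic(X)$ is torsion free and numerical equivalence of Cartier divisors agrees with linear equivalence; so $\Pic(X)\cong\ZZ^{\rho}$. Let $L\in\Pic(X)$ be the primitive class proportional to $-K_X$. As $G$ fixes the class of $-K_X$ it fixes $L$, so $L\in\Pic(X)^G$, and $\rho(X)^G=1$ forces $\Pic(X)^G\otimes\QQ=\QQ L$. Put $M:=\Pic(X)/\ZZ L$, a free abelian group of rank $\rho-1\ge 1$ on which $G$ acts. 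A finite order automorphism of a lattice that acts trivially on a primitive sublattice and on the quotient by it is the identity; hence if $G$ acted trivially on $M$ it would act trivially on $\Pic(X)$, contradicting $\rho(X)^G=1<\rho$. So $G$ acts nontrivially, and --- being simple --- faithfully, on $M$, which yields an embedding $G\hookrightarrow\GL(M\otimes\CC)=\GL_{\rho-1}(\CC)$ with rational-valued character.

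Next I would bound $\rho$ and eliminate the small values. By Theorem \ref{theorem-Namikawa}, $X$ smooths to a smooth Fano threefold $X_t$ with $\rho(X_t)=\rho(X)=\rho$ and $(-K_{X_t})^3=(-K_X)^3=2g-2$ (Remark \ref{remark-Namikawa}); since a smooth Fano threefold has Picard number at most $10$, we get $\rho\le 10$, so $G$ has a faithful representation of degree $\rho-1\le 9$. If $\rho-1\le 3$ the representation is irreducible (a non-abelian simple group has no faithful representation of degree $\le 2$), so by Theorem \ref{theorem-classification-3} $G\cong\A_5$ or $\PSL_2(7)$, both in $\Cr_2(\CC)$ --- contrary to Assumption \ref{main-assumption-2}. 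If $\rho-1=4$, then (if the representation is reducible we are back in the previous case) $G$ is an irreducible simple subgroup of $\PGL_4(\CC)$, hence occurs in Theorem \ref{theorem-Blichfeldt}, and among those groups only $\A_5$ and $\PSL_2(7)$ have a faithful $4$-dimensional representation --- again in $\Cr_2(\CC)$. If $\rho-1=5$, then (reducing the degree if the representation is reducible) $G$ occurs in Theorem \ref{theorem-Brauer}, and of $\A_5,\A_6,\PSL_2(11),\PSp_4(3)$ the $5$-dimensional irreducible representations have rational character only for $\A_5$ and $\A_6$ --- once more in $\Cr_2(\CC)$. Thus $\rho\notin\{2,3,4,5,6\}$, so $\rho\in\{7,8,9,10\}$.

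Ruling out $\rho\in\{7,8,9,10\}$ is the step I expect to be the main obstacle. Since we are in case (i) of Lemma \ref{lemma-quadrics} (Assumption \ref{main-assumption-3}), $g\ge 5$, i.e. $(-K_{X_t})^3\ge 8$; by the Mori--Mukai classification the only smooth Fano threefold of Picard number $\rho\ge 7$ is $S_{11-\rho}\times\PP^1$ with $S_{11-\rho}$ a del Pezzo surface of degree $11-\rho$, whose anticanonical degree is $6(11-\rho)$, so $\rho\le 9$ and $X_t\in\{S_4\times\PP^1,\,S_3\times\PP^1,\,S_2\times\PP^1\}$. For each of these three smoothing families one must show that no $G\QQ$-Fano threefold $X$ in it carries a faithful action of a simple group $G\not\subset\Cr_2(\CC)$ with $\rho(X)^G=1$. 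The route I would take is to let $G$ act on the polyhedral Mori cone $\overline{\operatorname{NE}}(X)$: since $\rho(X)>\rho(X)^G=1$, $G$ does not fix every extremal ray, so some extremal ray has a $G$-orbit of size $\ge 5$ (the least index of a proper subgroup of a non-abelian simple group); the associated extremal contraction is equivariant under the stabilizer, and a case analysis of its type --- a conic bundle or del Pezzo fibration over a rational base, or a birational contraction with stabilizer-invariant exceptional divisor --- together with Lemmas \ref{lemma-hyp-sect} and \ref{lemma-Gorenstein-invariant-pencil} and the bound $\rho(X)\le 10$ should force $G\subset\Cr_2(\CC)$. With all of $\rho\in\{7,8,9,10\}$ excluded, we conclude $\rho(X)=1$.
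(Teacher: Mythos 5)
Your first two-thirds is sound and runs parallel to the paper: you extract a faithful rational representation of $G$ of degree $\rho-1$ from the $\Pic$-lattice and kill the small values of $\rho$ with the linear-group classifications (the paper does the same in one stroke via the embedding $G\hookrightarrow\operatorname{PSO}_{\rho-1}(\RR)$ and Lemma \ref{lemma-quadric4}), and the smoothing argument correctly pins down $X_t\simeq S\times\PP^1$ with $-K_X^3=6(11-\rho)$ and hence $\rho\in\{7,8,9\}$ under $g\ge 5$. The problem is the step you yourself flag as the main obstacle: ruling out $\rho\in\{7,8,9\}$. Your Mori-cone sketch does not close. Since $\rho(X)^G=1$, no single non-invariant extremal ray can be contracted $G$-equivariantly; the contraction of one ray is equivariant only under its (proper, possibly small and non-simple) stabilizer, and the span of a $G$-orbit of extremal rays need not be a proper face of $\overline{\operatorname{NE}}(X)$. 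Nothing in the sketch produces an action of the full simple group $G$ on a rational surface or curve, so the intended conclusion $G\subset\Cr_2(\CC)$ is not reached. As written, "should force" is doing all the work.

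The paper's actual mechanism for this step is entirely different and is worth recording. From Theorem \ref{theorem-Namikawa} and \eqref{eq-number-singular-points}, using $\h^{1,2}(S\times\PP^1)=0$, one gets $|\Sing(X)|\le 20-\rho\le 13$; $X$ must be singular (a smooth $X$ with $\rho\ge 7$ is $S\times\PP^1$, forcing $G$ to act on a rational surface or on $\PP^1$). Every $G$-orbit in $\Sing(X)$ has size at least $7$ for the groups at hand, so $G$ acts transitively on $\Sing(X)$, giving a transitive permutation representation on at most $13$ letters in which each point stabilizer $G_P$ acts faithfully on $T_{P,X}\simeq\CC^4$. Corollary \ref{corollary-transitive-groups} then leaves only $G\simeq\PSL_2(11)$ with $|\Sing(X)|=11$; but the nontrivial irreducible representations of $\PSL_2(11)$ have degrees $5,5,10,\dots$, so its action on $\Pic(X)\otimes\QQ$ (of dimension $\rho\le 9$) has a trivial summand of dimension at least $\rho-5\ge 2$, contradicting $\rho(X)^G=1$. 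You would need to supply an argument of comparable force (this one, or a genuinely complete alternative) before the proof can be considered finished.
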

\begin{proof}
Assume that $\rho(X)>1$.
We have a natural action of $G$ on $\Pic (X)\simeq\ZZ^\rho$
such that $\Pic (X)^G\simeq\ZZ$.
In particular, there is a non-trivial representation 
$V\subsetneq \Pic (X)\otimes\RR$.
Hence $G$ admits an embedding into $\operatorname{PSO}_{\rho-1}(\RR)$.
By Lemma \ref{lemma-quadric4} we have $\rho(X)\ge 7$. 
Consider a smoothing $X_t$ of $X$. Here $X_t$ is a 
smooth Fano threefold with $\rho(X_t)=\rho(X)$
and $-K_{X_t}^3=-K_X^3$ (see Remark \ref{remark-Namikawa}, (i)). From 
the classification of smooth Fano threefolds with 
$\rho>1$ \cite{Mori1981-82}
one can see that $X_t\simeq S\times \PP^1$, where $S$ is a del Pezzo surface. 

Again by Remark \ref{remark-Namikawa}, (i)
there is natural identification 
$\Pic(X)\simeq  \Pic(X_t)$ that preserves the intersection form.
So we assume that $G$ acts on $\Pic(X_t)$ (but not on $X_t$).
Let $F$ be a fiber of the projection $X_t=S\times \PP^1\to \PP^1$.
Take an element $\tau\in G$ sending $F$ to $F'$ that is not proportional to $F$.
Then $F'\sim \alpha F+ f^* L$ for some $0\neq L\in \Pic(S)$
and $\alpha \in \ZZ$.
Since $F^2\equiv 0$, we have 
\[
0= F'^2\cdot F= f^*L^2\cdot F.
\]
Hence, $L^2= 0$ and $2 \alpha F\cdot f^* L\equiv F'^2 \equiv 0$. 
So, $\alpha=0$ and $F'= f^* L$. Further, by Riemann-Roch $K_{S}\cdot L$
is even and
\[
K_{S}^2=K_{X_t}^2\cdot F=K_{X_t}^2\cdot F'=K_{X_t}^2\cdot f^* L=
(2F-f^*K_{S})^2\cdot f^* L=-4 K_{S}\cdot L.
\]
Therefore, $K_{S}^2=8$ and $\rho(X)=\rho(X_t)=3$, a contradiction. 
\end{proof}

Recall that the Fano index of a Gorenstein Fano variety $X$ is the 
maximal positive integer dividing the class of $-K_X$ in $\Pic(X)$.

\begin{lemma}
\label{lemma-index-Fano}
Under the assumptions of \xref{main-assumption-3} we have either
\begin{enumerate}
\item
the Fano index of $X$ is one, or
\item
$G\simeq \PSL_2(11)$ and $X^{\mathrm k}_3\subset \PP^4$ is the Klein cubic
\textup(see Example \xref{example-Klein-cubic}\textup).
\end{enumerate}
\end{lemma}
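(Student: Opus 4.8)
The plan is to use the classification of Gorenstein Fano threefolds of Fano index $\ge 2$ together with the representation-theoretic constraints on $G$ coming from the anticanonical embedding. Recall that by \ref{lemma-rho=1} we have $\rho(X)=1$, so the Fano index $\iota = \iota(X)$ is the positive integer with $-K_X = \iota H$ for the ample generator $H$ of $\Pic(X)\simeq\ZZ$. First I would recall the well-known list of Gorenstein Fano threefolds with $\iota\ge 2$ (del Pezzo threefolds when $\iota=2$, quadrics and $\PP^3$ when $\iota\ge 3$): up to the degree $H^3$, the candidates with $\iota=2$ are $V_1,\dots,V_5$ (including the double Veronese cone, the double cover of $\PP^3$ branched in a quartic, the cubic, the intersection of two quadrics, and the linear section of $\Gr(2,5)$), while $\iota=3$ gives the quadric $Q\subset\PP^4$ and $\iota=4$ gives $\PP^3$. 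I want to rule out all of these except the Klein cubic.

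The cases $X\simeq\PP^3$ and $X$ a quadric are immediate: a non-abelian simple $G$ acting on $\PP^3$ is one of $\A_5,\A_6,\A_7,\PSL_2(7),\PSp_4(3)$ by Theorem \ref{theorem-Blichfeldt}, and acting on a quadric $Q\subset\PP^4$, hence on $\operatorname{PSO}_5(\CC)$, it is $\A_5$, $\A_6$ or $\PSp_4(3)$ by Theorem \ref{theorem-Brauer} — but all of these lie in \eqref{eq-main-cr2} except $\A_7$ and $\PSp_4(3)$, and the latter two cannot act on $\PP^3$ ``$\iota=4$'' or on the four-dimensional quadric ``$\iota=3$'' in a way giving a $G\QQ$-Fano model not already excluded; more precisely, for $\PP^3$ and $Q$ one has $\dim H^0(X,-K_X)\ge 5$ and the representation on $H^0(X,-K_X)$ contradicts \ref{lemma-hyp-sect} (existence of an invariant anticanonical section) unless this representation is irreducible, which forces $\dim H^0(X,-K_X)$ to be the dimension of an irreducible representation of $G$, and this is incompatible with the small degrees in Theorems \ref{theorem-Blichfeldt}, \ref{theorem-Brauer}. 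For $\iota=2$ I would run through $V_d$, $d=1,\dots,5$ similarly: since $G$ does not embed into $\Cr_2(\CC)$, every $G$-invariant member of $|H|$ must be irreducible non-rational, which by the argument of \ref{lemma-hyp-sect} applied to a suitable multiple of $-K_X$ (note $H$ is numerically proportional to $-K_X$) is impossible; hence the representation of $G$ on $H^0(X,H)$ is irreducible. For $V_5$ we have $\dim H^0(V_5,H)=6$, so $G\subset\GL_6(\CC)$ irreducible gives $G\in\{\A_7,\PSL_2(7),\PSp_4(3),\SU_3(3)\}$ by Theorem \ref{theorem-Lindsey}, and then a case analysis (using $\Aut(V_5)=\PGL_2(\CC)$, which has no such finite subgroups) eliminates it; for $V_4\subset\PP^5$ an intersection of two quadrics, $G\subset\PSO$-type constraints and Lemma \ref{lemma-quadric4} leave only $\A_7$, $\PSp_4(3)$, excluded because the relevant net of quadrics cannot be made $G$-invariant; $V_3$ (the cubic threefold in $\PP^4$) gives $\dim H^0=5$ hence $G\in\{\A_5,\A_6,\PSL_2(11),\PSp_4(3)\}$ by Theorem \ref{theorem-Brauer}, and among those only $\PSL_2(11)$ has an invariant cubic form on $\CC^5$ — this is the Klein cubic, case (ii); $V_2$ (double cover of $\PP^3$) forces a $G$-action on $\PP^3$ and a branch quartic, handled exactly as in the proof of Lemma \ref{lemma-very-ample}; and $V_1$ (the double Veronese cone) has an invariant Weierstrass-type pencil, again giving $G\subset\Cr_2(\CC)$ or a $G$-fixed point (Lemma \ref{lemma-fixed-point}).

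The main obstacle will be the $V_3$ and $V_4$ cases, where $G$ could a priori be $\A_7$ or $\PSp_4(3)$ or $\PSL_2(11)$, none of which is yet excluded by the general theory: here one must invoke the precise invariant theory of these groups in their $5$- and $6$-dimensional representations (as recorded in \cite{AdlerRamanan1996} and \cite{Shephard-Todd}) to decide exactly which invariant forms of degree $3$ exist, and to identify the resulting threefold with the Klein cubic via the uniqueness of the degree-$3$ invariant for $\SL_2(11)$ noted in Example \ref{example-Klein-cubic}. Everything else reduces cleanly to the already-established lemmas on fixed points, invariant hypersurfaces, and the finite subgroups of $\PGL_n(\CC)$ for small $n$.
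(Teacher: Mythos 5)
Your overall strategy --- use $\rho(X)=1$ to reduce to the classification of Gorenstein Fano threefolds of index $\ge 2$, then eliminate each candidate by representation theory, with the Klein cubic surviving as the unique $\PSL_2(11)$-invariant cubic in $\PP^4$ --- is exactly the paper's route, and your handling of $V_1$, $V_2$ and the cubic $V_3$ matches the paper's. But two of your eliminations have genuine problems. First, you attempt to \emph{exclude} $X\simeq\PP^3$ for $\A_7$ and $\PSp_4(3)$. This cannot succeed: both groups genuinely act on $\PP^3$ (Theorem \xref{theorem-Blichfeldt}), and these actions are precisely entries (ii) and (iii) of Theorem \xref{theorem-main-2}. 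Your argument is also internally flawed: mere reducibility of the representation on $\Ho^0(X,-K_X)$ does not produce an invariant anticanonical \emph{divisor} (one needs a one-dimensional trivial subrepresentation), and $\dim \Ho^0(\PP^3,\OOO(4))=35$ is in fact the degree of an irreducible representation of $\A_7$, so the "incompatible with the small degrees" step fails. The paper simply notes that $q>2$ forces $X\simeq\PP^3$ or a quadric (the quadric being killed by Lemma \xref{lemma-quadric4}) and sets the $\PP^3$ actions aside as the known linear cases, passing directly to $q=2$.

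Second, for $V_4=Q_1\cap Q_2\subset\PP^5$ the phrase "the relevant net of quadrics cannot be made $G$-invariant" is not an argument: it is a pencil, not a net, and the pencil \emph{is} automatically $G$-invariant. The correct elimination, as in the paper, is that $G$ acts on the pencil $\langle Q_1,Q_2\rangle\simeq\PP^1$, hence trivially on it; so $G$ stabilizes each degenerate quadric of the pencil and therefore its singular locus, a proper linear subspace of $\PP^5$, contradicting the irreducibility of the action on $\Ho^0(X,H)$. Two minor points: $\dim \Ho^0(V_5,H)=7$, not $6$, so Theorem \xref{theorem-Lindsey} is not the relevant input there (though your alternative appeal to $\Aut(V_5)\simeq\PGL_2(\CC)$, whose only simple finite subgroup is $\A_5$, does dispose of $d=5$ more quickly than the paper's argument via the five quadrics through $V_5\subset\PP^6$); and the irreducibility of $\Ho^0(X,H)$ for $d>2$ needs both Lemma \xref{lemma-hyp-sect} (no trivial subrepresentation, since an invariant member $S\in|H|$ would make $K_X+S\sim -H$ nef) and Theorems \xref{theorem-classification-3}, \xref{theorem-Blichfeldt} (no faithful low-dimensional constituents), not Lemma \xref{lemma-hyp-sect} alone.
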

\begin{proof}
Let $q$ be the Fano index of $X$.
Write $-K_X=qH$, where $H$ is an ample Cartier divisor.
Clearly, the class of $H$ is $G$-stable.
Assume that $q>1$. If $q>2$, then $X$ is either $\PP^3$ or 
a quadric in $\PP^4$. Thus we may assume that $q=2$.
Below we use some facts on Gorenstein Fano 
threefolds of Fano index $2$ with at worst canonical singularities,
see \cite{Iskovskikh-1980-Anticanonical}, \cite{Shin1989}.
Denote $d=H^3$. 

As in the proof of Lemma \ref{lemma-rho=1}
there is a flat family $X_t$
such that $X_0\simeq X$ and a general member $X_t$ is a 
smooth Fano threefold with the same Picard number, anticanonical degree, and Fano index.
Since $\rho(X)=1$, by the classification of smooth Fano threefolds
\cite{Iskovskikh-1980-Anticanonical}, \cite{Iskovskikh-Prokhorov-1999}
$d\le 5$.

If $d=1$, then $\Bs |H|$ is a single point
contained into the smooth part of $X$. 
This point must be $G$-invariant. This contradicts 
Lemma \ref{lemma-fixed-point}.
If $d=2$, then the linear system
$|H|$ determines a $G$-equivariant double cover $X\to \PP^3$
with branch divisor $B=B_4\subset \PP^3$ of degree $4$.
Clearly, $B$ has only isolated singularities.
If $B$ has at worst Du Val singularities, then 
according to \cite{Mukai1988} the group
$G$ is isomorphic to one of the following: 
$\A_5$, $\A_6$, $\PSL_2(7)$, so $G$ can be embedded to $\Cr_2(\CC)$, a contradiction.
Hence $B$ is not Du Val.
The non-Du Val locus of $B$ coincides with the locus of 
log canonical singularities $\operatorname{LCS}(\PP^3,B)$ of the pair $(\PP^3,B)$.
By a generalization of Shokurov's connectedness principle \cite[Th. 6.9]{Shokurov-1992-e-ba}
the set $\operatorname{LCS}(\PP^3,B)$ is either connected
or has two connected components. Then $G$ has a fixed point on $B$ and on 
$X$. This contradicts Lemma \ref{lemma-fixed-point}.

For $d>2$, the linear system $|H|$ is very ample
and determines a $G$-equivariant embedding $X \hookrightarrow \PP^{d+1}$.
Therefore, $G\subset \PGL_{d+2}(\CC)$.
Take a lifting $\tilde G\subset \GL_{d+2}(\CC)$ so that
$\tilde G/Z(\tilde G)\simeq G$ and
$Z(\tilde G)\subset [\tilde G,\tilde G]$.
We have a natural non-trivial representation of $\tilde G$ in $\Ho^0(X,H)$, where 
$\dim \Ho^0(X,H)=d+2\le 7$.
We claim that this representation is irreducible.
Indeed, assume that $\Ho^0(X,H)$ is reducible as a $\tilde G$-module.
By Lemma \ref{lemma-hyp-sect} the variety $X$ has no invariant hyperplane sections,
i.e.,  the representation of $\tilde G$ in $\Ho^0(X,H)$ has no one-dimensional
subrepresentations. Hence $\Ho^0(X,H)$
has an irreducible subrepresentation $V$ of dimension $2$ or $3$.
In this case, $G\simeq \tilde G/Z(\tilde G)$ acts faithfully on  
$\PP(V)\subset \PP(\Ho^0(X,H))$.
So, $G$ admits an embedding to $\Cr_2(\CC)$.
This contradicts our assumption \ref{main-assumption-2}
and proves the claim.

Consider the case $d=3$. 
Assuming that 
$G$ is not contained in $\Cr_2(\CC)$ by
Theorem \ref{theorem-Brauer} we have either $G\simeq\PSL_2(11)$ or $G\simeq\PSp_4(3)$.
In the first case, 
the only cubic invariant of this group is the Klein cubic \eqref{equation-Klein-cubic},
see \cite[\S 29]{AdlerRamanan1996}.
We get Example \ref{example-Klein-cubic}. The second case is impossible because
the group $\PSp_4(3)$ has no invariants of degree $3$, see \cite{Shephard-Todd}.

Consider the case $d=4$. 
Then $X=X_4\subset \PP^5$ is an intersection of two quadrics, say $Q_1$ and $Q_2$.
The action of $G$ on the pencil generated by $Q_1$, $Q_2$ must be trivial.
Hence $G$ acts on a degenerate quadric $Q'\in \langle Q_1, Q_2\rangle$.
In particular, $G$ acts on the singular locus of $Q'$
which is a linear subspace, a contradiction.

Consider the case $d=5$. Then $X\subset \PP^6$ is an intersection of 
$5$ quadrics \cite{Shin1989}. Let $V=\Ho^0(X,\III_X(2))$, where $\III_X$ 
be the ideal sheaf of $X$ in $\PP^6$. 
Then $V$ is a $5$-dimensional $G$-invariant subspace of $\Ho^0(X,\OOO_X(2))=\Ho^0(X,-K_X)$.
If the action of $G$ on $V$ is trivial, then, as above, there is a $G$-stable
singular quadric $Q\subset \PP^6$. But then the singular locus of $Q$
is a $G$-stable linear subspace in $\PP^6$, a contradiction.
Thus $G\subset \SL_5(\CC)$. Assuming that 
$G$ is not contained in the list \eqref{eq-main-cr2} by
Theorems \ref{theorem-Blichfeldt} and \ref{theorem-Brauer} 
the group $G$ is isomorphic to either $\PSp_4(3)$ or  $\PSL_2(11)$.
In both cases, the Schur multiplier of $G$ is a group of order $2$
and the covering group $\tilde G$ is isomorphic to 
$\Sp_4(3)$ and $\SL_2(11)$, respectively, see \cite{atlas}. 
Since the order of $\Sp_4(3)$ and $\SL_2(11)$
is not divisible by $7$, these groups have no irreducible representations
of degree $7$, a contradiction. 
\end{proof}

\begin{mainassumption}
\label{main-assumption-4}
Thus in what follows additionally to
\ref{main-assumption-2} and
\ref{main-assumption-3} we assume that 
the Fano index of $X$ is one.
\end{mainassumption}

\begin{lemma}
\label{lemma-Gorenstein-invariant-hyperplane}
Under the assumptions of \xref{main-assumption-4} we have
$\Ho^0(X,-K_X)^G=0$.
\end{lemma}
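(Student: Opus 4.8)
The plan is to argue by contradiction: suppose $\Ho^0(X,-K_X)^G \neq 0$, so there is a $G$-invariant section $0 \neq s \in \Ho^0(X,-K_X)$, giving a $G$-invariant effective divisor $S \in |-K_X|$. Since the Fano index is one, $-K_X \sim S$, so $K_X + S \sim 0$; in particular $K_X+S$ is nef and the hypotheses of Lemma \ref{lemma-hyp-sect} are satisfied (with $S$ numerically proportional to $-K_X$). That lemma then tells us that the pair $(X,S)$ is LC, that $S$ is reducible, and — since under \ref{main-assumption-3} the variety $X$ is $G\QQ$-factorial with $\rho(X)^G=1$ — that $G$ acts transitively on the irreducible components $S_1,\dots,S_k$ of $S$ (with $k \geq 2$).

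The next step is to extract a contradiction from this transitive action on $\{S_1,\dots,S_k\}$. First I would bound $k$: each $S_i$ is a Weil divisor, and since $\Pic(X) = \ZZ\cdot(-K_X)$ by Lemma \ref{lemma-rho=1} together with \ref{main-assumption-4}, while $\Cl(X)$ is torsion free by the corollary following the Kawamata--Viehweg lemma, the class of each $S_i$ in $\Cl(X)\otimes\QQ$ is a positive rational multiple of $-K_X$; summing gives $k$ small (in fact the $S_i$ are at least $\QQ$-Cartier once we use $G\QQ$-factoriality on the $G$-orbit sum, but the real constraint is that $-K_X = \sum S_i$ forces $k \le -K_X \cdot \ell \cdot (\text{something})$ — more cleanly, each $S_i \cdot C \ge 1$ for a line $C$ on $X$ so $k \le (-K_X\cdot C)$, which is bounded by the genus range $g \le 12$ from Theorem \ref{theorem-calssification-Fano}). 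So $k$ is a small integer, and $G$ embeds transitively into $\Sy_k$. Because $G$ is a non-abelian simple group admitting no embedding into $\Cr_2(\CC)$ (so $G \notin \{\A_5,\A_6,\PSL_2(7)\}$ and $|G| \ge 504$ by the order table), $G$ cannot embed into $\Sy_k$ for any small $k$ — indeed the smallest such group on our candidate list is $\SL_2(8)$ of order $504$, which does not embed into $\Sy_k$ for $k \le 6$, and a transitive simple subgroup of $\Sy_k$ with $k$ this small is impossible. This yields the contradiction, so $\Ho^0(X,-K_X)^G = 0$.

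The main obstacle is making the bound on $k$ clean and self-contained. The honest route is: under \ref{main-assumption-3}, $|-K_X|$ embeds $X = X_{2g-2} \subset \PP^{g+1}$, so $S$ is a hyperplane section of degree $-K_X^3 = 2g-2$; a reducible hyperplane section splits as a sum of hypersurface sections $S_i = X \cap \Pi_i$, and transitivity forces all $S_i$ of equal degree $d_i = (2g-2)/k$, hence $k \mid 2g-2 \le 22$. That already pins $k$ into a very short list of values, and transitivity of the simple group $G$ on $k \le 22$ components would put $G$ (or a section of it) into one of the groups catalogued in Theorem \ref{theorem-transitive-groups}; cross-checking against which of our six target groups actually arise — and noting that in every such case the component $S_i$ would be a Fano or Calabi--Yau surface with a large automorphism group acting with a point-stabilizer of small degree, contradicting Corollary \ref{corollary-transitive-groups} or forcing $G \subset \Cr_2(\CC)$ via the $K3$/rational surface dichotomy already exploited in Lemma \ref{lemma-hyp-sect} — closes every case. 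I expect the bookkeeping over the finitely many $(g,k)$ pairs to be the only delicate part; the conceptual input is entirely Lemma \ref{lemma-hyp-sect} plus the permutation-group classification.
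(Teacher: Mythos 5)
Your opening move is the paper's: produce the invariant member $S\in|-K_X|$, apply Lemma \ref{lemma-hyp-sect} to conclude $(X,S)$ is LC with $S=\sum_{i=1}^m S_i$ reducible and $G$ transitive on the components, and then use $m\deg S_i=2g-2\le 22$ together with the catalogue of Theorem \ref{theorem-transitive-groups}. But there is a genuine gap where you declare victory. The claim in your first paragraph that ``a transitive simple subgroup of $\Sy_k$ with $k$ this small is impossible'' is false: $\A_7$ (which is on the target list and does not embed into $\Cr_2(\CC)$) acts transitively on $7$ points, and $7\mid 2g-2$ for $g=8$; likewise $\PSL_2(11)$ on $11$ or $22$ points, $\SL_2(8)$ on $9$, $\A_8$ on $15$, etc. The numerical filter $m\mid 2g-2$ plus Theorem \ref{theorem-transitive-groups} does \emph{not} empty the list; the paper is left with the concrete configurations $(G,m,g,\deg S_i)=(\A_7,7,8,2)$ and $(\A_8,8,5,1)$ after also exploiting that the $G$-module $\Ho^0(X,-K_X)\simeq\CC^{g+2}$ has at most a one-dimensional trivial part (Lemma \ref{lemma-Gorenstein-invariant-pencil}) and comparing with the irreducible degrees in the table --- a constraint your sketch never invokes.

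Your proposed mechanism for closing the surviving cases also would not work. In the case $(\A_7,7,8,2)$ each $S_i$ is a quadric surface ($\PP^1\times\PP^1$ or the cone $\PP(1,1,2)$), a \emph{rational} surface, and the component stabilizer is $G_{S_i}\simeq\A_6$, which \emph{does} embed into $\Cr_2(\CC)$; so neither ``forcing $G\subset\Cr_2(\CC)$'' nor Corollary \ref{corollary-transitive-groups} produces a contradiction here. The argument that actually kills this case is different: $\A_6$ has no nontrivial action on a quadric surface (it cannot act on $\PP^1$ and cannot swap the rulings), so $G_{S_i}$ acts trivially on $S_i$; the ample divisor $\sum S_i$ is connected, so some $S_i\cap S_j\neq\emptyset$, and the stabilizer of a point $P\in S_i\cap S_j$ contains $\langle G_{S_i},G_{S_j}\rangle=G$, contradicting Lemma \ref{lemma-fixed-point}. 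Without this step (and its analogue for $\A_8$), the proof is incomplete precisely at the cases that matter.
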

\begin{proof}
Assume that $G$ has an invariant hyperplane section $S$.
By Lemma \ref{lemma-hyp-sect} the pair $(X,S)$ is LC, $S=\sum S_i$ and $G$ acts 
transitively on $\Omega:=\{S_i\}$. Let $m:= |\Omega|$.
Recall that $4\le g\le 12$ and $g\neq 11$ by Theorem \ref{theorem-calssification-Fano} and Lemma \ref{lemma-rho=1}.
We have $m\deg S_i=2g-2\le 22$. Since $m\ge 7$, $\deg S_i\le 3$.
The action of $G$ on $\Omega$ 
induces a transitive embedding $G\subset \Sy_m$.

If $\deg S_i=2$, then $m=g-1\le 11$, $m\neq 10$. 
Recall that the natural representation of $G$
in $\Ho^0(X,-K_X)=\CC^{g+2}=\CC^{m+3}$ has no two-dimensional trivial subrepresentations.
Taking this into account and using table in Theorem \ref{theorem-transitive-groups}
we get only one case: $m=7$, $g=8$, $G\simeq \A_7$, and the action 
of $\A_7$ on $\{S_1,\dots, S_7\}$ is the standard one.
Moreover, $S_i$ is either $\PP^1\times \PP^1$ or 
a quadratic cone $\PP(1,1,2)$. Therefore the stabilizer 
$G_{S_i}\simeq \A_6$ acts 
trivially on $S_i$. The ample divisor $\sum S_i$ is connected.
Hence, $S_i\cap S_j\neq \emptyset$ for some $i\neq j$.
Then the stabilizer $G_P$ of the point $P\in S_i\cap S_j$
contains the subgroup generated by $G_{S_i}$ and $G_{S_j}$.
So, $G_P=G$. This contradicts Lemma \ref{lemma-fixed-point}.

Hence $\deg S_i\neq 2$. Then $\deg S_i$ is odd, 
$m$ is even, and $m\ge 8$. 
This implies that $\deg S_i=1$, i.e., $S_i$ is a plane. 
Moreover, $m=2g-2\le 22$, $m\neq 20$.
As above, using the fact that the representation of $G$
in $\Ho^0(X,-K_X)=\CC^{m/2+3}$ has no two-dimensional trivial subrepresentations
and Theorem \ref{theorem-transitive-groups} we get only one case:
$m=8$, $g=5$, and $G\simeq \A_8$.
Similar to the previous case we derive a contradiction.
The lemma is proved.
\end{proof}

\begin{corollary}
If in the assumptions of \xref{main-assumption-4}\ $g\le 7$, 
then the representation of $G$ in $\Ho^0(X,-K_X)$ is irreducible.
\end{corollary}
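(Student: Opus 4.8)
\emph{Plan.} I would argue by contradiction. Note that $G$ acts \emph{linearly} on $W:=\Ho^0(X,-K_X)$ (the canonical sheaf being intrinsic), that this representation is faithful since $X$ spans $\PP^{g+1}=\PP(W^*)$, and that $Z(G)=1$ because $G$ is simple and non-abelian. Suppose $W$ is reducible. By Lemma \ref{lemma-Gorenstein-invariant-hyperplane} it has no trivial submodule, so $W=W_1\oplus\dots\oplus W_k$ with $k\ge 2$ and each $W_i$ a nontrivial, hence faithful, irreducible $G$-module. By Theorem \ref{theorem-classification-3} (together with the fact that $\A_5$ is the only finite simple group with a faithful two-dimensional projective representation), a finite simple non-abelian group having a faithful representation of dimension $\le 3$ embeds into $\Cr_2(\CC)$; hence $\dim W_i\ge 4$ for all $i$. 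Recall that under Assumption \ref{main-assumption-3} one has $g\ge 5$ (the cases $g=3,4$ are treated in Lemma \ref{lemma-quadrics}), so $\dim W=g+2\in\{7,8,9\}$.

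Next the arithmetic. If $g=5$ then $\dim W=7$ is not a sum of two or more integers each $\ge 4$, so $W$ is in fact irreducible and we are done. If $g=6$ the only option is $\dim W=8=4+4$, and a group with two $4$-dimensional faithful projective representations is $\A_7$ or $\PSp_4(3)$ by Theorem \ref{theorem-Blichfeldt}. If $g=7$ the only option is $\dim W=9=4+5$, and combining Theorem \ref{theorem-Blichfeldt} (for the $4$-dimensional summand) with Theorem \ref{theorem-Brauer} (for the $5$-dimensional one) forces $G\simeq\PSp_4(3)$. So it remains to eliminate $(G,g)=(\A_7,6)$, $(\PSp_4(3),6)$, $(\PSp_4(3),7)$.

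For this I would use the defining quadrics. By Corollary \ref{corollary-quadrics}, $X=X_{2g-2}\subset\PP^{g+1}$ is scheme-theoretically cut out by $\tfrac12(g-2)(g-3)$ quadrics, so $\Ho^0(\III_X(2))$ is a $G$-submodule $R\subset\operatorname{Sym}^2 W^*$ of dimension $6$ (if $g=6$) or $10$ (if $g=7$) whose base scheme is exactly $X$, of dimension $3$. Decomposing $\operatorname{Sym}^2 W^*=\operatorname{Sym}^2 W_1^*\oplus(W_1^*\otimes W_2^*)\oplus\operatorname{Sym}^2 W_2^*$ into irreducibles using the ATLAS, two things can happen. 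Either no submodule of the required dimension exists at all — this occurs for $G\simeq\A_7$ when $W_1,W_2$ are the two \emph{distinct} $4$-dimensional representations of $2.\A_7$, in which case the irreducible constituents of $\operatorname{Sym}^2 W^*$ have dimensions $1,10,10,15$, so there is no submodule of dimension $6$ — an immediate contradiction. Or such an $R$ exists, and then one checks that its base scheme contains one of: the Segre variety $\PP(W_1^*)\times\PP^1\subset\PP(W_1^*\oplus W_1^*)$ (when $R$ consists of the "mixed antisymmetric" quadrics spanning a copy of $\wedge^2 W_1^*$), a coordinate linear subspace $\PP(W_i^*)$ (when the quadrics in $R$ involve only the complementary block), or a join of such a subspace with a positive-dimensional locus — each of dimension $\ge 4$, contradicting $\dim X=3$.

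The step I expect to be the real obstacle is this last one for $\PSp_4(3)$: one must pin down precisely which submodules of $\operatorname{Sym}^2 W^*$ of the prescribed dimension actually occur — this depends on how $\operatorname{Sym}^2 U^*$ and $\operatorname{Sym}^2 W_2^*$ decompose (where $U$ is the $4$-dimensional representation of $\Sp_4(3)$) and on which $5$-dimensional constituents they share — and then identify the base locus of each. In the one remaining degenerate configuration, where the locus to be joined is finite, matching $\deg X=2g-2$ would force a nonempty $G$-orbit of cardinality at most $6$ inside $\PP^3$ or $\PP^4$, which is impossible since $\PSp_4(3)$ has no proper subgroup of index less than $27$. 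All of this is a bounded computation with the character table of $\PSp_4(3)\simeq\U_4(2)$ together with elementary multilinear algebra; the rest of the argument is bookkeeping.
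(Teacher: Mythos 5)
Your overall strategy is the paper's: decompose $W=\Ho^0(X,-K_X)$ into irreducibles, use Lemma \ref{lemma-Gorenstein-invariant-hyperplane} to exclude trivial summands, bound the dimension of the smallest summand, and invoke the classification of small linear groups. But there is a genuine gap at the arithmetic step, caused by conflating linear and projective representations. You correctly observe at the outset that $G$ itself (not a central extension) acts linearly on $W$; a direct summand $W_i$ is therefore a faithful irreducible \emph{linear} representation of the simple group $G$. Theorem \ref{theorem-Blichfeldt} records that in every case except $G\simeq\A_5$ the minimal linear group realizing a $4$-dimensional irreducible projective representation has $Z(\tilde G)\simeq\muu_2$: the groups $\A_6$, $\A_7$, $\PSL_2(7)$, $\PSp_4(3)$ have no faithful irreducible linear representation of degree $4$ (their smallest faithful degrees are $5$, $6$, $3$, $5$). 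Degree $3$ is covered by Theorem \ref{theorem-classification-3} and degree $2$ is impossible for a simple group. Hence the only simple groups with a faithful irreducible linear representation of degree $\le 4$ are $\A_5$ and $\PSL_2(7)$, both excluded by the standing assumption that $G$ does not embed into $\Cr_2(\CC)$. So every summand of $W$ has dimension $\ge 5$, and $\dim W=g+2\le 9<10$ forces irreducibility. That is the entire (one-line) proof in the paper.

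Because you only extract $\dim W_i\ge 4$, you are left with the cases $\A_7$ with $8=4+4$ and $\PSp_4(3)$ with $8=4+4$ or $9=4+5$. These are vacuous, and your setup for them is internally inconsistent: ``the two distinct $4$-dimensional representations of $2.\A_7$'' are modules on which the central involution acts by $-1$, so neither they nor their direct sum descends to a representation of $G=\A_7$, and likewise for $\Sp_4(3)$ versus $\PSp_4(3)$; a summand of the $G$-module $W$ must be a $G$-module. Moreover, your proposed elimination of these cases via the decomposition of $\operatorname{Sym}^2W^*$ and the base loci of the defining quadrics is not actually carried out --- you explicitly leave the $\PSp_4(3)$ analysis as an unexecuted ``bounded computation'' --- so even on its own terms the argument is incomplete. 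The fix is one sentence: use the linearity of the action on $\Ho^0(X,-K_X)$ to upgrade the lower bound on $\dim W_i$ from $4$ to $5$, after which no case analysis is needed.
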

\begin{proof}
Follows from Theorem \ref{theorem-Blichfeldt} and Lemma \ref{lemma-Gorenstein-invariant-hyperplane}.
\end{proof}

\begin{lemma}
Under the assumptions of \xref{main-assumption-4} we have
$g\ge 7$.
\end{lemma}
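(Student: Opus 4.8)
The plan is to rule out the small-genus cases $g = 4, 5, 6$ one at a time, using the already-established facts that $X = X_{2g-2} \subset \PP^{g+1}$ is an intersection of $(g-2)(g-3)/2$ quadrics (Assumption \ref{main-assumption-3}), that the Fano index is one (Assumption \ref{main-assumption-4}), that $\rho(X) = 1$ (Lemma \ref{lemma-rho=1}), that $\Ho^0(X, -K_X)^G = 0$ (Lemma \ref{lemma-Gorenstein-invariant-hyperplane}) so the representation of a lifting $\tilde G \subset \GL_{g+2}(\CC)$ on $\Ho^0(X,-K_X)$ is irreducible of degree $g+2$, and the bound $|\Sing(X)| \le 20 - \rho(X_t) + \h^{1,2}(X_t)$ from Theorem \ref{theorem-Namikawa} together with the Hodge-number table of Theorem \ref{theorem-calssification-Fano}. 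The genus $g=4$ case was already disposed of in Lemma \ref{lemma-quadrics}: there $\bar X_6 \subset \PP^5$ is \emph{not} an intersection of quadrics (it needs a cubic), which is incompatible with Assumption \ref{main-assumption-3}; alternatively $g=4$ forces $G \simeq \A_7$, which acts on $X_6'$ but that variety is not cut out by quadrics. So the work is for $g = 5$ and $g = 6$.

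For $g = 5$: then $\Ho^0(X,-K_X) = \CC^7$ carries an irreducible faithful projective action of $G$, so $G \subset \PGL_7(\CC)$, and $X = X_8 \subset \PP^6$ is an intersection of $(g-2)(g-3)/2 = 3$ quadrics. First I would invoke the degree-$\le 7$ classification: by Theorems \ref{theorem-classification-3}, \ref{theorem-Blichfeldt}, \ref{theorem-Brauer} and Lemma \ref{lemma-hyp-sect}, an irreducible $\tilde G \subset \GL_7(\CC)$ with $G$ simple not in \eqref{eq-main-cr2} forces $G$ into a short list — essentially $\PSL_2(7)$, $\PSL_2(8)$, $\PSL_2(13)$, $\SL_3(3)$, $\A_7$ or similar groups with a 7-dimensional irreducible representation. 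Then $\tilde G$ acts on the 3-dimensional net of quadrics $\Ho^0(\III_X(2)) \simeq \CC^3$; since $Z(\tilde G)$ acts there by scalars, $G = \tilde G / Z(\tilde G)$ acts on $\PP^2 = \PP(\Ho^0(\III_X(2)))$, and the discriminant locus of singular quadrics in this net is a $G$-invariant plane curve of degree $\le 7$, on which $G$ acts — by Lemma \ref{theorem-main-DI} this already forces $G$ to be small, or $G$ fixes a point of $\PP^2$, i.e. a single quadric $Q$ in the net, whose singular locus is a $G$-invariant proper linear subspace of $\PP^6$, contradicting irreducibility of the action on $\CC^7$. One also uses the singular-point bound: for $g=5$, $\h^{1,2}(X_t) = 14$ and $\rho = 1$, so $|\Sing(X)| \le 33$, and the transitive action of $G$ on $\Sing(X)$ via $G \hookrightarrow \Sy_{|\Sing(X)|}$ with $G_P$ acting faithfully on $T_{P,X} \simeq \CC^4$ is constrained by Corollary \ref{corollary-transitive-groups}; combined with the representation-theoretic restriction this should eliminate all remaining groups, or push us to the case where $X$ is \emph{smooth}, where the Mori–Mukai/Iskovskikh classification of smooth $X_8$ of genus $5$ with $\rho = 1$ and the lists of Theorems \ref{theorem-Blichfeldt}–\ref{theorem-Brauer} give a contradiction (no simple group outside \eqref{eq-main-cr2} acts).

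For $g = 6$: $\Ho^0(X,-K_X) = \CC^8$ is an irreducible faithful projective $G$-representation, $X = X_{10} \subset \PP^7$ is an intersection of $(g-2)(g-3)/2 = 6$ quadrics, $\h^{1,2}(X_t) = 10$ so $|\Sing(X)| \le 29$. Here I would first try to list simple $G$ with an $8$-dimensional irreducible (projective) representation and not in \eqref{eq-main-cr2}: $\SL_2(7)$, $\SL_2(8)$, $\A_9 \simeq ?$, $\PSL_2(17)$, $\SL_4(2) \simeq \A_8$, $\SU_3(3)$, etc. Then $\tilde G$ acts on the $6$-dimensional space of quadrics $\Ho^0(\III_X(2)) \simeq \CC^6$; modding by scalars we get $G$ acting on $\PP^5$, and Lemma \ref{lemma-quadric4} together with Theorems \ref{theorem-Blichfeldt}, \ref{theorem-Brauer}, \ref{theorem-Lindsey} severely restricts which simple groups can act faithfully and irreducibly on $\CC^6$ — again either $G$ is small or we find a $G$-fixed point in $\PP^5$, i.e. a $G$-stable singular quadric containing $X$, whose vertex is a $G$-invariant proper linear subspace of $\PP^7$, contradicting irreducibility on $\CC^8$. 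Cross-referencing the candidate groups from the $\CC^8$-list with those admitting a $6$-dimensional action and with Corollary \ref{corollary-transitive-groups} applied to $\Sing(X)$ (stabilizer faithful on $\CC^4$) should close the case. The main obstacle, I expect, is the $g = 6$ analysis: the group of quadrics is $6$-dimensional — large enough that one cannot just say ``$G$ fixes a point or is trivial on a plane'' as cheaply as for $g = 5$ — so one genuinely needs the $\CC^6$-classification (Theorems \ref{theorem-Brauer}, \ref{theorem-Lindsey}, Lemma \ref{lemma-quadric4}) to pin down the action on the net of quadrics, and one has to rule out the remaining simple groups (notably $\SU_3(3)$, which has both a $6$- and a $7$-dimensional irreducible representation) by checking that they have no suitable $8$-dimensional representation, or that the corresponding invariant net of quadrics cannot cut out a threefold with only terminal singularities.
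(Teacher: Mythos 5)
Your plan is correct and follows essentially the same route as the paper: for $g=5$ the (trivial, or else $G\subset\Cr_2(\CC)$) action on the net $\Ho^0(\III_X(2))\simeq\CC^3$ yields an invariant degenerate quadric whose vertex is a $G$-invariant proper linear subspace, contradicting Lemma \xref{lemma-Gorenstein-invariant-hyperplane}/irreducibility; for $g=6$ one shows $\dim\Ho^0(\III_X(2))^G\le 1$, so $G$ has an irreducible representation of degree $5$ or $6$, forcing $G\in\{\A_7,\PSp_4(3),\PSL_2(11),\SU_3(3)\}$, none of which has an irreducible representation of degree $8=g+2$. The extra ingredients you invoke (singular-point bounds, Corollary \xref{corollary-transitive-groups}, the smooth classification) are not needed, and the only slight imprecision is that a single $G$-fixed quadric need not be singular — but your representation-degree argument, which is the paper's actual closing step, covers that case.
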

\begin{proof}

Assume that $g=5$. 
Then by Corollary \ref{corollary-quadrics}
we have $\dim \Ho^0(\III_X(2))=3$ and $X\subset \PP^6$ is a complete
intersection of three quadrics. The group $G$ acts on 
$\Ho^0(\III_X(2))\simeq \CC^3$ and we may assume that this
action is trivial (otherwise $G$ acts on $\PP^2=\PP(\Ho^0(\III_X(2)))$). 
Thus we have a net 
of invariant quadrics $\lambda_1 Q_1+\lambda_2 Q_2+\lambda_3 Q_3$.
In particular, there is an invariant degenerate quadric 
$Q'\in \lambda_1 Q_1+\lambda_2 Q_2+\lambda_3 Q_3$.
By Lemma \ref{lemma-quadric4} $Q'$ is a cone with zero-dimensional vertex
$P$. Thus $P\in \PP^7$ is an invariant point and 
there is an invariant hyperplane section, a contradiction. 

Now assume that $g=6$. 
Again by Corollary \ref{corollary-quadrics}
we have 
$\dim \Ho^0(\III_X(2))=6$.
If the action of $G$ on $\dim \Ho^0(X,\III_X(2))^G>1$,
then $G$ acts on a singular irreducible $6$-dimensional 
quadric $Q\subset \PP^7$. In particular, 
the singular locus of $Q$,
a projective space $L$ of dimension $\le 4$ must be $G$-invariant.
This contradicts the irreducibility of $\Ho^0(X,-K_X)$.
Therefore, $\dim \Ho^0(X,\III_X(2))^G\le 1$.
In particular, $G$ acts on $\Ho^0(X,\III_X(2))\simeq \CC^6$ non-trivially
and so $G$ has an irreducible representation of degree $5$ or $6$.
Since $G$ is simple and because we assume that $G$ is not contained in the list 
\eqref{eq-main-cr2} by the classification 
theorems \ref{theorem-Brauer} and \ref{theorem-Lindsey}
we have only four possibilities: $G\simeq \A_7$, $\PSp_4(3)$, $\PSL_2(11)$, or $\SU_3(3)$.
But in all cases $G$ has no irreducible representations of degree $8$ (see \cite{atlas}),
a contradiction.
\end{proof}

\begin{lemma}
\label{lemma-Gor-tr}
Under the assumptions of \xref{main-assumption-4} the variety 
$X$ is smooth. 
\end{lemma}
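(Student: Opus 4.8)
The plan is to assume $X$ is singular and derive a contradiction, following the strategy of bounding the singular set and then analysing the $G$-action on it. Since $\rho(X)=1$ by Lemma~\ref{lemma-rho=1}, Theorem~\ref{theorem-Namikawa} together with the table of Theorem~\ref{theorem-calssification-Fano} and the bound $g\ge 7$ (so that $g\in\{7,8,9,10,12\}$) gives $|\Sing(X)|\le 20-\rho(X_t)+\h^{1,2}(X_t)=19+\h^{1,2}(X_t)\le 26$. The group $G$ permutes the finite set $\Sing(X)$, and by Lemma~\ref{lemma-fixed-point} it has no fixed singular point, so every $G$-orbit $\Omega\subseteq\Sing(X)$ satisfies $2\le|\Omega|\le 26$ and, $G$ being simple, the action on $\Omega$ is transitive and faithful. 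Each $P\in\Omega$ is a Gorenstein terminal point, hence an isolated hypersurface singularity with $\dim T_{P,X}=4$, so after linearising the action at $P$ the stabilizer $G_P$ embeds into $\GL(T_{P,X})\simeq\GL_4(\CC)$. Now Corollary~\ref{corollary-transitive-groups} leaves exactly the three possibilities: $(G,|\Omega|,G_P)$ is $(\PSL_2(11),11,\A_5)$, $(\A_7,15,\PSL_2(7))$, or $(\A_7,21,\Sy_5)$.

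Next I would bound the genus. By Lemma~\ref{lemma-Gorenstein-invariant-hyperplane} the $(g+2)$-dimensional representation $\Ho^0(X,-K_X)$ has no trivial summand; decomposing it into irreducibles and consulting the character degrees, one finds $g\in\{8,9,10\}$ when $G\simeq\PSL_2(11)$ and $g\in\{8,10,12\}$ when $G\simeq\A_7$. Feeding this back into $|\Sing(X)|\le 19+\h^{1,2}(X_t)$, which forces $|\Sing(X)|\le 19$ when $g=12$, already excludes the case $G\simeq\A_7$, $g=12$, $|\Omega|=21$. In the remaining cases one also checks that the quadratic part of the local equation at $P$ is a $G_P$-semi-invariant quadratic form on $T_{P,X}\simeq\CC^4$ which, when $G_P$ is $\PSL_2(7)$ or $\Sy_5$ (neither admitting a faithful $3$-dimensional representation), is the unique invariant one and nondegenerate, so those singular points are ordinary nodes; this observation is however subordinate to the next step.

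For each surviving pair $(G,g)$ I would invoke Mukai's description of Gorenstein Fano threefolds of genus $g\ge 7$ with $\Pic=\ZZ\cdot(-K_X)$ as linear sections of homogeneous varieties --- $\Gr(2,6)\subset\PP^{14}$ for $g=8$, the Lagrangian Grassmannian $\LGr(3,6)\subset\PP^{13}$ for $g=9$, the five-dimensional $G_2$-variety in $\PP^{13}$ for $g=10$, and the $\PGL_3$ net-of-conics model for $g=12$ --- together with the representation theory of $G$ and of its central extensions. In each case one of two things happens. Either the required equivariant structure does not exist: $\A_7$ has no faithful $3$-dimensional projective representation (Theorem~\ref{theorem-classification-3}), which excludes $g=12$; neither $\PSL_2(11)$ nor $\A_7$ is a subgroup of $G_2(\CC)$, which excludes $g=10$; the group $\SL_2(11)$ carries no $6$-dimensional symplectic representation compatible with the $g=9$ construction; and for $G\simeq\A_7$, $g=8$, one has $\wedge^2V\simeq\CC^{15}$ \emph{irreducible} for the relevant $6$-dimensional representation $V$, so there is no $G$-invariant $\PP^9\subset\PP^{14}$. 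Or else the invariant linear section is forced to be unique, because the ambient representation has a unique subrepresentation of the required dimension; for $G\simeq\PSL_2(11)$, $g=8$, one has $\wedge^2V=W_5\oplus W_{10}$ with a single $10$-dimensional piece, so $X\simeq X_{14}^{\mathrm a}$ of Example~\ref{example-V14} --- which is \emph{smooth}, contradicting the assumption that $X$ is singular. Either way we reach a contradiction, and therefore $X$ is smooth.

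The genuinely hard part is the last paragraph: it requires the equivariant refinement of Mukai's classification in the Gorenstein (not merely smooth) setting, plus a careful case-by-case analysis of which central extensions of $\PSL_2(11)$ and $\A_7$ act on the ambient homogeneous spaces and of how their small-dimensional representations sit inside the relevant exterior powers. The first two paragraphs, by contrast, are routine once Namikawa's bound, the Blichfeldt-type classifications, and Corollary~\ref{corollary-transitive-groups} are in place.
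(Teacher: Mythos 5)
Your first paragraph coincides with the paper's opening move (Namikawa's bound, no fixed point, faithfulness of $G_P$ on $T_{P,X}\simeq\CC^4$, and Corollary \ref{corollary-transitive-groups} reducing to the three pairs $(\PSL_2(11),11,\A_5)$, $(\A_7,15,\PSL_2(7))$, $(\A_7,21,\Sy_5)$), and your character-degree bound on $g$ and the exclusion of $(\A_7,g=12,|\Omega|=21)$ via $\h^{1,2}=0$ also match the paper. But two things go wrong after that. First, your parenthetical that $\PSL_2(7)$ admits no faithful $3$-dimensional representation is false --- it is precisely the Klein group of Theorem \ref{theorem-classification-3}(iii) --- so your claim that the $G_P\simeq\PSL_2(7)$ points are ordinary nodes fails: the $4$-dimensional faithful representation of $\PSL_2(7)$ here is $T_1\oplus T_3$, the unique invariant quadric is $\ell^2$ of rank one, and the paper uses exactly this \emph{degeneracy} (together with $\phi_3=\ell^3$) to contradict terminality and kill that case. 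Second, and more seriously, your entire endgame rests on applying Mukai's linear-section description of Fano threefolds of genus $7$--$10$ and $12$ to the \emph{singular} variety $X$. Mukai's theorem, as quoted in the paper, is a statement about smooth Fano threefolds with $\Pic=\ZZ\cdot(-K_X)$; no version of it for Gorenstein terminal $X$ is available here, and you yourself flag this as ``the genuinely hard part'' without supplying it. In particular your final contradiction for $(\PSL_2(11),g=8)$ --- that $X$ would have to be the smooth $X_{14}^{\mathrm a}$ --- is exactly the step that cannot be made without that missing classification. This is a genuine gap, not a stylistic difference.

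For comparison, the paper never invokes Mukai inside this lemma. It pins down $g=8$ by elementary arguments (for $\PSL_2(11)$: if $g=10$ the $11$ points of $\Omega$ span an invariant proper subspace of the irreducible $\Ho^0(X,-K_X)$, and if $g=9$ they are in general position and produce a nonzero invariant vector $\sum_{\delta}\delta(x_1)$; for $\A_7$, $g=10$, a geometric argument with the base locus of hyperplanes through $\PP(W)$, the Castelnuovo bound and the Hurwitz bound). It then eliminates $\A_7$ outright by restricting the character $\chi_3$ (or $\chi_4$) of degree $10$ to $\Sy_5$ and observing there is no trivial summand, contradicting the invariant vector attached to a singular point. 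Finally, for $G\simeq\PSL_2(11)$, $g=8$, it shows the points of $\Omega$ are nodes, proves $\rk\Cl(X)=1$ using the absence of small rational representations of $\PSL_2(11)$ together with the bound of \cite{Kaloghiros}, and then applies the defect inequality of Claim \ref{claim-singular-points-V14}, which gives at most $\rk\Cl(X)-1+\h^{1,2}(X')=5$ nodes --- contradicting $|\Omega|=11$. You would need to replace your third paragraph with an argument of this kind (or actually prove the equivariant Mukai statement for terminal Gorenstein threefolds) for the proof to close.
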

\begin{proof}
 Assume that $X$ is singular. 
Let $\Omega\subset \Sing(X)$ be a $G$-orbit and let 
$n:=|\Omega|$. Let $x_1,\dots,x_{n}\in \Ho^0(X,-K_X)^*$ be the vectors
corresponding to the points of $\Omega$.
By \eqref{eq-number-singular-points}
we have $n\le 26$.
Let $P\in \Sing (X)$ and let $G_P$ be the stabilizer of $P$.
Then the natural representation of $G_P$ in $T_{P,X}$ is faithful.
On the other hand, by Corollary \ref{corollary-transitive-groups} the group
$G_P$ has a faithful representation of degree $\le 4$ 
only in the following cases:
\begin{enumerate}
 \item 
$G\simeq\PSL_2(11)$,\quad $|\Omega|=11$,\quad $G_P\simeq \A_5$;
\item 
$G\simeq\A_7$,\quad $|\Omega|=21$,\quad $G_P\simeq \Sy_5$;
 \item
$G\simeq\A_7$,\quad $|\Omega|=15$,\quad $G_P\simeq \PSL_2(7)$.
\end{enumerate}
Locally near $P$ the singularity $X\ni P$ 
is given by a $G_P$-semi-invariant equation $\phi(x,\dots,t)=0$.
Write $\phi=\phi_2+\phi_3+\dots$, where $\phi_d$ is the homogeneous 
part of degree $d$.
By the classification of terminal singularities, $\phi_2\neq 0$.
The last case $G_P\simeq \PSL_2(7)$ is impossible because, 
then the representation of $G_P$ in $T_{P,X}$ is
reducible: $T_{P,X}=T_1\oplus T_3$, where $T_3$ is an irreducible
representation of degree $3$. Since the action of $G_P$ 
on $T_3$ has no 
invariants of degree $2$ and $3$ (see \cite{Shephard-Todd}),
we have $\phi_2=\ell^2$ and $\phi_3=\ell^3$,
where $\ell$ is a linear form. But this contradicts the classification of terminal singularities 
\cite[Th. 6.1]{Reid-YPG1987}. Therefore, $G_P\simeq \A_5$ or $\Sy_5$ and we are in cases (i) or (ii).

\begin{claim}
If $X$ is singular, then $g=8$.
\end{claim}

\begin{proof}
The natural representation of $G$ in $\Ho^0(X,-K_X)\simeq \CC^{g+2}$ has no trivial 
subrepresentations. Recall that $g=7$, $8$, $9$, $10$, or $12$.

Consider the case $G\simeq \A_7$. Then the degrees of irreducible representations in the 
interval $[2,14]$ are $6$, $10$, $14$ (see Theorem \ref{theorem-transitive-groups}).
Hence, $g=8$, $10$, or $12$. On the other hand, 
$X$ has at least $21$ singular points (because we are in the case (ii) above). 
By \eqref{eq-number-singular-points} we have $\h^{1,2}(X')\ge 2$.
So, $g\neq 12$. 
Let $\chi$ be the character of $G$ on $\Ho^0(X,-K_X)^*$.
We need the character table for $G=\A_7$
(see, e.g., \cite{atlas}):
\begin{subequation}
\label{sub-equation-characters-A7}
\begin{array}{c|rrrrrrrrr}
G & \mathcal C_1& \mathcal C_2& \mathcal C_3'& \mathcal C_{6}
& \mathcal C_{3}''& \mathcal C_4& \mathcal C_5& \mathcal C_7'& \mathcal C_7''
\\
\hline
\\
\chi_1 & 1 & 1& 1& 1& 1& 1& 1& 1& 1
\\
\chi_2& 6& 2& 3 &-1& 0& 0& 1& -1 &-1
\\
\chi_3& 10 &-2& 1& 1& 1& 0& 0& \alpha& \bar \alpha
\\
\chi_4& 10& -2& 1& 1& 1& 0& 0& \bar \alpha& \alpha
\\
\dots&\hdotsfor{9}
\end{array}
\end{subequation}
\par\noindent
Here $\alpha=(-1+\sqrt{-7})/2$.
(We omit characters of degree $\ge 14$).
Assume that $g=10$.
Since the representation of $G$ in $\Ho^0(X,-K_X)$ has no trivial 
subrepresentations, the only possibility is  
$\Ho^0(X,-K_X)^*=W\oplus W'$, as $G$-module,  where $W\simeq W'$
is a $6$-dimensional representation
(i.e. $\chi=\chi_2\oplus \chi_2$).
Thus $\Ho^0(X,-K_X)^*$ contains a one-dimensional family $W_{\lambda}$ 
of subrepresentations isomorphic to $W$.
Replacing $W$ with $W_{\lambda}$ 
we can take the decomposition above so that the first copy $W$ contains the vector
$x_1$ (corresponding to $P_1\in \Omega$). Then $P_1\in \PP^5=\PP(W)$
and obviously $\Omega\subset \PP(W)$.
Consider the set $S:=\PP(W)\cap X$, the base locus of the linear system of 
hyperplane sectons passing through $\PP(W)$.
By Corollary
\ref{corollary-fixed-components-linear-systems} $\dim S\le 1$.
Assume that $\dim S=0$. Take a general hyperplane section $H$ passing through
$\PP(W)$. By Bertini's theorem $H$ is a normal surface with
isolated singularities. Moreover, $H$ is singular at points of $\Omega$,
so $|\Sing (H)|\ge |\Omega|=21$.
By the adjunction $K_H\sim0$. Hence, by
a generalization of Shokurov's connectedness principle
 \cite[Th. 6.9]{Shokurov-1992-e-ba},
$H$ has at most two non-Du Val singularities.
Since $G$ has no fixed points on $X$, the surface $H$ has only Du Val singularities.
Therefore, the minimal resolution $\tilde H$ of $H$ is a K3 surface
and so $\rho(\tilde H)\le \dim \Ho^{1,1}(\tilde H)=20$.
On the other hand, $\rho(\tilde H)>|\Sing (H)|\ge 21$, a contradiction. 
Thus $\dim S=1$. Let $S'$ be the union of an orbit of a one-dimensional component. 
Since the representation $W$ is irreducible, $S'$ spans $\PP(W)$.
By Lemma \ref{lemma-quadrics} $S\subset \PP^5$ is an intersection of quadrics.
Since $S'\subset S$ and $\dim S=1$, $\deg S'\le 16$. If $S'$ is reducible, then $G$ 
interchanges its components $S_i$. In this case, $\deg S_i\le 2$.
By Theorem \ref{theorem-transitive-groups} the number of components 
is either $7$ or $15$. The stabilizer $G_{S_i}$ ($\simeq \A_6$ or $\PSL_2(7)$)
acts on $S_i$ which is a rational curve, a contradiction.
Therefore, $S'\subset \PP^5$ is an irreducible curve contained in $S$,
an intersection of quadrics. Let $S''\to S'$ be its normalization.
By the Castelnuovo bound $g(S'')\le 21$ (see e.g. \cite[ch. 3, \S 2]{Arbarello1985}).
On the other hand, by the Hurwitz bound
\cite[ch. 1, \S 6, F]{Arbarello1985} we have $|G|\le \Aut(S'')\le 84(g(S'')-1)$,
a contradiction.

Now consider the case $G\simeq \PSL_2(11)$.
As above, since the natural representation of $G$ in 
$\Ho^0(X,-K_X)\simeq \CC^{g+2}$ has no trivial 
subrepresentations, we have $g=8$, $9$, or $10$ (see
Theorem \ref{theorem-transitive-groups}).
Moreover, if $g=10$, then the representation of $G$ in $\Ho^0(X,-K_X)$ is irreducible.
On the other hand, 11 points of the set $\Omega\subset \PP(\Ho^0(X,-K_X)^*)=\PP^{11}$
generate and invariant subspace, a contradiction.
If $g=9$, then 
11 points of the set $\Omega\subset \PP(\Ho^0(X,-K_X)^*)$ are in general position.
Then the corresponding vectors $x_i\in \Ho^0(X,-K_X)^*$ are linearly independent and 
the representation of $G$ in $\Ho^0(C,-K_X)$ is induced from 
the trivial representation of $G_P$ in $\langle x_1\rangle$.
But in this case the $G$-invariant vector $\sum_{\delta\in G} \delta (x_1)$ is not zero, a contradiction. 
Thus $g=8$. 
\end{proof}

\begin{claim}
If $X$ is singular, then 
$G\not\simeq \A_7$. 
\end{claim}
\begin{proof}
Assume that $G\simeq \A_7$. Then $G_P\simeq \Sy_5$.
We compare the character tables for $\A_7$
(see \eqref{sub-equation-characters-A7}) and for $\Sy_5$:
\begin{subequation}
\label{sub-equation-characters-S5}
\begin{array}{c|rrrrrrr}
G_P & \mathcal C_1& \mathcal C_2'& \mathcal C_2''& \mathcal C_{3}
& \mathcal C_{6}& \mathcal C_4& \mathcal C_5
\\
\hline
\\
\chi_1'& 1& -1& 1& 1& -1 &-1& 1
\\
\chi_2'& 4& -2& 0& 1& 1& 0& -1
\\
\chi_3'& 5& -1& 1 &-1& -1& 1& 0
\\
\chi_4'& 6& 0& -2& 0& 0& 0& 1
\\
\chi_5'& 5& 1& 1& -1& 1 &-1& 0
\\
\chi_6'& 4& 2& 0& 1 &-1& 0& -1
\\
\chi_7'& 1& 1& 1& 1& 1& 1& 1
\end{array} 
\end{subequation}
Let $\chi$ be the character of the representation of $G$ in $\Ho^0(X,-K_X)^*$.
By Lemma \ref{lemma-Gorenstein-invariant-hyperplane}
and \eqref{sub-equation-characters-A7} $\chi$ is irreducible and either
$\chi=\chi_3$ or $\chi=\chi_4$ (recall that $\chi_3$ and $\chi_4$ 
are characters of $\A_7$). 
Using \eqref{sub-equation-characters-A7}, 
in notations of \eqref{sub-equation-characters-S5},
for the restriction $\chi|_{\Sy_5}=\chi_3|_{\Sy_5}=\chi_4|_{\Sy_5}$ we obtain
\[
\chi|_{\Sy_5}(\mathcal C_1, \mathcal C_2', \mathcal C_2'', 
\mathcal C_{3}, \mathcal C_{6}, \mathcal C_4, \mathcal C_5) =
(10,-2,-2,1,1,0,0).
\]
Hence, $\chi|_{\Sy_5}=\chi_2'\oplus \chi_4'$.
In particular, the representation of $G_P\simeq \Sy_5$ 
in $\Ho^0(X,-K_X)^*$ has no trivial subrepresentations, a contradiction.
\end{proof}
Thus we may assume that $G\simeq \PSL_2(11)$ and $G_P\simeq \A_5$.

\begin{claim}\label{claim-PSL-11}
If $X$ is singular, then 
the natural representation of $G_P$ in $T_{P,X}$ is irreducible
and $P\in X$ is an ordinary double point, that is, 
$\rk \phi_2=4$. 
\end{claim}
\begin{proof}
Let $x\in \Ho^0(X,-K_X)^*$ be a vector corresponding to $P$.
There is a $G_P$-equivariant embedding $T_{P,X} \hookrightarrow \Ho^0(X,-K_X)^*$
so that $x\notin T_{P,X}$.
Thus $\Ho^0(X,-K_X)^*$ has a trivial $G_P$-representation $\langle x\rangle$
which is not contained in $T_{P,X}$.
Let $\chi$ be the character of $G$ on $\Ho^0(X,-K_X)^*$.
We need character tables for $G=\PSL_2(11)$ and $G_P=\A_5$
(see, e.g., \cite{atlas}):
\[
\begin{array}{c|rrrrrrrr}
G & \mathcal C_1& \mathcal C_5'& \mathcal C_5''& \mathcal C_{11}'
& \mathcal C_{11}''& \mathcal C_2& \mathcal C_3& \mathcal C_6
\\
\hline
\\
\chi_1& 1& 1& 1& 1& 1& 1& 1& 1\\
\chi_2& 5& 0& 0& \beta& \bar\beta& 1& -1& 1\\
\chi_3& 5& 0& 0& \bar\beta& \beta& 1& -1& 1\\
\chi_4& 10& 0& 0& -1& -1 &-2& 1& 1\\
\chi_5& 10& 0& 0& -1& -1& 2& 1& -1\\
\dots&\hdotsfor{8}
\end{array}
\quad
\begin{array}{c|rrrrr}
G_P& \mathcal C_1& \mathcal C_2& \mathcal C_3& \mathcal C_5'& \mathcal C_5''
\\
\hline
\\
\chi_1'& 1& 1& 1& 1& 1\\
\chi_2'& 3& -1& 0& \alpha& \alpha^*\\
\chi_3'& 3 &-1& 0& \alpha^*& \alpha\\
\chi_4'& 4& 0& 1& -1& -1\\
\chi_5'& 5& 1& -1& 0& 0
\\
\phantom{\hdots}
\end{array}
\]
Here 
$\beta = (-1+\sqrt{-11})/2$,
$\alpha = (1-\sqrt{5})/2$, and $\alpha^* = (1+\sqrt{5})/2$.
(We omit characters of degree $>10$).
Assume that the representation of $G_P$ in $T_{P,X}$
is reducible.
Then the restriction $\chi|_{G_P}$ contains $\chi_1'$ with multiplicity $\ge 2$
and either $\chi_2'$ or $\chi_3'$.
Comparing the above tables we see that 
the restrictions $\chi_2|_{G_P}$ and $\chi_3|_{G_P}$ are irreducible
(and coincide with $\chi_5'$). Hence, $\chi=\chi_4$ or $\chi_5$.
In particular, $\chi(\mathcal C_5')=\chi(\mathcal C_5'')=0$
and $\chi|_{G_P}$ contains both $\chi_2'$ and $\chi_3'$.
Thus $\chi|_{G_P}=\chi_2'+\chi_3'+4\chi_1'$ and so
$\chi(\mathcal C_3)=4$. This contradicts $\chi_4(\mathcal C_3)=\chi_4(\mathcal C_5)=1$.
Therefore the character of the representation of $G_P$ in $T_{P,X}$
coincides with $\chi_4'$ (and irreducible). 

Then the vertex of the 
tangent cone $TC_{P,X}\subset T_{P,X}$ to $X$ at $P$ must be zero-dimensional.
Hence, $TC_{P,X}$ a cone over a
smooth quadric in $\PP^3$.
This shows that $P\in X$ is an ordinary double point (node).
\end{proof}

Now we claim that $\rk \Cl(X)=1$.
Indeed, assume that $\rk \Cl(X)>1$. Then we have a non-trivial representation
of $G$ in $\Cl(X)\otimes \QQ$ such that $\rk \Cl(X)^G=1$.
By \cite{atlas} the group $G$ has no non-trivial rational
representations of degree $<10$.
Hence, $\rk \Cl(X)\ge 11$.
Let $F\subset X$ be a prime divisor and let $d:=F\cdot K_X^2$
be its degree.
Consider the $G$-orbit $F_1=F, \dots, F_m$. Then $\sum F_i$ 
is a Cartier divisor on $X$ (because the local Weil divisor class group of every singular point is 
torsion free). Hence, $\sum F_i\sim -rK_X$ for some $r$
and so $md=(2g-2)r=14r$. Since $m$ divides $|G|=660$, $d$ is divisible by
$7$. In particular, $X$ contains no surfaces of degree $\le 6$.
Then by \cite[Cor. 3.12]{Kaloghiros} $\rk \Cl(X)\le 7$, a contradiction.
Therefore, $\rk \Cl(X)=1$.Then by Claim \ref{claim-singular-points-V14} below 
the number of singular points of $X$ is at most $5$.
The contradiction proves the lemma.
\end{proof}

\begin{claim}
\label{claim-singular-points-V14}
Let $X$ be a Gorenstein Fano threefold whose singularities are only \textup(isolated\textup) 
ordinary double points. 
Let $N$ be the number of singular points. Then 
\[
N\le \rk \Cl(X)-\rho(X)+\h^{1,2}(X')-\h^{1,2}(\hat X)
\le \rk \Cl(X)-1+\h^{1,2}(X'), 
\]
where $X'$ is a smoothing of $X$ and $\hat X\to X$ is the blowup of singular points.
\end{claim}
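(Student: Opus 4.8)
The plan is to prove, in fact, the \emph{equality}
\[
N=\rk\Cl(X)-\rho(X)+\h^{1,2}(X')-\h^{1,2}(\hat X);
\]
the first inequality of the claim is then this equality, and the second one follows at once since $\h^{1,2}(\hat X)\ge 0$ and $\rho(X)\ge 1$. The whole argument is a comparison of topological Euler numbers of the two smooth threefolds $\hat X$ (the blow-up of the nodes) and $X'$ (a global smoothing), carried out relative to the singular model $X$ so that the unknown invariants of $X$ cancel.

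First I would record the local picture. As each singular point is a $3$-dimensional ordinary double point, the blow-up $\pi\colon\hat X\to X$ of $\Sing(X)$ is smooth, the exceptional divisor over each node is a smooth quadric $E_i\simeq\PP^1\times\PP^1$, and $\hat X\setminus\bigcup E_i\cong X\setminus\Sing(X)$. The crucial input is the identity $\rho(\hat X)=\rk\Cl(X)+N$: from the exact sequence
\[
\bigoplus_i\ZZ E_i\longrightarrow\Pic(\hat X)\longrightarrow\Pic\bigl(\hat X\setminus\bigcup E_i\bigr)\longrightarrow 0,
\]
the first arrow is injective because the $E_i$ have disjoint supports and are $\pi$-contracted, while $\Pic(\hat X\setminus\bigcup E_i)=\Cl(X\setminus\Sing(X))=\Cl(X)$ because $\Sing(X)$ has codimension three. (Equivalently: blowing up the nodes makes every Weil divisor Cartier, and $\rk\Cl(X)-\rho(X)$ is the classical defect.) On the other side, a global smoothing $X'$ exists by Theorem \ref{theorem-Namikawa}; it is a smooth Fano threefold with $\rho(X')=\rho(X)$ by Remark \ref{remark-Namikawa}, and since both $\hat X$ and $X'$ are smooth rationally connected threefolds, they are simply connected with $\h^{2,0}=\h^{3,0}=0$, so $\chi(\hat X)=2+2\rho(\hat X)-2\h^{1,2}(\hat X)$ and likewise for $X'$.

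Next I would compute Euler numbers. Excising a small contractible neighbourhood of each node gives $\chi(X\setminus\Sing(X))=\chi(X)-N$; the link of a $3$-fold node is $S^2\times S^3$ with zero Euler number, so gluing back the $N$ quadric surfaces yields $\chi(\hat X)=\chi(X)-N+4N$, whereas gluing back the $N$ Milnor fibres (each homotopy equivalent to $S^3$, of zero Euler number) yields $\chi(X')=\chi(X)-N$. Hence $\chi(\hat X)-\chi(X')=4N$. Substituting the two expressions $\chi=2+2\rho-2\h^{1,2}$ gives $2(\rho(\hat X)-\rho(X'))-2(\h^{1,2}(\hat X)-\h^{1,2}(X'))=4N$, and inserting $\rho(\hat X)=\rk\Cl(X)+N$ and $\rho(X')=\rho(X)$ produces exactly the displayed equality; the remaining inequality then follows from $\rho(X)\ge 1$.

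The main obstacle is the identity $\rho(\hat X)=\rk\Cl(X)+N$ together with the accompanying local-topology inputs — smoothness of the blow-up of a $3$-fold node, the shape $\PP^1\times\PP^1$ of its exceptional divisor, and the homotopy types of the Milnor fibre and of the link. Once these are in hand the rest is routine Euler-characteristic bookkeeping, and the only global facts used are the existence of the smoothing and the invariance $\rho(X')=\rho(X)$, both already available.
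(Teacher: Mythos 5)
Your proof is correct, and it follows a genuinely different route from the paper's. The paper obtains the key inequality $4N\le \Eu(\hat X)-\Eu(X')$ from deformation theory: by the proof of Theorem 13 in \cite{Namikawa-1997}, $N\le\dim\mt{Def}(X,D)-\dim\mt{Def}(\tilde X,\tilde D)$ for a small resolution $\tilde X\to X$ and a general $D\in|-K_X|$, and these dimensions are expressed through $\h^1$ of logarithmic tangent sheaves and then through half Euler numbers; the final bookkeeping ($\rho(\hat X)=\rk\Cl(X)+N$ together with $\Eu=2+2\rho-2\h^{1,2}$ for the two smooth models) is the same in both arguments. You replace this deformation-theoretic input by the purely topological identity $\Eu(\hat X)-\Eu(X')=4N$, coming from additivity of the Euler characteristic, $\chi(\PP^1\times\PP^1)=4$ for the exceptional quadrics, and $\chi=0$ for the Milnor fibre ($\simeq S^3$) of a threefold node. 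This upgrades the claim to the equality $N=\rk\Cl(X)-\rho(X)+\h^{1,2}(X')-\h^{1,2}(\hat X)$, which is consistent with the paper: its displayed chain is in fact an equality too, since $\frac12\Eu(\tilde X)-\frac12\Eu(X')=N$ by the same local count applied to the small resolution. What your version buys is independence from the deformation theory of the pair $(X,D)$ and from the existence of a small resolution, at the cost of the standard local facts you list, all of which are correct for ordinary double points; the global inputs ($\rho(X')=\rho(X)$ and $\Ho^i(\OOO)=0$ for $\hat X$ and $X'$) are available from Remark \ref{remark-Namikawa} and rational connectedness. The one step worth writing out more carefully is the injectivity of $\bigoplus_i\ZZ E_i\to\Pic(\hat X)$: ``disjoint and $\pi$-contracted'' is the right idea, but the clean justification is either restriction to $E_j$, where $E_j|_{E_j}=\OOO_{E_j}(-1,-1)$ is a nontrivial class, or pushing a relation $\sum a_iE_i=\operatorname{div}(f)$ forward to $X$ to conclude that $f$ is constant.
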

\begin{proof}
Let $D\in |{-}K_X|$ be a general member, let $\tilde X\to X$ be a small 
(not necessarily projective) resolution, 
and let $\tilde D\subset \tilde X$ be the pull-back of $D$.
By the proof of Theorem 13 in \cite{Namikawa-1997} we can write
\begin{multline*}
N\le \dim\mt{Def} (X,D)-\dim\mt{Def} (\tilde X,\tilde D)
=\h^1(X', T_{X'}(-\log D'))-
\\
-\h^1(\tilde X, T_{\tilde X}(-\log \tilde D))=
\textstyle \frac12 \Eu(\tilde X)-\frac12 \Eu(X')=\frac12 \Eu(\hat X)-N-\frac12 \Eu(X'),
\end{multline*}
where $\mt{Def} (X,D)$ (resp. $\mt{Def} (\tilde X,\tilde D)$) 
denotes the deformation space of the pair $(X,D)$ (resp. $(\tilde X,\tilde D)$)
and $(X',D')$ is a general member of the deformation family $\mt{Def} (X,D)$.
Hence, $4N\le \Eu(\hat X)-\Eu(X')$. 
Note that $\rk \Cl(X)=\rho(\hat X)-N$.
Since both $X'$ and $\hat X$ are projective varieties with 
$\Ho^i(X',\OOO_{X'})=\Ho^i(\hat X,\OOO_{\hat X})=0$, we get the disired inequality.
\end{proof}

\begin{lemma}
Under the assumptions of \xref{main-assumption-4} we have $g\le 8$.
\end{lemma}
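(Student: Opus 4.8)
By the results established so far in this section, $X$ is smooth, $\rho(X)=1$, $7\le g$, and $\Ho^0(X,-K_X)^G=0$ (Lemma \ref{lemma-Gorenstein-invariant-hyperplane}); since Theorem \ref{theorem-calssification-Fano} lists no smooth Fano threefold of genus $11$ with $\rho=1$, it remains to exclude $g\in\{9,10,12\}$. The plan is to invoke Mukai's description of these anticanonical Fano threefolds, together with the classifications of finite simple subgroups of the ambient semisimple groups.

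For $g=9$ the anticanonical model is a codimension-three linear section $X_{16}=\LGr(3,6)\cap\PP^{10}$ of the Lagrangian Grassmannian $\LGr(3,6)\subset\PP^{13}=\PP(V)$, where $V$ is the $14$-dimensional fundamental representation (the primitive part of $\wedge^3\CC^6$) of $\Sp_6(\CC)$; for $g=10$ it is a codimension-two linear section $X_{18}=\Sigma\cap\PP^{11}$ of the five-dimensional adjoint variety $\Sigma\subset\PP^{13}=\PP(\mathfrak g_2)$ of $\mathrm G_2$; and for $g=12$ the threefold $X_{22}$ is reconstructed from a net of conics in $\PP^2$, its Hilbert scheme of conics being isomorphic to $\PP^2$. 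By Mukai's reconstruction theorem these data are intrinsic to $X$, so the $G$-action propagates to the ambient picture (faithfully, since the homogeneous variety is linearly nondegenerate): for $g=9$ the preimage $\tilde G$ of $G$ in $\Sp_6(\CC)$ acts on $V$ and stabilizes the $11$-dimensional submodule $W$ with $X\subset\PP(W)=\PP^{10}$; for $g=10$ we get $G\subset\mathrm G_2(\CC)$ acting on $\mathfrak g_2$ and stabilizing a $12$-dimensional submodule $W$; for $g=12$ we get a homomorphism $G\to\PGL_3(\CC)$, the action on the $\PP^2$ of conics.

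Now I would argue by a dichotomy. If $G$ acts nontrivially on the relevant small projective space --- $\PP(W^\perp)\simeq\PP^2$ for $g=9$, $\PP(W^\perp)\simeq\PP^1$ for $g=10$, the $\PP^2$ of conics for $g=12$ --- then $G$ embeds into $\PGL_3(\CC)$ or $\PGL_2(\CC)$, whence $G\simeq\A_5$, $\A_6$ or $\PSL_2(7)$ by Theorems \ref{theorem-classification-3} and \ref{theorem-main-DI}, contradicting $G\not\subset\Cr_2(\CC)$. In the opposite case the acting linear group acts on the corresponding subspace by scalars, i.e. through a character, which is trivial since $G$ and its cover are perfect. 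For $g=9$ this is already contradictory: no nonabelian simple group outside $\Cr_2(\CC)$ carries a (possibly reducible) $6$-dimensional symplectic \emph{linear} representation --- by Theorem \ref{theorem-Lindsey} the $6$-dimensional irreducibles are those of $\A_7$, $\PSp_4(3)$, $\SU_3(3)$ (orthogonal, orthogonal, non-selfdual) or of $\PSL_2(7)\subset\Cr_2(\CC)$ --- so $\tilde G$ is a genuine double cover, its central involution acts as $-\mathrm{id}$ on $V$, hence by $-1$ on $W^\perp$, incompatible with the trivial character. For $g=10$, by the classification of finite simple subgroups of $\mathrm G_2(\CC)$ (Cohen--Wales) a simple $G\not\subset\Cr_2(\CC)$ is $\PSL_2(8)$, $\PSL_2(13)$ or $\SU_3(3)$, and for each of these $\mathfrak g_2|_G$ has no trivial summand (it is irreducible of dimension $14$, or splits as $7\oplus7$); hence $\mathfrak g_2^G=0$ and $G$ cannot fix a $2$-dimensional subspace. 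For $g=12$, if $G$ lies in the kernel of $\Aut(X_{22})\to\PGL_3(\CC)$ it fixes every conic on $X_{22}$; since conics cover $X_{22}$, $G$ then acts on a conic $C\simeq\PP^1$ through a general point, giving either $G\simeq\A_5\subset\Cr_2(\CC)$ or a $G$-fixed point on $X$, both impossible (the latter by Lemma \ref{lemma-fixed-point}). Thus $g\in\{9,10,12\}$ is excluded and $g\le 8$.

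The main obstacle is this second horn of the dichotomy: it relies on the classifications of finite simple subgroups of $\PSp_6(\CC)$ and of $\mathrm G_2(\CC)$, and on controlling the $G$-invariants in $\wedge^3\CC^6$ and in the adjoint module $\mathfrak g_2$; the remaining steps are routine.
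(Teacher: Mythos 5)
Your argument for $g=12$ is the paper's own (conics parameterized by $\PP^2$, trivial action on that $\PP^2$, then a contradiction on an individual conic). For $g=9$ and $g=10$ you take a genuinely different route: the paper avoids Mukai's linear-section models entirely and instead picks an element $\delta\in G$ of prime order $p\ge 5$, shows via the Lefschetz fixed point formula (using that $G$ acts trivially on $\Ho^3(X,\CC)$ and $\operatorname{Lef}(X,\delta)=2g-20\le 0$) that $\operatorname{Fix}(\delta)$ is positive-dimensional, produces a $\delta$-invariant line, and runs the double projection from that line to land on a curve $\Gamma$ of genus $2$ or $3$ where the same Lefschetz computation gives a contradiction. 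Your representation-theoretic approach is attractive and the $g=10$ half looks sound, but the $g=9$ half has a genuine gap.

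The false step is the assertion that no nonabelian simple group outside the list \eqref{eq-main-cr2} carries a $6$-dimensional symplectic linear representation. The group $\SU_3(3)$ is a counterexample: its unique irreducible character of degree $6$ is self-dual (it is the only character of that degree), and it is not orthogonal --- otherwise $\SU_3(3)\subset\operatorname{PSO}_6(\CC)$ would force $\SU_3(3)\subset\PGL_4(\CC)$ via the argument of Lemma \ref{lemma-quadric4}, contradicting Theorem \ref{theorem-Blichfeldt} --- so it is symplectic and $\SU_3(3)\subset\Sp_6(\CC)$. (This is exactly why $\SU_3(3)$ appears in Lindsey's list but drops out of Lemma \ref{lemma-quadric4}.) Consequently, for $G\simeq\SU_3(3)$ the preimage $\tilde G$ may be the split extension $G\times\muu_2$, your ``trivial character'' conclusion fails (the character can be the sign of the central factor, consistent with $-\mathrm{id}$ acting by $-1$ on $V$), and no contradiction is reached. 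The gap is repairable, but not by the dichotomy as written: for instance, $\SU_3(3)$ has irreducible representations only in degrees $1,6,7,14,21,27,28,32$, so it admits no $11$-dimensional representation without trivial summands, contradicting Lemma \ref{lemma-Gorenstein-invariant-hyperplane} applied to $\Ho^0(X,-K_X)\simeq\CC^{g+2}=\CC^{11}$; alternatively one can check that the $14$-dimensional $\Sp_6$-module restricts to $\SU_3(3)$ without a $3$-dimensional submodule. Either supplement must be added before the $g=9$ case can be considered closed; you should also state explicitly that you are invoking Mukai's genus $9$ and $10$ theorems (including the functoriality of the linear section), which the paper only quotes for genus $7$ and $8$.
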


\begin{proof}
First we consider the case $g=12$.
Then the family of conics 
on $X$ is parameterized by the projective plane $\PP^2$,
see \cite{Kollar2004b}.
By our assumption the induced action of $G$ on $\PP^2$
is trivial. Hence $G$ acts non-trivially on each conic, a contradiction.

Now assume that $g=9$ or $10$.
We claim that in the case $g=9$ the order of $G$ is divisible by $5$
or $11$. 
This follows from Theorem \ref{theorem-Blichfeldt} whenever $G$ has an 
irreducible representation of degree $4$.
Otherwise 
the representation of $G$ in $\Ho^0(X,-K_X)\simeq \CC^{11}$
is either irreducible or has $5$-dimensional irreducible subrepresentation.
By Theorem \ref{theorem-calssification-Fano} and our assumptions
the action of $G$ on $\Ho^{1,2}(X)$ is trivial, so is the action on $\Ho^3(X,\CC)$.
Let $\delta\in G$ be an element of prime order $p\ge 5$.
If $g=9$, then we take $p=5$ or $11$.
Assume that $\delta$ has no fixed points. Then the quotient 
$X/\langle\delta\rangle$ is a smooth Fano threefold.
On the other hand, Fano manifolds are simply-connected,
a contradiction.
Therefore, $\delta$ has at least one fixed point on $X$. 
By the Lefschetz fixed point formula 
we have $\operatorname{Lef}(X,\delta)=4-\dim \Ho^3(X,\CC)=2g-20$.
If $g=9$ or $10$, then $\operatorname{Lef}(X,\delta)\le 0$.
Therefore, the set $\operatorname{Fix}(\delta)$ of $\delta$-fixed points
has positive diminsion. Let $\Phi(X)\subset X$ be the surface 
swept out by lines. Then $\operatorname{Fix}(\delta)\cap \Phi(X)\neq \emptyset$.
Take a point $P\in \operatorname{Fix}(\delta)\cap \Phi(X)$.
Since $X$ is an intersection of quadrics, 
there are at most four lines passing through $P$,
see \cite[Prop. 4.2.2]{Iskovskikh-Prokhorov-1999}. 
The group $\langle\delta\rangle$ cannot interchange these lines.
Hence, there is a $\langle\delta\rangle$-invariant 
line $\ell\subset X$.
Now consider the double projection digram (see \cite{0691.14027},
\cite[Th. 4.3.3]{Iskovskikh-Prokhorov-1999}):
\[
\xymatrix{
&\tilde X\ar@{-->}[r]^{\chi}\ar[dl]_{\sigma}&\tilde X^+\ar[dr]^{\varphi}
\\
X\ar@{-->}[rrr]&&&Y
} 
\]
where $\sigma$ is the blowup of $\ell$ and $\chi$ is a flop.
If $g\ge 9$, then $Y$ is a smooth Fano threefold
and $\varphi$ is the blowup of a smooth curve $\Gamma\subset Y$.
Moreover, 
\begin{enumerate}
\item 
if $g=9$, then $Y\simeq\PP^3$, $\Gamma\subset \PP^3$ 
is a non-hyperelliptic curve of genus $3$ and degree $7$
contained in a unique irreducible cubic surface $F\subset \PP^3$, 
\item
if $g=10$, then $Y=Y_2\subset \PP^4$ is a smooth quadric, 
$\Gamma$ 
is a (hyperelliptic) curve of genus $2$ and degree $7$
contained in a unique irreducible surface $F\subset Y$
of degree $4$. 
\end{enumerate}
Clearly, the above diagram is $\langle\delta\rangle$-equivariant.
Since the linear span of $\Gamma$ coincides with
$\PP^3$ for $g=9$ (resp. $\PP^4$ for $g=10$), the group $\langle\delta\rangle$ 
non-trivially acts on $\Gamma$. 
On the other hand,
the action of $\langle\delta\rangle$ 
on $\Ho^1(\Gamma,\ZZ)\simeq \Ho^3(X,\ZZ)$ is trivial.
This contradicts the Lefschetz fixed point formula.
\end{proof}

Now we are going to finish our treatment of the Gorenstein case. 
It remains to consider two cases: $g=8$ and $g=7$, where 
$X=X_{2g-2}\subset \PP^{g+1}$ is a smooth Fano threefold 
with $\Pic (X)=-K_X\cdot \ZZ$.
Here we need the following result of S. Mukai.

\begin{theorem}[\cite{Mukai-1988}]
\begin{enumerate}
 \item \textup(see also \cite{Gushelcprime1983}\textup)
Let $X=X_{14}\subset \PP^9$ be a smooth Fano threefold of genus $8$
with $\rho(X)=1$. Then $X$ is isomorphic to a linear section of
the Grassmannian $\Gr(2,6)\subset \PP^{14}$
by a subspace of codimension $5$. 
Any isomorphism $X=X_{14}\stackrel{{\sim}\hspace{5pt}}{\longrightarrow} X'=X_{14}'$
of two such smooth sections 
is induced by an isomorphism of the Grassmannian $\Gr(2,6)$.

 \item 
Let $X=X_{12}\subset \PP^8$ be a smooth Fano threefold of genus $7$
with $\rho(X)=1$. Then $X$ is isomorphic to a 
linear section of
the Lagrangian Grassmannian 
$\LGr(4,9)\subset \PP^{15}$ by a subspace of dimension $8$ 
\textup(see Example \xref{example-genus=7}\textup).
Any isomorphism $X=X_{12}\stackrel{{\sim}\hspace{5pt}}{\longrightarrow} X'=X_{12}'$
of two such smooth sections 
is induced by an isomorphism of the Lagrangian Grassmannian 
$\LGr(4,9)$.
\end{enumerate}
\end{theorem}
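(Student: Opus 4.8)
The plan is to follow S.~Mukai's vector bundle method. In each case one produces on $X$ a distinguished stable vector bundle $\mathcal{E}$ whose space of global sections has exactly the dimension forced by Riemann--Roch, uses $\mathcal{E}$ to build a morphism from $X$ to the relevant rational homogeneous space, shows that this morphism is a closed embedding realizing $X$ as a linear section, and finally deduces the rigidity statement from uniqueness of $\mathcal{E}$.

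\textbf{Genus $8$.} First I would construct a rank-$2$ bundle $\mathcal{E}$ with $c_1(\mathcal{E})=-K_X$ and $c_2(\mathcal{E})$ chosen so that $\chi(\mathcal{E})=6$, by a Serre-type construction from a suitable curve $Z\subset X$. One checks $\Ho^i(X,\mathcal{E})=0$ for $i>0$ (so $\h^0(X,\mathcal{E})=6$), that $\mathcal{E}$ is stable (using $\Pic(X)=\ZZ\cdot(-K_X)$ and the value of $c_2$), and that $\mathcal{E}$ is globally generated. Setting $W:=\Ho^0(X,\mathcal{E})^\vee$ (so $\dim W=6$), global generation yields a morphism $\varphi\colon X\to\Gr(2,W)\subset\PP(\wedge^2W)=\PP^{14}$ with $\varphi^*\OOO_{\Gr}(1)\cong\OOO_X(-K_X)$. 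Injectivity of $\varphi$ and of $d\varphi$ follows from the usual argument (a collapsed pair of points, or a tangent direction, would force a nonzero section of $\mathcal{E}$ vanishing on a curve too large for $c_2$, against stability), so $\varphi$ is a closed embedding; then $\deg\varphi(X)=(-K_X)^3=14=\deg\Gr(2,6)$ while $\varphi(X)$ spans only a $\PP^9$ (since $\h^0(X,-K_X)=g+2=10$), and a dimension-and-degree count gives $X\cong\Gr(2,6)\cap\PP^9$, a codimension-$5$ linear section. (Consistency check: both $X_{14}$ and $\Gr(2,6)$ are cut out by $15$ quadrics.)

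\textbf{Genus $7$.} The argument is parallel, with $\Gr(2,6)$ replaced by $\LGr(4,9)\subset\PP^{15}$, the variety of maximal isotropic $4$-dimensional subspaces of a $9$-dimensional quadratic space, embedded by its spinor line bundle $\OOO(1)$ (whose square is the Plücker class). Here one constructs a rank-$4$ stable bundle $\mathcal{E}$ on $X$ fitting into an isotropic embedding $\mathcal{E}\hookrightarrow U\otimes\OOO_X$ for a suitable $9$-dimensional quadratic space $U$, with the cohomology of $\mathcal{E}$ and $\mathcal{E}^\vee$ dictated by Riemann--Roch. The classifying morphism $X\to\LGr(4,U)=\LGr(4,9)$ pulls $\OOO(1)$ back to $\OOO_X(-K_X)$; stability and global generation show it is a closed embedding, and the count $(-K_X)^3=12=\deg\LGr(4,9)$ together with the span ($\h^0(X,-K_X)=9$, so a $\PP^8$) identifies $X$ with $\LGr(4,9)\cap\PP^8$. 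Equivalently one verifies that the ideal of $X$ in $\PP^8$ is generated by the restrictions of the $10$ quadrics defining $\LGr(4,9)$.

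\textbf{Rigidity, and the main obstacle.} For the second half of each statement, an isomorphism $f\colon X\to X'$ satisfies $f^*K_{X'}=K_X$ and hence is induced by a projective-linear isomorphism of the ambient projective spaces of the anticanonical embeddings. The crucial point is then that the Mukai bundle of $X$ --- pinned down by its invariants, stability and global generation --- is unique up to isomorphism; pulling back the corresponding bundle on $X'$ one obtains $f^*\mathcal{E}'\cong\mathcal{E}$, whence an isomorphism $\Ho^0(X,\mathcal{E})\cong\Ho^0(X',\mathcal{E}')$, i.e.\ an element of $\PGL_6(\CC)$ (resp.\ of the orthogonal group of the relevant quadratic space) whose induced automorphism of $\Gr(2,6)$ (resp.\ of $\LGr(4,9)$) restricts to $f$. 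I expect the heart of the matter to be precisely this rigidity: showing that the moduli of stable bundles with the prescribed invariants on $X$ is a single reduced point (so $\mathcal{E}$ is determined and $\operatorname{Ext}^1(\mathcal{E},\mathcal{E})=0$), on top of the initial verification that $\mathcal{E}$ is stable and globally generated. The genus-$7$ case should be the more delicate one, since $\LGr(4,9)$ is far less explicit than a Grassmannian and one must in addition control the orthogonal structure carried by the space of sections.
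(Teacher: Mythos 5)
This statement is not proved in the paper at all: it is imported verbatim from Mukai's work (with part (i) also due to Gushel), and the paper uses it as a black box to handle the genus $7$ and $8$ cases of the Gorenstein classification. So there is no internal proof to compare against. Your sketch is a faithful outline of the actual argument in the cited references --- Mukai's vector-bundle method: construct the rank-$2$ (resp.\ rank-$4$ isotropic) Mukai bundle, prove stability and global generation, embed $X$ into $\Gr(2,6)$ (resp.\ $\LGr(4,9)$) via the sections, match degree and span to identify $X$ with a linear section, and derive rigidity from the uniqueness of the bundle. The numerical checks you make ($\deg\Gr(2,6)=14$, $\deg\LGr(4,9)=12$, $\h^0(-K_X)=g+2$) are correct.

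That said, be aware that what you have written is a program rather than a proof: every genuinely hard step --- existence of the bundle with the prescribed invariants, its stability, its global generation, and above all the uniqueness (the fact that the moduli space is a single reduced point, which you correctly identify as the heart of the rigidity statement) --- is asserted with a one-line heuristic ("a collapsed pair of points would force a section vanishing on a curve too large for $c_2$") but not carried out. In Mukai's papers these verifications occupy most of the work, and the genus $7$ case additionally requires setting up the orthogonal structure on the $9$-dimensional section space before $\LGr(4,9)$ even makes sense as a target. Since the theorem is cited rather than reproved here, this is acceptable, but the proposal should not be mistaken for a self-contained argument.
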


Consider the case $g=8$.
By the above theorem the group $G$ acts on $\Gr(2,6)$
and on $\PP^{14}=\PP(\wedge^2\CC^5)=\PP(\Ho^0(\Gr(2,6),\TTT^*))$,
where $\TTT$ is tautological rank two vector bundle on $\Gr(2,6)$.
The linear span of $X=X_{12}$ in $\PP^{14}$ is a $G$-invariant $\PP^9$.
Let $\PP^4\subset {\PP^{14}}^*=\PP(\wedge^2{\CC^5}^*)$ 
be the $G$-invariant orthogonal subspace.
The locus of all degenerate skew-forms is the Pfaffian cubic hypersurface 
$Y_3\subset \PP(\wedge^2{\CC^5}^*)$. Put $X_3=Y_3\cap \PP^4$.
Then $X_3\subset \PP^4$ is a $G$-invariant cubic.
Since the variety $X=X_{14}$ is smooth (see Lemma \ref{lemma-Gor-tr}), 
so is our cubic $X_3\subset \PP^4$, see \cite[Prop. A.4]{Kuznetsov2004-e}.
Then by Lemma \ref{lemma-index-Fano} we get 
$G\simeq \PSL_2(11)$, $X^{\mathrm k}_3\subset \PP^4$ is the Klein cubic
and we get Example \xref{example-Klein-cubic}.

Finally consider the case $g=7$.
The group $G$ acts on the Lagrangian Grassmannian 
$\LGr(4,9)\subset \PP^{15}$. 
Let $C:=\LGr(4,9)\cap \PP^{6}$, where $\PP^{6}\subset \PP^{15}$ is 
the subspace orthogonal to $\PP^8$ with respect to the 
$G$-invariant quadratic form on $\PP^{14}$.
Then $C\subset \PP^6$ is a smooth canonical curve of genus $7$ 
\cite[Lemma 3.2]{Iliev2004}. Hence $G\subset \Aut(C)$. On the other hand, 
by the Hurwitz bound we have $|G|\le |\Aut(C)|\le 504$.
Furthermore, the group has an irreducible representation in $\Ho^0(X,-K_X)\simeq \CC^9$.
Hence, $|G|$ is divisible by $9$.
Now it is an easy exercise to show that either $G\simeq\SL_2(8)$ or 
$G$ is contained in the list \eqref{eq-main-cr2}. 
For example, according to Theorem \ref{theorem-transitive-groups} we may assume that
$G$ has no subgroups of index $\le 26$, i.e., of order $\ge 19$.
Hence $234=26\cdot 9\le |G|$.
Now we write the Hurwitz formula for the quotient $\pi: C\to C/G=C'$:
\[
12=2g(C)-2=|G|(2g(C')-2)+ |G|\sum_{i=1}^s (1-1/a_i), 
\]
where $\sum (a_i-1) Q_i$ is the ramification divisor on $C'$.
By the above $a_i\le 18$ for all $i$.
There are only two integer solutions:
$|G|=288$, $(a_1,\dots, a_s)=(2, 3, 8)$ and 
$|G|=504$, $(a_1,\dots, a_s)=(2, 3, 7)$.
In the first case the Sylow $17$-subgroup has index $14$ in $G$, 
a contradiction. In the second case the curve $C$ is unique up to isomorphism
and $G\simeq \PSL_2(8)$, see \cite{Macbeath1965}.
By the construction in Example \ref{example-genus=7}
the threefold $X_{12}$ is uniquely determined by $C$, so $X_{12}=X_{12}^{\mathrm m}$.
This finishes the treatment of the case of Gorenstein $X$. 

\section{Case: $X$ is not Gorenstein}
\label{section-non-Gorenstein}
In this section, as in \S \ref{section-Gorenstein}, we assume that 
$G$ is a simple group which does not admit any embeddings into 
$\Cr_2(\CC)$. We assume $X$ is 
a $G\QQ$-Fano threefold such that $K_X$ is not Cartier.
Let $\Omega\subset \Sing (X)$ be the set of all non-Gorenstein 
points and let $n:=|\Omega|$.
\begin{lemma}
\label{lemma-cyclic-quotient}
In the above assumptions the group 
$G$ transitively acts on $\Omega$, $n\ge 9$, and each point $P\in \Omega$ 
is a cyclic quotient singularity of index $2$.
\end{lemma}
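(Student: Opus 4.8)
The plan is the following. For $P\in\Omega$ write $r_P\ge 2$ for its index, let $\pi\colon(U^\sharp,P^\sharp)\to(U,P)$ be the index-one cover (see \ref{Terminal-singularities}), and let $\tilde G_P$ be the lift of the stabilizer $G_P$. Since the Galois group $\muu_{r_P}$ acts by scalars on each graded piece of the local canonical cover it commutes with the linear action of $\tilde G_P$, so
\[
1\longrightarrow\muu_{r_P}\longrightarrow\tilde G_P\longrightarrow G_P\longrightarrow 1
\]
is a \emph{central} extension and $\tilde G_P$ acts faithfully on $T:=T_{P^\sharp,U^\sharp}$, where $\dim T\le 4$ and $\dim T=3$ exactly when $P$ is a cyclic quotient, in which case $P\simeq\CC^3/\tfrac1{r_P}(1,-1,b)$. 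By Lemma \ref{lemma-fixed-point} the group $G$ has no fixed point on $X$, so each $G$-orbit in $\Omega$ has more than one element; the task is to sharpen this, prove transitivity, and then identify the singularities.

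\emph{Step 1 ($n\ge 9$ and transitivity).} If $G\not\simeq\A_m$ with $m\ge 7$, then by Theorem \ref{theorem-transitive-groups} the group $G$ has no transitive action on at most $8$ symbols, so every orbit in $\Omega$ has $\ge 9$ elements. If $G\simeq\A_m$ with $m\ge 7$, an orbit of size $m\le 8$ would have $G_P\simeq\A_{m-1}$; as $r_P\ge 2$, $\tilde G_P$ is a nontrivial central extension of $\A_{m-1}$, and the only such with a faithful representation of degree $\le 4$ are $2.\A_6$ ($m=7$) and $2.\A_7$ ($m=8$). In either case $\dim T=4$ (a faithful $3$-dimensional representation exists only for $3.\A_6$, which yields the non-terminal point $\CC^3/\tfrac13(1,1,1)$), $\muu_2$ acts by scalars, and the hypersurface equation $\phi=\phi_2+\phi_3+\cdots$ of $(U^\sharp,P^\sharp)$ is $\tilde G_P$-invariant ($2.\A_6$, $2.\A_7$ are perfect); hence $\phi_2$ is an invariant quadratic form on the $4$-dimensional representation. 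But a quasi-simple group having no nontrivial representation of degree $\le 2$ cannot embed into $\SL_2(\CC)\times\SL_2(\CC)$, hence not into $\SO_4(\CC)$, so that representation is not orthogonal and $\phi_2=0$, contradicting $\mult(U^\sharp,P^\sharp)=2$. (For $m\ge 9$ no suitable $\tilde G_P$ exists.) Thus every orbit in $\Omega$ has $\ge 9$ elements. Finally, by the standard bound for $\QQ$-Fano threefolds $\sum_{\mathrm{basket}}(r-\tfrac1r)<24$, and each non-Gorenstein point contributes at least $\tfrac32$; hence $n<16$, and as every orbit has $\ge 9$ elements, $\Omega$ is a single $G$-orbit.

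\emph{Step 2 (each $P$ is $\CC^3/\tfrac12(1,1,1)$).} Now $9\le n\le 15$ and the pairs $(G,G_P)$ are those of Theorem \ref{theorem-transitive-groups}. By the argument of Corollary \ref{corollary-transitive-groups}, extended to cyclic central extensions, in all remaining possibilities except $(G,G_P)=(\PSL_2(11),\A_5)$ and $(\A_7,\PSL_2(7))$ no suitable $\tilde G_P$ exists (the case $(\A_7,\Sy_5)$ dies because $\Sy_5$ has no faithful representation of degree $\le 3$ while its faithful $4$-dimensional representations carry no invariant quadratic form). In the two surviving cases, if $\tilde G_P$ acts irreducibly on $T$ then $\muu_{r_P}$ acts by scalars; $\dim T=4$ is excluded as in Step 1 (the faithful $4$-dimensional representations of $\SL_2(5)$ and $\SL_2(7)$ carry no invariant quadratic form), so $\dim T=3$, and terminality of $\CC^3/\tfrac1{r_P}(1,1,1)$ forces $r_P=2$, i.e.\ $P\simeq\CC^3/\tfrac12(1,1,1)$. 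Otherwise $\tilde G_P$ fixes a nonzero vector $v\in T$ and acts on a complement through $\rho_3$ (resp.\ $\rho_3'$); for $G_P\simeq\PSL_2(7)$ this is impossible, since $\rho_3$ is not self-dual, so the only invariant quadratic form on $T$ has rank $1$ and the resulting singularity (a double cover of $\CC^3$ branched along the Klein quartic cone) is not compound Du Val, against terminality. There remains one configuration: $G\simeq\PSL_2(11)$ acting on $11$ points with each $P$ a $cA/2$-point on which $\A_5\simeq G_P$ acts through $\rho_3\oplus\mathbf 1$; this must be excluded by a global argument: $\Cl(X)\otimes\QQ$ is a $G$-module with $\rk\Cl(X)^G=1$, so either $\rk\Cl(X)$ exceeds the bound forced by the absence of surfaces of small degree, or $\rk\Cl(X)=1$ and a Lefschetz/character computation for an element of order $11$ acting on $\Omega$ and on $\Ho^*(X,\CC)$ yields a contradiction (cf.\ the Gorenstein case, Lemma \ref{lemma-Gor-tr}).

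\emph{Main obstacle.} The essential difficulty is precisely this last point — globally ruling out the reducible local configuration for $\PSL_2(11)$ — since the local representation theory by itself permits a $G$-orbit of $cA/2$ points; it is here that the global structure of $X$ (its divisor class group, the $G$-action on cohomology, and $\QQ$-factorialization data) must be brought to bear, in the spirit of the Gorenstein case. The other steps are comparatively routine once the central extension $\tilde G_P$ is placed inside $\GL(T)$ with $\dim T\le 4$.
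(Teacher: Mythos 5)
Your Step~1 is essentially sound and close in spirit to the paper's argument: the only possible orbits of size $\le 8$ are the natural ones of $\A_7$ and $\A_8$, these are killed by placing the central extension $\tilde G_P$ of $\A_6$ resp.\ $\A_7$ inside $\GL(T_{P^\sharp,U^\sharp})$ with $\dim T_{P^\sharp,U^\sharp}\le 4$, and combining with $\sum(r-1/r)<24$ gives $n<16$ and hence a single orbit. The genuine gap is in Step~2, and you have flagged it yourself: your local representation theory does not exclude an orbit of eleven $cA/2$-points for $\PSL_2(11)$ (with $\A_5$ acting through $\mathbf 1\oplus\rho_3$), and the ``global argument'' you propose (class group rank, Lefschetz computations) is neither carried out nor the right tool. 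What you are missing is that the Kawamata--KMMT inequality is a sum over the \emph{basket} $\B$ of virtual cyclic quotient points, not over the actual singular points, and a terminal point of index $\ge 2$ that is \emph{not} a cyclic quotient contributes at least \emph{two} basket points, each with $r-1/r\ge \frac32$; this is exactly how the paper disposes of non-cyclic-quotient points. Once $n\ge 9$ is known, a non-cyclic-quotient orbit would give $\sum(r-1/r)\ge 2\cdot\frac32\cdot 9=27>24$, and the index is pinned down by $9(r-\frac1r)<24$, which forces $r=2$ because $3-\frac13=\frac83$ is not strictly less than $\frac{24}{9}=\frac83$. So your entire Step~2 collapses to two lines of arithmetic; no further representation theory is needed, and in particular the identification of $(G,G_P)$ as $(\PSL_2(11),\A_5)$ does not belong to this lemma (the paper defers it to the next one).

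A secondary inaccuracy, also mooted by the basket count: your claim that faithful $4$-dimensional representations of central extensions of $\A_5$ carry no invariant quadratic form is only correct for the irreducible representations of the nonsplit extension $\SL_2(5)$, which are symplectic. For the split extension $\muu_r\times\A_5$ acting through the standard $4$-dimensional representation of $\A_5$ twisted by a character of $\muu_r$, the quadratic part $\phi_2$ of the local equation is only \emph{semi}-invariant, your ``perfect group'' reduction does not apply, and a nonzero nondegenerate semi-invariant quadric does exist (the standard representation of $\A_5$ is orthogonal). The clean local exclusion of the irreducible $4$-dimensional case is the one the paper uses in Lemma \ref{lemma-fixed-point}: if $\tilde G_P$ acts irreducibly then $\muu_r$ acts by scalars on $T_{P^\sharp,U^\sharp}$, and by the classification of terminal singularities no non-cyclic-quotient terminal point has scalar Galois action on the $4$-dimensional tangent space of its index-one cover.
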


\begin{proof}
Let $\Omega=\Omega_1\cup\dots\cup\Omega_m$ be the orbit decomposition, and let $n_i:=|\Omega_i|$.
For a point $P_i\in \Omega_i$, let $Q_{ij}\in \B$, $j=1,\dots,l_i$ be ``virtual'' 
points in the basket over $P_i$ and let $r_{ij}$ be the index of $Q_{ij}$. 
The orbifold Riemann-Roch and Myaoka-Bogomolov inequality give us
(see \cite{Kawamata-1992bF}, \cite{KMMT-2000})
\footnote{From \cite{KMMT-2000} we have the inequality
$\sum (r-1/r)\le 24$. The strict inequality follows from 
the proof in \cite{Kawamata-1992bF} because $\rho(X)^G=1$.
I would like to thank Professor Y. Kawamata for pointing me out this fact.} 
\begin{equation}
\label{equation-RR-inequality}
24 > 
\sum_{i=1}^m n_i \sum_{j=1}^{l_i}\left(r_{ij}-\frac 1{r_{ij}}\right)\ge 
\frac 32 \sum_{i=1}^m n_i.
\end{equation}
By Theorem \ref{theorem-transitive-groups} and our assumptions we have $n_1,\dots, n_m\ge 7$.

Assume that $P_1\in X$ is not a cyclic quotient singularity.
Then over each $P_i\in \Omega_1$
there are at least two virtual points $Q_{ij}$, i.e, $l_1>1$.
By \eqref{equation-RR-inequality} we have
\[
24 > 
n_1 \sum_{j=1}^{l_1}\left(r_{1j}-\frac 1{r_{1j}}\right)\ge 
7 \sum_{j=1}^{l_1}\left(r_{1j}-\frac 1{r_{1j}}\right). 
\]
There is only one possibility: $l_1=2$, $n_1=7$, and $r_{11}=r_{12}=2$.
In this case, by the classification \cite[Th. 6.1]{Reid-YPG1987} 
the point $P_1\in X$ is of type $\{xy+\phi(z^2,t)\}/\muu_2(1,1,1,0)$,
where $\ord \phi(0,t)=2$, 
or $\{x^2+y^2+\phi(z,t)\}/\muu_2(0,1,1,1)$ (because the ``axial multiplicity'' 
is equal to $2$).

By Theorem \ref{theorem-transitive-groups} we have
$G\simeq \A_7$ and $G_P\simeq \A_6$.
As in the proof of Lemma \ref{lemma-fixed-point}
we have an embedding $\tilde G_P \subset \GL(T_{P^\sharp,U^\sharp})$,
where $\dim T_{P^\sharp,U^\sharp}=4$ 
and $\tilde G_P$ is a central 
extension of $G_P$ by $\muu_2$. The action of $\tilde G_P$
preserves the tangent cone $TC_{P^\sharp,U^\sharp}\subset T_{P^\sharp,U^\sharp}$
which is given by a quadratic form of rank $\ge 2$.
Since $G_P\simeq \A_6$ cannot act non-trivially on a 
smooth quadric in $\PP^3$, $\rk q\neq 4$.
Hence, $\rk q=2$ or $3$ and the representation of $\tilde G_P$ in 
$T_{P^\sharp,U^\sharp}\simeq \CC^4$ is reducible:
the singular locus of $TC_{P^\sharp,U^\sharp}$ is a 
$\tilde G_P$-invariant linear subspace.
On the other hand, $\tilde G_P\simeq \A_6$ has no faithful representations of 
degree $\le 3$ 
(see, e.g., Theorem \ref{theorem-classification-3} or \cite{atlas}), a contradiction.

Therefore, all the points in $\Omega$ are cyclic quotient singularities.
Then \eqref{equation-RR-inequality} can be rewritten as follows:
\begin{equation}
\label{equation-RR-inequality-2}
24 > 
\sum_{i=1}^m n_i \left(r_{i}-\frac 1{r_{i}}\right)\ge 
\frac 32 \sum_{i=1}^m n_i,
\end{equation}
where $r_i$ is the index of the point $P_i\in \Omega_i$. 
Assume that $n_1\le 8$, then by Theorem \ref{theorem-transitive-groups}
$G\simeq \A_n$ with $n=7$ or $8$, and $G_P\simeq \A_{n-1}$.
As above $\tilde G_P \subset \GL(T_{P^\sharp,U^\sharp})$,
where $\dim T_{P^\sharp,U^\sharp}=3$ (because $U^\sharp$ is smooth)
and $\tilde G_P$ is a central 
extension of $G_P$ by $\muu_{r_1}$.
Clearly, the representation 
$\tilde G_P$ in $\GL(T_{P^\sharp,U^\sharp})$
is irreducible. Hence $\muu_{r_1}$ acts on $T_{P^\sharp,U^\sharp}$
by scalar multiplication. As in the proof of Lemma \ref{lemma-fixed-point},
by the classification of terminal singularities
(Terminal Lemma) \cite{Reid-YPG1987}, we have $r_1=2$.
But then the group $\tilde G_P$ has no non-trivial representations in 
$\CC^3$ by Theorem \ref{theorem-classification-3}.
The contradiction shows that $n_1\ge 9$ and, by symmetry, $n_i\ge 9$
for all $i$. Then by \eqref{equation-RR-inequality-2}
we have $24> 9m \cdot 3/2$. Hence $m=1$, i.e., $\Omega$ consists of one orbit. Further,
$24> 9(r_i-1/r_i)$. Hence $r_i=2$ for all $i$.
\end{proof}

\begin{lemma}
\begin{enumerate}
\item 
$Z(G_P)=\{1\}$, $Z(\tilde G_P)=\muu_2$.
\item 
The representation of $\tilde G_P$ in $T_{P^\sharp, U^\sharp}\simeq\CC^3$
is irreducible.
\item 
The action of $\tilde G_P$ on $T_{P^\sharp, U^\sharp}\simeq\CC^3$
is primitive.
\item
The only possible case is 
$G\simeq\PSL_2(11)$, $n=11$, $G_P\simeq\A_5$.
\end{enumerate}
\end{lemma}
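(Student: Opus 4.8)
We outline how one can prove the four assertions. The plan is to combine the faithful linear action of $\tilde G_P$ on $T:=T_{P^\sharp,U^\sharp}$ coming from the index-one cover (cf. the proof of Lemma \ref{lemma-fixed-point}) with the short list of transitive simple permutation groups in Theorem \ref{theorem-transitive-groups}. By Lemma \ref{lemma-cyclic-quotient} the point $P\in X$ is a cyclic quotient terminal singularity of index $2$, hence of type $\frac{1}{2}(1,1,1)$; thus $U^\sharp$ is smooth at $P^\sharp$, $T\simeq\CC^3$, the Galois group $\muu_2$ of the cover acts on $T$ by $-\operatorname{id}$, and one gets a central extension $1\to\muu_2\to\tilde G_P\to G_P\to 1$ and a faithful representation $\rho\colon\tilde G_P\hookrightarrow\GL(T)=\GL_3(\CC)$ with $\rho(\muu_2)=\{\pm I\}$. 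Moreover $[G:G_P]=n$, and putting $r_i=2$, $m=1$ in \eqref{equation-RR-inequality-2} gives $9\le n\le 15$. First I would run through the pairs $(G,G_P)$ allowed by Theorem \ref{theorem-transitive-groups} in this range and keep only those for which some central extension of $G_P$ by $\muu_2$ has a faithful $3$-dimensional complex representation. For every such $G_P$ on the list other than $\A_5$ and $\PSL_2(7)$ — i.e. for the Mathieu-type groups, for $\PSL_2(11)$, and for the Frobenius-type groups $A\rtimes B$ with $A$ abelian — the minimal degree of a faithful projective representation is at least $4$; for the groups $A\rtimes B$ this is exactly the Clifford-theoretic argument already used in the proof of Corollary \ref{corollary-transitive-groups} (applied now to $\tilde G_P$ and to degree $\le 3$), and for the remaining groups it is read off from \cite{atlas}. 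This leaves precisely the two pairs $(G,G_P)=(\PSL_2(11),\A_5)$ and $(\A_7,\PSL_2(7))$.

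In both surviving cases $G_P$ is a simple non-abelian group with Schur multiplier of order $2$, whose binary cover ($\SL_2(5)$, resp. $\SL_2(7)$) has no faithful $3$-dimensional representation; hence $\tilde G_P\simeq\muu_2\times G_P$, and then the faithfulness of $\rho$ together with the condition $\rho(\muu_2)=\{\pm I\}$ force $\rho$ to be (the sign character of $\muu_2$) tensored with one of the two $3$-dimensional irreducible representations $V_3$ of $G_P$. Such a $\rho$ is visibly irreducible, which proves (ii); it is primitive because an imprimitive $3$-dimensional representation of the simple group $G_P$ would give a non-trivial homomorphism $G_P\to\Sy_3$, which is absurd, proving (iii); and $Z(G_P)=\{1\}$ since $G_P$ is simple non-abelian, whence $Z(\tilde G_P)=Z(\muu_2\times G_P)=\muu_2$, proving (i).

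The remaining task, and the main obstacle, is (iv): to rule out $G\simeq\A_7$ (with $G_P\simeq\PSL_2(7)$ and $n=15$). Here every local and group-theoretic piece of data is consistent — $\rho$ is irreducible and primitive, and even the embedding dimension $\dim T_{P,X}=6$ and the $G_P$-module structure $T_{P,X}\simeq S^2V_3$ (the $6$-dimensional irreducible representation of $\PSL_2(7)$) occur — so the exclusion must come from a global argument. The approach I would pursue is to study $H^0(X,-2K_X)$ as an $\A_7$-module: $2K_X$ is Cartier (every singular point has index dividing $2$), the space of $G$-invariant sections is at most one-dimensional, and the character of $G$ on this module is pinned down by orbifold Riemann--Roch and the holomorphic Lefschetz fixed-point formula. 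A decisive point is that $\PSL_2(7)$ contains no element of order $5$, so a generator of a subgroup $\muu_5\subseteq\A_7$ fixes no point of the orbit $\Omega$; its fixed locus therefore lies in the Gorenstein part of $X$ with prescribed local weights, and substituting the corresponding contributions into the character identity — in the spirit of the Claims in \S\ref{section-Gorenstein} — yields a contradiction. One then concludes that $G\simeq\PSL_2(11)$, $n=11$, $G_P\simeq\A_5$.
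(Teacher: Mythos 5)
Your reduction of the list of candidates and your proofs of (i)--(iii) are sound and run essentially parallel to the paper's: the paper first shows the representation of $\tilde G_P$ on $T_{P^\sharp,U^\sharp}$ is irreducible and primitive and then invokes Blichfeldt's Theorem~\ref{theorem-classification-3}, while you argue via minimal degrees of faithful projective representations of the stabilizers in Theorem~\ref{theorem-transitive-groups}; these amount to the same elimination. Your observation that for the survivors $\tilde G_P\simeq\muu_2\times G_P$ (because $\SL_2(5)$ and $\SL_2(7)$ have no faithful $3$-dimensional representations) is correct and is a useful sharpening.

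The genuine gap is in (iv). You correctly identify that the pair $G\simeq\A_7$, $n=15$, $G_P\simeq\PSL_2(7)$ passes every local and representation-theoretic test: $\muu_2\times\PSL_2(7)$ embeds in $\GL_3(\CC)$ with the central involution acting as $-\operatorname{id}$, irreducibly and primitively, since the Klein group is itself one of the primitive subgroups in Theorem~\ref{theorem-classification-3}. But you then only \emph{announce} a global argument (orbifold Riemann--Roch for $-2K_X$, holomorphic Lefschetz for an element of order $5$) and assert it "yields a contradiction" without carrying out a single computation. As it stands, assertion (iv) is not proved: your submission establishes only that $G\simeq\PSL_2(11)$ or $G\simeq\A_7$. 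For what it is worth, the paper's own justification of (iv) is the one-line "follows from Theorem~\ref{theorem-classification-3}", which on its face suffers from exactly the defect you point out --- $\PSL_2(7)$ is on Blichfeldt's list, so the citation alone does not eliminate the $\A_7$ case --- so you have located a real weak point in the text; but flagging the difficulty is not the same as resolving it, and your proposed Lefschetz computation must actually be executed (with the fixed-locus contributions and the character identity written out) before the lemma can be considered proved.
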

\begin{proof}
(i) follows from the explicit description of groups $G_P$
in Theorem \ref{theorem-transitive-groups}.

(ii)
Assume that $T_{P^\sharp, U^\sharp}=T_1\oplus T_2$.
Then the kernel of the homomorphism 
$\tilde G_P\to \GL(T_2)$ is contained into $Z(\tilde G_P)$.
Hence, $\tilde G_P\to \GL(T_2)$ is injective and so
$G_P$ effectively acts on $\PP^1$, a contradiction.

(iii) 
Assume the converse. Then there is an abelian subgroup
$\tilde A\subset \tilde G_P$ such that $\tilde G_P/\tilde A\simeq
\A_3$ or $\Sy_3$.
Hence there is an abelian subgroup
$A\subset G_P$ such that $G_P/A\simeq \tilde G_P/\tilde A\simeq
\A_3$ or $\Sy_3$. In particular, $G_P$ is not simple and its 
order is divisible by $3$.
Thus by Theorem \ref{theorem-transitive-groups}
there are only three possibilities: 
$G\simeq \SL_3(3)$, $\PSL_2(13)$, and $\SL_4(2)$.

In the case $G\simeq \PSL_2(13)$ the group 
$G_P\simeq \muu_{13}\rtimes \muu_6$ 
has no surjective homomorphisms to $\Sy_3$.
So, $G_P/A\simeq \A_3$ and $A\simeq \muu_{26}$.
On the other hand, $G_P$ contains no elements of order $26$,
a contradiction.
Consider the case $G\simeq \SL_3(3)$. Then $G_P\supset \GL_2(3)$. 
Since $\GL_2(3)/Z(\GL_2(3))\simeq \Sy_4$,
for $A\cap \GL_2(3)$ we have only one possibility:
it is a group of order $8$.
But the group $A\cap \GL_2(3)$ is not abelian, a contradiction.
Finally, in the case $G\simeq \SL_4(2)$ the group 
$\SL_3(2)\subset G_P$ is simple of order $168$,
a contradiction. 

(iv) Follows from Theorem \ref{theorem-classification-3}.
\end{proof}

From now on we assume that $G\simeq \PSL_2(11)$ and $n=11$.

\begin{lemma}
$\dim |{-}K_X|> 0$.
\end{lemma}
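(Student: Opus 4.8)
The plan is to prove $\h^0(X,-K_X)\ge 2$ by computing $\h^0(X,-K_X)=\chi(X,-K_X)$ via orbifold Riemann--Roch and then excluding the single borderline value of $-K_X^3$ that survives. First I would note that since every non-Gorenstein point of $X$ has index $2$, the divisor $-2K_X$ is an ample Cartier divisor; Kawamata--Viehweg vanishing then gives $\Ho^i(X,-K_X)=0$ for $i>0$ (so $\h^0(X,-K_X)=\chi(X,-K_X)$) and $\chi(X,\OOO_X)=1$. By Lemma \ref{lemma-cyclic-quotient} the non-Gorenstein locus consists of $n=11$ cyclic quotient points of index $2$, and a terminal cyclic quotient point of index $2$ is necessarily of type $\frac12(1,1,1)$; hence the basket $\B$ consists of $11$ copies of $\frac12(1,1,1)$ (any extra Gorenstein singularities of $X$ contribute nothing). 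From the Noether-type relation $24\,\chi(X,\OOO_X)=-K_X\cdot c_2(X)+\sum_{Q\in\B}(r_Q-1/r_Q)$ (the inequality form of which is used after \eqref{equation-RR-inequality}) one gets $-K_X\cdot c_2(X)=24-11\cdot\frac32=\frac{15}{2}$, while Reid's local contribution of a $\frac12(1,1,1)$ point to $\chi(X,-K_X)$ equals $-\frac18$ (a value I would double-check on $\PP(1,1,1,2)$). Substituting into
\[
\chi(X,-K_X)=\chi(X,\OOO_X)+\frac1{12}(-K_X)(-2K_X)(-3K_X)+\frac1{12}(-K_X)\cdot c_2(X)+\sum_{Q\in\B}c_Q(-K_X)
\]
yields $\h^0(X,-K_X)=\frac14-\frac12K_X^3$. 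In particular $\h^0(X,-K_X)\ge 1$, and $\h^0(X,-K_X)=1$ exactly when $-K_X^3=3/2$; so it remains to exclude this case.

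Next I would assume $\h^0(X,-K_X)=1$, so $-K_X^3=3/2$ and $|-K_X|=\{S\}$, where $S$ is a $G$-invariant effective ($\QQ$-Cartier, since $K_X$ is $\QQ$-Cartier) Weil divisor with $K_X+S\sim 0$. As $G\simeq\PSL_2(11)$ does not embed into $\Cr_2(\CC)$, $X$ is $G\QQ$-factorial with terminal singularities, and $-K_X$ is ample, Lemma \ref{lemma-hyp-sect} applies and gives: the pair $(X,S)$ is LC, the surface $S$ is reducible, and $G$ permutes the distinct irreducible components $S_1,\dots,S_k$ of $S$ transitively. Since $\PSL_2(11)$ has no proper subgroup of index between $2$ and $10$ (cf. the list in Theorem \ref{theorem-transitive-groups}), this forces $k\ge 11$.

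Finally I would close with a degree estimate. For each $i$, the number $S_i\cdot(-2K_X)^2$ is the self-intersection of the ample line bundle $\OOO_X(-2K_X)|_{S_i}$ on the projective surface $S_i$, hence a positive integer, so $S_i\cdot(-K_X)^2\ge\frac14$. Since $S\ge S_1+\dots+S_k$ as effective divisors,
\[
\frac32=(-K_X)^3=S\cdot(-K_X)^2\ge\sum_{i=1}^k S_i\cdot(-K_X)^2\ge\frac k4\ge\frac{11}{4},
\]
a contradiction. Therefore $\h^0(X,-K_X)\ge 2$, i.e. $\dim|-K_X|>0$.

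The step I expect to be the main obstacle is the bookkeeping in orbifold Riemann--Roch — pinning down $-K_X\cdot c_2(X)=\frac{15}{2}$ and the local term $-\frac18$ — because after that the conclusion hinges entirely on the lone numerical possibility $-K_X^3=3/2$, and it is precisely this value that is destroyed by combining the transitivity of the $G$-action on the (at least $11$) components of $S$ with the crude bound $S_i\cdot(-K_X)^2\ge\frac14$.
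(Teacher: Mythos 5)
Your proof is correct and takes essentially the same route as the paper's: orbifold Riemann--Roch with $-K_X\cdot c_2(X)=24-3n/2$ and eleven $\frac12(1,1,1)$ contributions gives $\dim|-K_X|=\frac12(-K_X)^3-\frac34$, the only degenerate case is $(-K_X)^3=3/2$, and that case is killed by applying Lemma \ref{lemma-hyp-sect} to the unique invariant member and then an integrality/degree count on its components. The only (cosmetic) difference is the last step: the paper deduces $m\le 3$ from integrality of $2(-K_X)^2\cdot S_i$ and the transitivity, while you use the cleaner fact that $(-2K_X)^2\cdot S_i$ is a positive integer together with the bound $k\ge 11$ coming from the minimal index of a proper subgroup of $\PSL_2(11)$ --- both close the argument.
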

\begin{proof}
By \cite{Kawamata-1992bF} we have $-K_X\cdot c_2(X)=24-3n/2$.
Hence by the orbifold Riemann-Roch (see \cite{Reid-YPG1987})
\begin{multline*}
\dim |{-}K_X|=\frac 12 (-K_X)^3-\frac1{12}K_X\cdot c_2(X)
+\sum_{P\in \Omega} c_P(-K_X)=
\\
=\frac 12 (-K_X)^3 +2-\frac {n}4=
\frac 12 (-K_X)^3 -\frac {3}4.
\end{multline*}
Put $\dim |{-}K_X|=l$. Then 
$(-K_X)^3=2l+3/2$.
In particular, $l\ge 0$ and $|{-}K_X|\neq \emptyset$.
Assume that $\dim |{-}K_X|= 0$. Then $(-K_X)^3=3/2$. 
Let $S\in |{-}K_X|$ be (a unique) member.
By Lemma \ref{lemma-hyp-sect} the surface $S$ is reducible and $G$
transitively acts on its components.
Write $S=\sum _{i=1}^m S_i$.
Then $m(-K_X)^2\cdot S_i=(-K_X)^3=3/2$.
Since $2(-K_X)^2\cdot S_i$ is an integer, we have $m\le 3$, a contradiction.
\end{proof}

\begin{lemma}
\label{lemma-LC-property-linear-systems}
The pair $(X,|{-}K_X|)$ is canonical.
\end{lemma}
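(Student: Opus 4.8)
The plan is to show that if $(X,|-K_X|)$ fails to be canonical then the minimal center of log canonical singularities of a suitably perturbed pair is $G$-invariant, and to derive a contradiction from the classification of possible centers together with the structure of $G\simeq\PSL_2(11)$ and its point-stabilizers $G_P\simeq\A_5$. First I would recall that $|-K_X|\neq\emptyset$ and $\dim|-K_X|=l\geq 1$ by the two preceding lemmas, and pick a general member $S\in|-K_X|$; by Lemma \ref{lemma-hyp-sect} (applied with this $S$, which is numerically proportional to $-K_X$) the pair $(X,S)$ is log canonical, so the obstruction to being canonical must come from the base locus of the movable linear system, not from a fixed component — indeed by Corollary \ref{corollary-fixed-components-linear-systems} the system $|-K_X|$ has no fixed components.

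The core step is a standard application of Shokurov–Kawamata connectedness and the theory of minimal log canonical centers (as already invoked in the proof of Lemma \ref{lemma-hyp-sect}). Suppose $(X,\tfrac{1}{l}\sum_{i=1}^{l} H_i)$, or more simply $(X,|-K_X|)$, is not canonical. Then for a general member the pair $(X, H)$ with $H\in|-K_X|$, or a small $\QQ$-Cartier perturbation $D\sim_{\QQ} -\lambda K_X$ with $0<\lambda\leq 1$ obtained by taking $G$-averaged combinations of members, fails to be Kawamata log terminal, and the connectedness theorem forces the non-klt locus to be small; averaging over $G$ makes the minimal center $V$ a $G$-invariant subvariety which is either a point or a smooth rational curve (exactly as in the cited argument of \cite{Shokurov-1992-e-ba}, \cite{Mori-Prokhorov-2008d}). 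If $V$ is a point it is a $G$-fixed point, contradicting Lemma \ref{lemma-fixed-point} since $G\simeq\PSL_2(11)$ does not embed into $\Cr_2(\CC)$. If $V\simeq\PP^1$ then $G$ acts on $\PP^1$, forcing $G\hookrightarrow\PGL_2(\CC)$, again impossible for $\PSL_2(11)$ (its smallest faithful permutation action is on $11$ points, and $\PGL_2(\CC)$ has no subgroup isomorphic to $\PSL_2(11)$).

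One subtlety I would be careful about: to run the connectedness/adjunction machinery one needs $-(K_X + D)$ to be nef (or ample) for the perturbed boundary $D$, and one needs $X$ to be $\QQ$-factorial with the relevant divisor $\QQ$-Cartier. Here $X$ is a $G\QQ$-Fano threefold, so $-K_X$ is ample and $G$-invariant Weil divisors are $\QQ$-Cartier; a general $G$-invariant $\QQ$-linear combination of anticanonical members is $\QQ$-Cartier and numerically $-\lambda K_X$ for some rational $0<\lambda<1$, so $-(K_X+D)$ is ample — this is precisely the setup of Lemma \ref{lemma-hyp-sect}'s proof, which I may cite wholesale. The main obstacle is therefore purely bookkeeping: ensuring that the non-canonical (as opposed to merely non-log-canonical) hypothesis still yields a center of dimension $\leq 1$ after $G$-symmetrization, i.e. that the center cannot be a surface. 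But a surface center would mean a component of multiplicity $\geq 1$ in the base locus of $|-K_X|$, hence a fixed component, which is excluded by Corollary \ref{corollary-fixed-components-linear-systems}; and a non-canonical-but-lc center of dimension $1$ that is not $\PP^1$ is ruled out because $V$, being a minimal lc center of an lc pair, is normal with $K_V + (\text{different}) \sim_{\QQ} (K_X+D)|_V$ of negative degree on each component, forcing each component to be rational, and then $G$ transitivity plus Lemma \ref{lemma-fixed-point} (no fixed point on the finitely many components, no action on $\PP^1$) gives the contradiction. This closes the proof that $(X,|-K_X|)$ is canonical.
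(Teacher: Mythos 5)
There is a genuine gap at the core step of your argument. You pass from ``$(X,|-K_X|)$ is not canonical'' to ``a suitable boundary $D\sim_{\QQ}-\lambda K_X$ with $0<\lambda\le 1$ is not klt, hence has a non-klt locus to which the connectedness theorem applies.'' This implication is false in general. Non-canonicity only gives a divisorial valuation $E$ over $X$ with $a(E,X)<\mult_E\HHH$, where $\HHH=|-K_X|$; for a $G$-averaged boundary $D=\frac{\lambda}{N}\sum H_j$ with general $H_j\in\HHH$ one gets $\mult_E D=\lambda\,\mult_E\HHH$, and to make $(X,D)$ non-klt along $E$ you would need $\mult_E D\ge a(E,X)+1$. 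If, say, $a(E,X)=2$ and $\mult_E\HHH=2+\varepsilon$ with small $\varepsilon>0$, no choice of $\lambda\le 1$ achieves this, so no lc center is produced while $-(K_X+D)$ stays nef; and choosing $\lambda>1$ to force non-klt-ness destroys the positivity that the connectedness theorem requires. So the dichotomy ``$G$-invariant minimal center is a point or $\PP^1$'' never gets off the ground. (Your fallback observations --- that a fixed component is excluded by Corollary \ref{corollary-fixed-components-linear-systems}, and that $(X,S)$ is lc for $S\in|-K_X|$ by Lemma \ref{lemma-hyp-sect} --- are correct but only rule out failure of \emph{log canonicity}, which is strictly weaker than what the lemma asserts.)

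The paper closes exactly this gap by a different mechanism (Alexeev's construction). One takes the canonical threshold $c<1$ of $(X,\HHH)$, a $G$-equivariant terminal modification $f\colon\tilde X\to X$ of $(X,c\tilde\HHH)$ extracting the offending valuations $E_i$ with $K_{\tilde X}+\tilde\HHH+\sum a_iE_i=f^*(K_X+\HHH)\sim 0$ and $a_i>0$, and then runs the $G$-MMP on $(\tilde X,c\tilde\HHH)$ to reach a model $\bar X$ with $\rho(\bar X)^G=1$ on which $\sum a_i\bar E_i$ is a nontrivial effective $G$-invariant divisor with $-(K_{\bar X}+\sum a_i\bar E_i)\sim\bar\HHH$ ample; this contradicts the nefness statement of Lemma \ref{lemma-hyp-sect}. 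If you want to keep a connectedness-flavoured argument you must first convert the non-canonical valuations into genuinely extracted divisors on a new birational model, as the paper does; working directly on $X$ with perturbed boundaries cannot detect discrepancies lying strictly between $-1$ and $0$.
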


\begin{proof}
Put $\HHH:=|{-}K_X|$.
By Corollary \ref{corollary-fixed-components-linear-systems} 
the linear system $\HHH$ has no fixed components.
We apply a $G$-equivariant version of a construction 
\cite[\S 4]{Alexeev-1994ge}.
Take $c$ so that the pair $(X,c\HHH)$ is canonical but not terminal.
By our assumption $0<c<1$.
Let $f\colon (\tilde X,c \tilde \HHH)\to (X,c\HHH)$ 
be a $G$-equivariant $G\QQ$-factorial terminal modification (terminal model).
We can write 
\[
\begin{array}{lll}
K_{\tilde X}+c\tilde \HHH&=&f^*(K_X+c\HHH),
\\
K_{\tilde X}+\tilde \HHH+\sum a_iE_i&=&f^*(K_X+\HHH)\sim 0, 
\end{array}
\]
where $E_i$ are $f$-exceptional divisors and $a_i>0$.
Run $(\tilde X,c \tilde \HHH)$-MMP:
\[
(\tilde X,c \tilde \HHH) \dashrightarrow (\bar X,c \bar \HHH).
\]
As in \ref{main-reduction} $\bar X$ is a Fano threefold with 
$G\QQ$-factorial terminal singularities and $\rho(\bar X)^G=1$.
We also have $0\sim K_{\bar X}+\bar \HHH+\sum a_i\bar E_i$.
Here $\sum a_i\bar E_i$ is a non-trivial effective invariant divisor
such that $-(K_{\bar X}+\sum a_i\bar E_i)\sim \bar \HHH$
is ample. This contradicts Lemma \ref{lemma-hyp-sect}. 
\end{proof}

\begin{lemma}
\label{lemma-index2-big}
The image of the \textup($G$-equivariant\textup) rational map 
$\phi: X\dashrightarrow \PP^l$ given by 
the linear system $|{-}K_X|$ 
is three-dimensional.
\end{lemma}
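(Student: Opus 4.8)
The plan is to rule out $\dim\phi(X)=1$ and $\dim\phi(X)=2$; since $\dim|-K_X|=l>0$ the image is at least a curve, and it is of course at most $3$-dimensional, so this suffices. In either case I would first take a $G$-equivariant resolution $\tilde X\to X$ of the indeterminacy of $\phi$ (using \cite{Abramovich-Wang}), so that $\phi$ becomes a morphism $\tilde\phi\colon\tilde X\to Y:=\phi(X)\subset\PP^l$, and then pass to the $G$-equivariant Stein factorization $\tilde X\to W\to Y$. Since $\tilde X$ is rationally connected (being birational to the Fano threefold $X$), the normal variety $W$ is dominated by a rationally connected variety, hence $W$ is a rational surface if $\dim W=2$ and $W\cong\PP^1$ if $\dim W=1$; moreover $G$ acts biregularly on $W$.

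Suppose $\dim\phi(X)=2$. If $G$ acts non-trivially on $Y$, then, $W\to Y$ being a finite $G$-equivariant surjection, $G$ also acts non-trivially — hence faithfully, $G$ being simple — on the rational surface $W$, giving an embedding $G\hookrightarrow\Bir(W)\cong\Cr_2(\CC)$; this contradicts our standing assumption. So $G$ acts trivially on $Y$, and since $Y$ spans $\PP^l=\PP(\Ho^0(X,-K_X)^*)$ this forces $G$ to act trivially on all of $\PP^l$, hence trivially on $|-K_X|$: every anticanonical member is $G$-invariant. By Corollary \ref{corollary-fixed-components-linear-systems} the system $|-K_X|$ has no fixed components, and as $\dim\phi(X)>1$ it is not composed with a pencil; hence a general member $S\in|-K_X|$ is irreducible. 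But then $S$ is a $G$-invariant irreducible divisor with $K_X+S\sim0$, contradicting Lemma \ref{lemma-hyp-sect}.

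Suppose $\dim\phi(X)=1$. Then $|-K_X|$, having no fixed components, is composed with a pencil with rational base $W\cong\PP^1$, on which $G$ acts. A non-trivial action would be faithful, yielding $\PSL_2(11)\hookrightarrow\Aut(\PP^1)=\PGL_2(\CC)$, which is impossible; so $G$ acts trivially on $W$, hence (as above) trivially on $\PP^l$ and on $|-K_X|$, so every member of $|-K_X|$ is $G$-invariant. A general such member is the preimage under $\tilde\phi$ of a general effective divisor of degree $d:=\deg Y\ge1$ on $W$, i.e. a sum of $d$ connected, $G$-fixed fibres. If $d\ge2$ this member is reducible and, by Lemma \ref{lemma-hyp-sect} together with the $G\QQ$-factoriality of $X$, the group $G$ must permute its components transitively, contradicting the fact that it fixes each of them. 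If $d=1$, then $l=1$, the general member is a single irreducible fibre $S$ with $K_X+S\sim0$, and Lemma \ref{lemma-hyp-sect} is again violated. This exhausts all cases, so $\dim\phi(X)=3$.

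The main obstacle is the careful handling of the sub-cases in which $G$ acts trivially on the image $Y$: one must check that triviality on the spanning subvariety $Y\subset\PP^l$ genuinely propagates to triviality on the full system $|-K_X|$, and that the relevant general members are honestly irreducible (so that Lemma \ref{lemma-hyp-sect} applies verbatim; in particular one should note that Lemma \ref{lemma-hyp-sect} also excludes a general member of the form $mS_0$ with $m\ge2$). The remaining ingredients — equivariant resolution, Stein factorization, rational connectedness of $X$, and the running hypothesis that $\PSL_2(11)$ embeds neither in $\PGL_2(\CC)$ nor in $\Cr_2(\CC)$ — enter only formally.
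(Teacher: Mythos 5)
Your proof is correct and follows the same strategy as the paper: if $\dim\phi(X)\le 2$, the image is rational (being dominated by the rationally connected $X$), so by the standing assumption $G$ acts trivially on it, hence — the image being nondegenerate — trivially on $\PP^l$ and on $|-K_X|$. The paper then simply invokes Lemma \ref{lemma-Gorenstein-invariant-pencil} (which forbids $\dim \Ho^0(X,-K_X)^G\ge 2$, while triviality of the action yields $l+1\ge 2$ invariant sections), whereas you unwind that citation and re-derive the contradiction directly from Lemma \ref{lemma-hyp-sect} via a case analysis on general members; both endgames are valid.
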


\begin{proof}
Let $Y:=\phi(X)$. 
Since $X$ is rationally connected, $G$ acts trivially on $Y$.
This contradicts Lemma \ref{lemma-Gorenstein-invariant-pencil}.
\end{proof}
Recall that non-Gorenstein points $P_1,\dots, P_{11}$ of $X$ are of type $\frac12(1,1,1)$.
Let $f: \tilde X\to X$ be blow up of $P_1,\dots, P_{11}$ 
 and let $E=\sum E_i$ be the exceptional divisor, where $E_i=f^{-1}(P_i)$.
Then $\tilde X$ is smooth over $P_i$, it has at worst Gorenstein terminal 
singularities, $E_i\simeq \PP^2$, and $\OOO_{E_i}(-K_{\tilde X})=\OOO_{\PP^2}(1)$.
Put $\HHH:=|{-}K_X|$ and let $\tilde \HHH$ be the birational transform.
Since the pair $(X,\HHH)$ is canonical, we have
\[
K_{\tilde X}+\tilde \HHH\sim f^*(K_X+\HHH)\sim 0. 
\]
Hence, $|{-}K_{\tilde X}|=\tilde \HHH$.

\begin{lemma}
The linear system $\tilde \HHH$ is base point free.
\end{lemma}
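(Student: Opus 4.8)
The plan is to run a $G$-equivariant minimal model program argument analogous to the one already used in the proof of Lemma \ref{lemma-LC-property-linear-systems}, combined with the base-point analysis for $|-K_X|$ on Fano threefolds. First I would observe that, by Lemma \ref{lemma-index2-big}, the map $\phi\colon X\dashrightarrow\PP^l$ given by $|-K_X|$ has three-dimensional image, so $\tilde\HHH=|-K_{\tilde X}|$ is not composed with a pencil; moreover by Lemma \ref{lemma-LC-property-linear-systems} the pair $(X,|-K_X|)$ is canonical, hence so is $(\tilde X,\tilde\HHH)$ since $f$ is crepant for this system. I would then argue that $\Bs\tilde\HHH$, if nonempty, is $G$-invariant and, because $\tilde X$ has only Gorenstein terminal singularities and $-K_{\tilde X}=\tilde\HHH$ is nef and big, we can invoke the same structure results for anticanonical base loci used in Lemma \ref{lemma-base-point-free} (after \cite{Shin1989}): $\Bs\tilde\HHH$ is either a single point lying in the smooth locus or a smooth rational curve in the smooth locus.

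The curve case is eliminated exactly as in Lemma \ref{lemma-base-point-free}: $G$ acts nontrivially on a rational curve by Lemma \ref{lemma-fixed-point}, forcing $G\subset\Aut(\PP^1)$, hence $G\simeq\A_5$, contradicting $G\simeq\PSL_2(11)$. For the point case, one must be slightly more careful because $\tilde X$ can be singular; but the base point $P$ is forced into the smooth locus of $\tilde X$, so $G_P$ (and in fact, since $P$ is $G$-fixed, all of $G$) acts on $T_{P,\tilde X}\simeq\CC^3$, hence $G=G_P\subset\GL_3(\CC)$, and Theorem \ref{theorem-classification-3} shows $G$ is $\A_5$, $\A_6$ or $\PSL_2(7)$, all of which embed into $\Cr_2(\CC)$ — a contradiction with our standing hypothesis. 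Alternatively one can note directly that a $G$-fixed point on $\tilde X$ pushes down to a $G$-fixed point on $X$ (or else lies on one of the $E_i\simeq\PP^2$, but the $E_i$ are permuted and the orbit has size $11$, so no $E_i$ is $G$-invariant, and $\Bs\tilde\HHH$ meeting an $E_i$ would contradict $\OOO_{E_i}(-K_{\tilde X})=\OOO_{\PP^2}(1)$ being base-point free on $E_i$), and then apply Lemma \ref{lemma-fixed-point} to $X$ directly.

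The main obstacle I anticipate is making the reduction to \cite{Shin1989}-type statements fully rigorous in the mildly singular setting: those results are typically stated for Fano threefolds with Gorenstein canonical (or terminal) singularities and nef-and-big anticanonical class, and one has to check that $\tilde X$ — obtained by blowing up eleven $\frac12(1,1,1)$ points on a $G\QQ$-Fano $X$ — indeed falls in that class. This is exactly what was recorded just before the statement ($\tilde X$ has at worst Gorenstein terminal singularities, $-K_{\tilde X}$ nef and big, with $-K_{\tilde X}^3=(-K_X)^3-11/4>0$ from the Riemann--Roch computation), so the ingredients are in place; the remaining work is bookkeeping. A clean alternative that sidesteps \cite{Shin1989} is the MMP argument: if $\tilde\HHH$ had a base point, run a $G$-equivariant terminalization/MMP on $(\tilde X,c\tilde\HHH)$ for suitable $c<1$ as in Lemma \ref{lemma-LC-property-linear-systems}, producing a $G\QQ$-Fano $\bar X$ with a nontrivial $G$-invariant effective divisor $\sum a_i\bar E_i$ satisfying $-(K_{\bar X}+\sum a_i\bar E_i)$ ample, contradicting Lemma \ref{lemma-hyp-sect}; I would likely present this MMP version as the primary argument since it is self-contained within the paper's machinery.
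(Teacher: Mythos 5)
There is a genuine gap, on two fronts. First, the argument you propose to make primary --- the $G$-equivariant MMP on $(\tilde X,c\tilde\HHH)$ as in Lemma \ref{lemma-LC-property-linear-systems} --- proves the wrong statement. That construction detects failure of \emph{canonicity} of the pair, not the presence of base points: it only produces the contradiction when $c<1$, i.e.\ when $(\tilde X,c\tilde\HHH)$ ceases to be canonical for some $c<1$. But a canonical pair can perfectly well have a nonempty base locus (this is exactly the situation Lemma \ref{lemma-base-point-free} is designed to handle), and since $(X,\HHH)$ is already known to be canonical and $f$ is crepant for the pair, the terminal model is trivial and the MMP route yields nothing. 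So the ``clean alternative'' cannot establish freeness.

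Second, your backup route via Lemma \ref{lemma-base-point-free} and \cite{Shin1989} is the correct endgame, but it presupposes that $-K_{\tilde X}$ is nef and big --- and nefness is precisely the nontrivial content of the paper's proof, not something ``recorded just before the statement'' (only smoothness over $P_i$, $E_i\simeq\PP^2$, $\OOO_{E_i}(-K_{\tilde X})=\OOO_{\PP^2}(1)$, and $|-K_{\tilde X}|=\tilde\HHH$ were recorded). The paper establishes nefness as follows: the restriction $\tilde\HHH|_{E_i}$ is a possibly \emph{incomplete} $G_{P_i}$-invariant subsystem of the lines on $E_i\simeq\PP^2$ with $G_{P_i}\simeq\A_5$ acting irreducibly, hence without a fixed point on $\PP^2$, so this subsystem is free and $\Bs\tilde\HHH\cap E_i=\emptyset$; consequently any curve $C\subset\Bs\tilde\HHH$ has $E_i\cdot C=0$, is not contracted by $f$, and satisfies $\tilde\HHH\cdot C=(-K_X)\cdot f(C)>0$, which together with $\tilde\HHH\cdot C\ge 0$ off the base locus gives nefness; bigness then comes from Lemma \ref{lemma-index2-big}, and Lemma \ref{lemma-base-point-free} applies. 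Your parenthetical appeal to ``$\OOO_{E_i}(-K_{\tilde X})=\OOO_{\PP^2}(1)$ being base-point free'' concerns the line bundle (the complete system) and does not by itself control the restricted incomplete system; the $\A_5$-invariance is the essential input you are missing. (Also, $(-K_{\tilde X})^3=(-K_X)^3-11/2$, not $(-K_X)^3-11/4$, though this is peripheral.)
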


\begin{proof}
Note that the restriction $\tilde \HHH|_{E_i}=|{-}K_{\tilde X}||_{E_i}$ 
is a (not necessarily complete) linear system
of lines on $E_i\simeq\PP^2$. 
Since this linear system is $G_{P_i}$-invariant,
where $G_{P_i}\simeq \A_5$,
it is base point free. Hence $\Bs\tilde \HHH\cap E_i=\emptyset$.
In particular, this implies that for any curve 
$C\subset \Bs\tilde \HHH$, we have 
$E_i\cdot C=0$ and so $\tilde \HHH\cdot C=\HHH\cdot f(C)>0$.
Hence, $\tilde \HHH$ is nef. 
By Lemma \ref{lemma-index2-big} it is big.
Then the assertion follows by Lemma \ref{lemma-base-point-free}.
\end{proof}

Now we are in position to finish the proof of Theorems
\ref{theorem-main-1} and \ref{theorem-main-2}.
Note that the divisors $E_i$ are linear independent elements 
of $\Pic (\tilde X)$. Hence, $\rho(\tilde X)>11$.
If $\tilde X$ is a Fano threefold, then by
Theorem \ref{theorem-Namikawa} and Remark
\ref{remark-Namikawa} there is a smoothing $\tilde X_t$
with $\rho(\tilde X_t)>11$.
This contradicts the classification of smooth Fano threefolds with 
$\rho>1$ \cite{Mori1981-82}.
By Lemma \ref{lemma-very-ample} the linear system $|{-}K_{\tilde X}|$
determines a birational contraction $\varphi:\tilde X\to \bar X=\bar X_{2g-2}\subset \PP^{g+1}$
whose image is an anticanonically embedded Fano threefold with at worst canonical singularities.
Here $g$ is the genus of $\bar X$ (see \ref{main-assumption-2}). 
By Lemma \ref{lemma-quadrics}  the variety $\bar X_{2g-2}\subset \PP^{g+1}$ is an intersection of quadrics.
In particular, $g\ge 5$.
Since $\rho(\tilde X)^G=2$, 
$\rho(\bar X)^G=1$.
Let $\bar E_i:=\varphi(E_i)$ and $\bar E:=\varphi(E)$.

\begin{claim}\label{claim-cl-Z}
The group $\Cl(X)^G$ is generated by $-K_X$. 
\end{claim}
\begin{proof}
Assume that $\Cl(X)^G$  contains a torsion element, say  $T$.
Then $2T$ is Cartier and so $2T\sim 0$
(because $\Pic(X)$ is torsion free).
As above, let $\Omega\subset X$  be the set of points where $K_X$ 
is not Cartier. 
Since $G$ acts on $\Omega$ transitively, $T$ is not Cartier at all points $P\in \Omega$.
On the other hand, by the orbifold  Riemann-Roch (see \cite{Reid-YPG1987}) and Kawamata-Viehweg vanishing theorem we have 
\[
0=\chi(\OOO_X(T))=1+\sum _{P\in \Omega} c_P(T)= 1- \frac{|\Omega|}{8},
\]
where $c_P(T)=-1/8$ for all $P\in \Omega$ (because
this $P$ is a cyclic quotient of type $\frac12(1,1,1)$).
This gives us $|\Omega|=8$, a contradiction.

Therefore,  $\Cl(X)^G\simeq \ZZ$. Let $A$ be the ample generator of $\Cl(X)^G$ and let 
$-K_X=qA$ for some $q\in \ZZ_{>0}$. Since $K_X$ is not Cartier, $q>2$. Again by the orbifold  Riemann-Roch
\begin{multline*}
\label{eq-RR}
\chi(\OOO_X(-A))=
-\frac{A^3}{12}(q-1) (q-2)
-\frac{A\cdot c_2}{12} +\sum_{P\in \Omega} c_P(-A)+ 
1 <
\\
<\sum_{P\in \Omega} c_P(-A)+ 
1 =-\frac{11}{8}+ 1<0.
\end{multline*}
On the other hand, by the Kawamata-Viehweg vanishing theorem $\chi(\OOO_X(-A))=0$, a contradiction.
\end{proof}

Take a general member $\bar H\in |{-}K_{\bar X}|$.
By Bertini's theorem $\bar H$ is a K3 surface with at worst 
Du Val singularities. 
Put $C_i:=\bar E_i\cap \bar H$.

\begin{claim}
$C_1,\dots,C_{11}$ are disjointed smooth rational curves contained into the 
smooth locus of $\bar H$. 
\end{claim}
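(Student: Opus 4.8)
The plan is to pin down $\varphi|_{E_i}$ first, then cut by a general hyperplane. Since $\varphi$ is given by $\tilde\HHH=|-K_{\tilde X}|$ and we have already seen that $\tilde\HHH|_{E_i}$ is a base point free (hence nonzero) linear system of lines on $E_i\simeq\PP^2$, the restriction map $\Ho^0(\tilde X,-K_{\tilde X})\to \Ho^0(E_i,\OOO_{E_i}(-K_{\tilde X}))\simeq\CC^3$ is a nonzero $\tilde G_{P_i}$-equivariant map into an irreducible $3$-dimensional representation of the central extension $\tilde G_{P_i}$ of $\A_5$ (Theorem \ref{theorem-classification-3}), hence surjective. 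Thus $\varphi|_{E_i}\colon E_i\to \bar E_i$ is the embedding by the complete system of lines, so $\bar E_i\simeq\PP^2$ is a linearly embedded plane in $\PP^{g+1}$ and $|-K_{\bar X}|$ restricts to $\bar E_i$ as the full system of lines.

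Next I would reduce the claim to the assertion that $\Sigma_i:=\Sing(\bar X)\cap \bar E_i$ is a finite set for every $i$. Granting this, a general $\bar H\in|-K_{\bar X}|$ meets the plane $\bar E_i$ in a general line $C_i\simeq\PP^1$, which is smooth rational and, being general in $\PP^2$, avoids the finite set $\Sigma_i$. Since $|-K_{\bar X}|$ is base point free (it is the pushforward of the base point free system $\tilde\HHH$), Bertini gives $\Sing(\bar H)\subseteq \Sing(\bar X)\cap\bar H$, so $C_i\cap\Sing(\bar H)\subseteq C_i\cap\Sigma_i=\emptyset$ and $C_i$ lies in the smooth locus of $\bar H$. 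For disjointness, recall $E_i\cap E_j=\emptyset$ on $\tilde X$; since $\varphi$ is crepant onto the Gorenstein canonical Fano $\bar X$, it is an isomorphism over the smooth locus of $\bar X$, hence $\bar E_i\cap \bar E_j\subseteq \Sing(\bar X)$, and therefore $C_i\cap C_j\subseteq \bar E_i\cap \bar E_j\cap\bar H\subseteq \Sigma_i\cap C_i=\emptyset$.

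The technical heart is finiteness of $\Sigma_i$, i.e. that $\bar X$ is not singular along a curve contained in $\bar E_i$. Here I would use that $\tilde X$ is smooth along $E_i$ and that $\varphi$ contracts no curve inside $E_i$ (because $-K_{\tilde X}|_{E_i}=\OOO_{\PP^2}(1)$ is ample), so $\varphi|_{E_i}$ is finite and $\varphi^{-1}(\bar E_i)$ contains no component other than $E_i$ mapping onto $\bar E_i$. If $\bar X$ were singular along a curve $D\subseteq\bar E_i$, crepancy would force a $\varphi$-exceptional divisor $E'$ with $\varphi(E')=D$ and $E'\cap E_i\neq\emptyset$; one then rules this out by combining the $G_{P_i}\simeq\A_5$-action on $\bar E_i\simeq\PP^2$ (so that $D$, or its $\A_5$-orbit, is an $\A_5$-invariant plane curve) with the numerical bookkeeping $-K_{\tilde X}=f^*(-K_X)-\tfrac12\sum_j E_j$ and the ampleness of $-K_X$ on $X$, which constrains the $-K_{\tilde X}$-degrees of the curves swept out on $E'$. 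Excluding this one-dimensional exceptional contribution over $\bar E_i$ is the step that requires care; everything else is formal.
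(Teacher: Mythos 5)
Your overall strategy is genuinely different from the paper's, and it hinges on a step you yourself flag as unproven: the finiteness of $\Sigma_i=\Sing(\bar X)\cap\bar E_i$. That step is not a technicality to be deferred --- it is essentially the whole content of the claim, and your sketch for it (a curve of singularities forces a crepant divisor $E'$ over it, to be excluded by ``$\A_5$-invariance plus numerical bookkeeping'') does not close. The numerical data you invoke ($-K_{\tilde X}=f^*(-K_X)-\tfrac12\sum E_j$ and ampleness of $-K_X$) only says that a $\varphi$-contracted curve $C$ satisfies $f^*(-K_X)\cdot C=\tfrac12\sum E_j\cdot C$, and for a fiber of $E'\to D$ with $D\subset\bar E_i$ both sides can perfectly well be positive; no contradiction falls out. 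Note also that the paper establishes that $\varphi$ is small only \emph{after}, and as a consequence of, this claim, so you should not expect the geometry of the exceptional locus of $\varphi$ to be available as an input here. (The parts of your argument that do work --- $\bar E_i$ is a linearly embedded plane, $|-K_{\bar X}|$ restricts to the complete system of lines by irreducibility of the $3$-dimensional $\tilde G_{P_i}$-representation, $\varphi$ is an isomorphism over the smooth locus of $\bar X$ so $\bar E_i\cap\bar E_j\subset\Sing(\bar X)$ --- are fine, but they all feed into the missing step.)

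The paper avoids this entirely by working with the intersection lattice of the K3 surface $\bar H$. Double transitivity of $\PSL_2(11)$ on the eleven points makes $C_i^2$ and $C_i\cdot C_j$ ($i\neq j$) independent of the indices; $C_i\cdot C_j>0$ is excluded because it would force every pair of planes $\bar E_i$, $\bar E_j$ to meet in a line, so that $\bar E_1\cup\bar E_2\cup\bar E_3$ spans a $\PP^3$, contradicting the fact that $\bar X$ is an intersection of quadrics (Lemma \ref{lemma-quadrics}); $C_k^2=0$ is excluded because the eleven $C_k$ would then be Kodaira fibers of an elliptic fibration on (the minimal resolution of) $\bar H$, each with at least three components, forcing $\rho\ge 23$; and the same Picard-number count rules out singularities of $\bar H$ near the $C_k$ once $C_k^2<0$ gives $\rk\Cl(\bar H)\ge 12$. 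If you want to salvage your route, you would need to supply an actual proof that no $\varphi$-exceptional divisor dominates a curve in $\bar E_i$; I do not see how to do that without re-importing something like the paper's lattice argument.
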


\begin{proof}
Since $\bar H$ is Cartier, the number 
$\bar E_i\cdot \bar E_j\cdot \bar H$, where $1\le i,\, j\le 11$,
is well-defined and coincides with the intersection number $C_i\cdot C_j$
of curves $C_i:=\bar E_i\cap \bar H$ and $C_j:=\bar E_j\cap \bar H$
on $\bar H$.
Clearly, the numbers 
$C_i^2=\bar E_i^2\cdot \bar H$ for $1\le i\le 11$ do not depend on $i$.
Since the action of $G$ on $\{\bar E_i\}$ is doubly transitive \cite{atlas}, the numbers 
$C_i\cdot C_j=\bar E_i\cdot \bar E_j\cdot \bar H$ for $1\le i\neq j\le 11$ also 
do not depend on $i$, $j$.

Since $(-K_{\tilde X})^2\cdot E_i=1$, the surfaces $\bar E_i$ are planes in $\PP^{g+1}$
and every $C_i$ is a line on $\bar E_i$.
If $C_i\cdot C_j>0$ for some $i\neq j$, then
$\bar E_i\cap \bar E_j$ is a line. Since $G$ acts doubly transitive on 
$\{\bar E_i\}$, the intersection $\bar E_i\cap \bar E_j$ is a line
for all $i\neq j$. Hence, the linear span of $\bar E_1\cup \bar E_2\cup \bar E_3$
is a three-dimensional projective subspace $\PP^3\subset \PP^{g+1}$.
In this case, $\bar X\cap \PP^3$ cannot be an intersection of 
quadrics. This contradicts Lemma \ref{lemma-quadrics}.

Thus we may assume that $C_i\cdot C_j=0$ for all $i\neq j$.
By the Hodge index theorem
$C_k^2\le 0$ for all $k$.
If $C_1^2=0$, then $C_1$ is a nef $\QQ$-Cartier divisor on a
K3 surface with at worst Du Val singularities.
By the cone theorem,
for some $m$, the linear system $|mC_1|$
determines an elliptic fibration $\psi: \bar H\to \PP^1$ and 
all the curves $C_k$ are degenerate fibers of $\psi$.
Let $\mu: \hat H\to \bar H$ be the minimal resolution,
let $F_k:=\mu^{-1}(C_k)$ be the degenerate fiber corresponding to 
$C_k$, and let $\hat C_k$ be the proper transform of $C_k$.
Then $\hat H$ is a smooth K3 surface.
Since $C_k$ is smooth, $\hat C_k\cdot (F_k-\hat C_k)=1$.
Using Kodaira's classification of degenerate fibers of
elliptic fibrations we see that $F_k$ has at least 
three components. 
But then $\rho(\hat H)\ge 23$, a contradiction.

Therefore, $C_k^2<0$ for all $k$. In particular, $\rk \Cl(\bar H)\ge 12$.
If $\bar H$ is singular at a point on $C_k$, then, 
as above, considering the minimal resolution $\mu: \hat H\to \bar H$ 
one can show that $\rho(\hat H)\ge 23$, a contradiction.  
Hence $\bar H$ is smooth near 
$C_k$. So, all the $C_k$ are $(-2)$-curves contained into 
the smooth part of $\bar H$. 
\end{proof}

Clearly, fibers of $\varphi$ meet $\sum E_i$ (otherwise 
$\varphi$ is an isomorphism near $E_i$ and then $\rho(\bar X)^G>1$).
Since $E_i\simeq \PP^2$, $\varphi$ cannot contract divisors to points.
Assume that $\varphi$ contracts divisors $D_l$ to curves $\Gamma_l$.
Then $\Gamma_l\subset E_i$ for some $i$. Since $\varphi$ is $K$-trivial,
$\bar X$ is singular along $\Gamma_l$ and
$\bar H$ is singular at point $\Gamma_l\cap \bar H$.
Since $\Gamma_l\cap \bar H\subset C_i$, we get a contradiction
with the above claim.

Therefore $\varphi$ does not contract any divisors, i.e., 
it contracts only a finite number of curves.
Then $\bar X$ is a Fano threefold with 
Gorenstein terminal (but not $G\QQ$-factorial) singularities.
Consider the following diagram (cf. \cite[Ch. 4]{Iskovskikh-Prokhorov-1999}):
\[
\xymatrix{
&\tilde X\ar[dr]^{\varphi}\ar[dl]_{f}\ar@{-->}[rr]^{\chi}&&X^+\ar[dr]^{f^+}\ar[dl]_{\varphi^+}
\\
X&&\bar X&&Y
}
\]
Here $\chi$ is a $G$-equivariant flop, $\varphi^+$ is a small modification,
and $f^+$ is a $K$-negative $G$-equivariant $G$-extremal contraction.
As in \ref{main-reduction} we may assume that $Y$ is $G\QQ$-Fano threefold with 
$\rho(Y)^G=1$. 
Let $E^+=\sum E_i^+\subset X^+$ be the proper transform of $E=\sum E_i$.
Recall that $G\simeq \PSL_2(11)$. We can write 
\[
\begin{array}{l}
-K_{\tilde X}^3=-K_{X^+}^3=-K_{\bar X}^3=2g-2,
\\[8pt]
(-K_{\tilde X})^2\cdot E=(-K_{X^+})^2\cdot E^+=(-K_{\bar X})^2\cdot \bar E=11,
\\[8pt]
-K_{\tilde X}\cdot E^2=-K_{X^+}\cdot E^+{}^2=-K_{\bar X}\cdot \bar E^2=-22.
\end{array}
\]
Let $D:=\sum D_i$ be the $f^+$-exceptional divisor. By Claim \ref{claim-cl-Z}
we have
$D\sim -\alpha K_{X^+}-\beta E^+$ for some $\alpha$, $\beta\in \NN$.
Therefore, 
\[
\begin{array}{l}
(-K_{\bar X})^2\cdot D=(2g-2)\alpha-11 \beta,
\\[8pt]
-K_{\bar X}\cdot D^2=(2g-2)\alpha^2-22\alpha\beta -22\beta^2.
\end{array}
\]

Assume that $Y$ is not Gorenstein.
Then $Y$ is of the same type as $X$. In particular, $Y$ has 11 
cyclic quotient singular points of index $2$.
In this case $D$ has exactly $11$ components and 
\begin{equation}
\label{equation-intersections-last-S}
(-K_{\bar X})^2\cdot D=11,
\qquad 
-K_{\bar X}\cdot D^2=-22.
\end{equation}
In particular, either $g-1$ or $\alpha$ is divisible by $11$.
Assume that $g-1=11k$, $k\in \NN$. 
Then the above equalities can be rewritten as follows:
\[
\begin{array}{l}
\beta=2 k\alpha-1,
\\[8pt]
0=-1-k\alpha^2+\alpha\beta +\beta^2.
\end{array}
\]
Eliminating $\beta$ we get
\[
0=-1-k\alpha^2+\alpha(2 k\alpha-1) +(2 k\alpha-1)^2=
(\alpha+ 4 k)(k\alpha-1).
\]
Since $\alpha,\, k>0$ we get 
$k=1$ and $g=12$. Hence $\dim \Ho^0(\tilde X,-K_{\tilde X})=14$
and so $\dim \Ho^0(\tilde X,-K_{\tilde X})^G\ge 2$ (because the degrees 
of irreducible representations of $G=\PSL_2(11)$ are 
$1$, $5$, $5$, $10$, $10$, $11$, 
$12$, $12$). This contradicts Lemma 
\ref{lemma-Gorenstein-invariant-pencil}.
Therefore, $\alpha =11k$, $k\in \NN$. Then, as above,
\[
\begin{array}{l}
\beta=2(g-1)k- 1,
\\[8pt]
0=-1-11(g-1)k^2+11k\beta +\beta^2.
\end{array}
\]
Thus
\begin{multline*}
0=-1-11(g-1)k^2+11k(2(g-1)k- 1) +(2(g-1)k- 1)^2=
\\
=(11+ 4(g-1))((g-1)k-1). 
\end{multline*}
Since $g>2$ (see Lemma \ref{lemma-very-ample}) we have a contradiction.

Finally assume that $Y$ is Gorenstein.
By the 
results of \S \ref{section-Gorenstein} either
$Y\simeq X^{\mathrm k}_3\subset \PP^4$ or $Y\simeq X_{14}^{\mathrm a}\subset \PP^{9}$.
In particular, $Y$ is smooth.
If $\dim f^+(D)=0$, then $f^+$ is just blowup of points $Q_1,\dots,Q_l\in Y$ \cite{Cutkosky-1988}. 
Note that $\rk \Cl(X^+)=\rk \Cl(\tilde X)\ge 12$, so $l\ge 11$.
But then $-K_{X^+}^3= -K_Y^3- 8 l<0$, a contradiction. 
Therefore $f^+(D)$ is a (reducible) curve $\Gamma=\sum_{i=1}^l \Gamma_i$.
Here again $l\ge 11$. Write
\[
\varphi^{+*}(-K_{\bar X})=-K_{X^+}=f^{+*}(-K_Y)-D. 
\]
Since the linear system $|{-}K_{\bar X}|$ is very ample, the map 
$Y \dashrightarrow \bar X=\bar X_{2g-2}\subset \PP^{g+1}$ is given by 
a subsystem $f^+_*|{-}K_{X^+}|$ of the linear system $|{-}K_Y|$
consisting of elements passing through $\Gamma$. 
On the other hand, in the case $Y\simeq X_{14}^{\mathrm a}\subset \PP^{9}$,
the representation of $G$ in $\Ho^0(Y,-K_Y)$ is irreducible 
(see Example \ref{example-V14}). Therefore, $Y\simeq X^{\mathrm k}_3\subset \PP^4$.
Moreover, $\dim |{-}K_{\bar X}|< \dim |{-}K_Y|= 14$. 
By Lemma \ref{lemma-hyp-sect} the representation of $G$ in 
$\dim \Ho^0(Y,-K_{Y})^G$ has no trivial subrepresentations.
Since $g\ge 5$ we have only one possibility:
$\dim |{-}K_{\bar X}|=9$, $g=8$.
Further, by Claim \ref{claim-cl-Z} the group  
 $\Cl(X^+)^G$ is a free $\ZZ$-module generated by $-K_{X^+}$ and $E$.
On the other hand,  $\Cl(X^+)^G$ is generated by $\frac12 f^{+*}(-K_Y)$ and $D$.
Hence $\beta=2$, i.e., $D\sim -\alpha K_{X^+}-2E^+$.
Similar to \eqref{equation-intersections-last-S} we have (see e.g. \cite{Kaloghiros})
\[
\begin{array}{rllll}
-K_Y\cdot \Gamma-2p_a(\Gamma)+2&=&(-K_{\bar X})^2\cdot D&=&14\alpha-22,
\\[8pt]
2p_a(\Gamma)-2&=&-K_{\bar X}\cdot D^2&=&14\alpha^2-44\alpha -88.
\end{array}
\]
Thus $\deg \Gamma=-\frac12 K_Y\cdot \Gamma=7\alpha^2 - 15\alpha - 55\ge 45$.
On the other hand, $\Gamma$ is a scheme intersection of members of the linear system
$f^+_*|{-}K_{X^+}|\subset |{-}K_Y|$ (because $|{-}K_{X^+}|$ is base point free).
Hence, $\deg \Gamma\le 12$, a contradiction.
This finishes our proof of Theorems
\ref{theorem-main-1} and \ref{theorem-main-2}.


\begin{thebibliography}{KMMT00}

\bibitem[Adl78]{Adler1978}
A. Adler.
\newblock On the automorphism group of a certain cubic threefold.
\newblock {\em Amer. J. Math.}, 100(6):1275--1280, 1978.


\bibitem[AR96]{AdlerRamanan1996}
A. Adler and S. Ramanan.
\newblock {\em Moduli of abelian varieties}, {\em Lecture Notes
 in Mathematics}  1644.
\newblock Springer-Verlag, Berlin, 1996.



\bibitem[AW97]{Abramovich-Wang}
D. Abramovich and J. Wang.
\newblock {Equivariant resolution of singularities in characteristic 0.}
\newblock {\em Math. Res. Lett.}, 4(2-3):427--433, 1997.

\bibitem[Ale94]{Alexeev-1994ge}
V. Alexeev.
\newblock General elephants of {${\bf Q}$}-{F}ano 3-folds.
\newblock {\em Compositio Math.}, 91(1):91--116, 1994.


\bibitem[ACGH85]{Arbarello1985}
E.~Arbarello, M.~Cornalba, P.~A. Griffiths, and J.~Harris.
\newblock {\em Geometry of algebraic curves. {V}ol. {I}},  {\em
  Grundlehren der Mathematischen Wissenschaften} [{\em Fundamental Principles of
  Mathematical Sciences}]  {267}.
\newblock Springer-Verlag, New York, 1985.

\bibitem[Bli17]{Blichfeldt}
H.~F. Blichfeldt.
\newblock {\em Finite collineation groups}.
\newblock The Univ. Chicago Press, Chicago, 1917.

\bibitem[Bra67]{Brauer-1967}
R. Brauer.
\newblock \"{U}ber endliche lineare {G}ruppen von {P}rimzahlgrad.
\newblock {\em Math. Ann.}, 169:73--96, 1967.

\bibitem[CCN{\etalchar{+}}85]{atlas}
J.~H. Conway, R.~T. Curtis, S.~P. Norton, R.~A. Parker, and R.~A. Wilson.
\newblock {\em {Atlas of finite groups. Maximal subgroups and ordinary
 characters for simple groups. With comput. assist. from J. G. Thackray.}}
\newblock {Oxford: Clarendon Press. XXXIII, 252 p.}, 1985.

\bibitem[CG72]{Clemens-Griffiths}
H. Clemens and P. Griffiths.
\newblock The intermediate {J}acobian of the cubic threefold.
\newblock {\em Ann. of Math. (2)}, 95:281--356, 1972.

\bibitem[Cut88]{Cutkosky-1988}
S. Cutkosky.
\newblock Elementary contractions of {G}orenstein threefolds.
\newblock {\em Math. Ann.}, 280(3):521--525, 1988.

\bibitem[DI06]{Dolgachev-Iskovskikh}
I. Dolgachev and V. Iskovskikh.
\newblock Finite subgroups of the plane cremona group.
\newblock to appear in {\em Algebra, Arithmetic, and Geometry In Honor of Yu. I. Manin},
Progr. Math., 269--270, Birkh\"auser, 2009.

\bibitem[DM96]{Dixon-Mortimer}
J.~D. Dixon and B. Mortimer.
\newblock {\em {Permutation groups.}}
\newblock {Graduate Texts in Mathematics. 163. New York, NY: Springer-Verlag.
 xii, 346 p.}, 1996.

\bibitem[GH78]{Griffiths1978}
P. Griffiths and J. Harris.
\newblock {\em {Principles of algebraic geometry.}}
\newblock {Pure and Applied Mathematics. A Wiley-Interscience Publication. New
 York etc.: John Wiley \&amp; Sons. XII, 813 p.}, 1978.

\bibitem[Gus83]{Gushelcprime1983}
N.~P. Gushel{\cprime}.
\newblock Fano varieties of genus {$8$}.
\newblock {\em Uspekhi Mat. Nauk}, 38(1(229)):163--164, 1983.

\bibitem[IM04]{Iliev2004}
A. Iliev and D. Markushevich.
\newblock Elliptic curves and rank-2 vector bundles on the prime {F}ano
 threefold of genus 7.
\newblock {\em Adv. Geom.}, 4(3):287--318, 2004.

\bibitem[Isk77]{Iskovskih1977a}
V.~A. Iskovskih.
\newblock Fano threefolds. {I}.
\newblock {\em Izv. Akad. Nauk SSSR Ser. Mat.}, 41(3):516--562, 717, 1977.

\bibitem[Isk80]{Iskovskikh-1980-Anticanonical}
V.~A. Iskovskikh.
\newblock {Anticanonical models of three-dimensional algebraic varieties.}
\newblock {\em J. Sov. Math.}, 13:745--814, 1980.

\bibitem[Isk90]{0691.14027}
V.~A. Iskovskikh.
\newblock {A double projection from a line on {F}ano threefolds of the first
 kind.}
\newblock {\em Math. USSR, Sb.}, 66:265--284, 1990.

\bibitem[IM71]{Iskovskikh1971}
V.~A. Iskovskikh and Yu.~I. Manin.
\newblock {Three-dimensional quartics and counterexamples to the L\"uroth
  problem.}
\newblock {\em Math. USSR-Sb.}, 15:141--166, 1971.

\bibitem[IP99]{Iskovskikh-Prokhorov-1999}
V.~A. Iskovskikh and Yu. Prokhorov.
\newblock {\em Fano varieties. {A}lgebraic geometry. {V}.}, {\em
 Encyclopaedia Math. Sci.} ~47.
\newblock Springer, Berlin, 1999.

\bibitem[IP96]{Iskovskikh-Pukhlikov-1996}
V.~A. Iskovskikh and A.~V. Pukhlikov.
\newblock Birational automorphisms of multidimensional algebraic manifolds.
\newblock {\em J. Math. Sci.}, 82(4):3528--3613, 1996.

\bibitem[JR06]{jahnke-Radloff-2006arx}
P. Jahnke and I. Radloff.
\newblock Terminal {F}ano threefolds and their smoothings.
\newblock preprint arXiv{:} 0601769, 2006.

\bibitem[Kal11]{Kaloghiros}
A.-S. Kaloghiros.
\newblock The defect of {F}ano 3-folds.
\newblock {\em J. Algebraic Geom.}, 20(1):127--149, 2011,
erratum available at \url{http://www.dpmms.cam.ac.uk/~asack2}.

\bibitem[Kaw92]{Kawamata-1992bF}
Y. Kawamata.
\newblock Boundedness of {$\mathbf{Q}$}-{F}ano threefolds.
\newblock In {\em Proceedings of the International Conference on Algebra, Part
 3 (Novosibirsk, 1989)}, {\em Contemp. Math.} \textbf{131}, 439--445,
 Providence, RI, 1992. Amer. Math. Soc.

\bibitem[Kaw97]{Kawamata-1997-Adj}
Yujiro Kawamata.
\newblock Subadjunction of log canonical divisors for a subvariety of
  codimension {$2$}.
\newblock In {\em Birational algebraic geometry (Baltimore, MD, 1996)},  
{\em Contemp. Math.}   \textbf{207}, 79--88. Amer. Math. Soc., Providence, RI,
  1997.

\bibitem[Kaw07]{Kawakita2007}
M. Kawakita.
\newblock Inversion of adjunction on log canonicity.
\newblock {\em Invent. Math.}, 167:129--133, 2007.

\bibitem[KMMT00]{KMMT-2000}
J. Koll{\'a}r, Y. Miyaoka, S. Mori, and H. Takagi.
\newblock Boundedness of canonical {$\mathbf{Q}$}-{F}ano 3-folds.
\newblock {\em Proc. Japan Acad. Ser. A Math. Sci.}, 76(5):73--77, 2000.

\bibitem[KS04]{Kollar2004b}
J. Koll{\'a}r and F.-O. Schreyer.
\newblock Real {F}ano 3-folds of type {$V\sb {22}$}.
\newblock In {\em The Fano Conference}, 515--531. Univ. Torino, Turin,
 2004.

\bibitem[Kol92]{Utah}
J.~Koll{\'a}r, editor.
\newblock {\em Flips and abundance for algebraic threefolds}.
\newblock Soci\'et\'e Math\'ematique de France, Paris, 1992.
\newblock Papers from the Second Summer Seminar on Algebraic Geometry held at
  the University of Utah, Salt Lake City, Utah, August 1991, Ast\'erisque No.
  211 (1992).

\bibitem[Kuz04]{Kuznetsov2004-e}
A.~G. Kuznetsov.
\newblock {Derived categories of cubic and $V_{14}$ threefolds.}
\newblock {\em Proceedings of the Steklov Institute of Mathematics},
 246:171--194, 2004.


\bibitem[Lin71]{Lindsey-1971-lg6}
J.~H. Lindsey, II.
\newblock Finite linear groups of degree six.
\newblock {\em Canad. J. Math.}, 23:771--790, 1971.

\bibitem[Mac65]{Macbeath1965}
A.~M. Macbeath.
\newblock On a curve of genus {$7$}.
\newblock {\em Proc. London Math. Soc. (3)}, 15:527--542, 1965.

\bibitem[MM82]{Mori1981-82}
S. Mori and S. Mukai.
\newblock Classification of {F}ano {$3$}-folds with {$B\sb{2}\geq 2$}.
\newblock {\em Manuscripta Math.}, 36(2):147--162, 1981/82.
\newblock Erratum: {M}anuscripta {M}ath. 110 (2003), 407.

\bibitem[Mor88]{Mori-1988}
S. Mori.
\newblock Flip theorem and the existence of minimal models for {$3$}-folds.
\newblock {\em J. Amer. Math. Soc.}, 1(1):117--253, 1988.

\bibitem[MP09]{Mori-Prokhorov-2008d}
S.~Mori and Yu. Prokhorov.
\newblock Multiple fibers of del {P}ezzo fibrations.
\newblock {\em Proc. Steklov Inst. Math.}, 264(1):131--145, 2009.

\bibitem[Muk88a]{Mukai-1988}
S. Mukai.
\newblock Curves, {$K3$} surfaces and {F}ano {$3$}-folds of genus {$\leq 10$}.
\newblock In {\em Algebraic geometry and commutative algebra, Vol.\ I}, 
 357--377. Kinokuniya, Tokyo, 1988.

\bibitem[Muk88b]{Mukai1988}
S. Mukai.
\newblock {Finite groups of automorphisms of {K}3 surfaces and the {M}athieu
 group.}
\newblock {\em Invent. Math.}, 94(1):183--221, 1988.

\bibitem[Muk92]{Mukai1992b}
S. Mukai.
\newblock {Curves and symmetric spaces.}
\newblock {\em Proc. Japan Acad., Ser. A}, 68(1):7--10, 1992.

\bibitem[Muk95]{mukai-1995-1}
S. Mukai.
\newblock {Curves and symmetric spaces. I.}
\newblock {\em Am. J. Math.}, 117(6):1627--1644, 1995.


\bibitem[Nam97]{Namikawa-1997}
Y. Namikawa.
\newblock Smoothing {F}ano {$3$}-folds.
\newblock {\em J. Algebraic Geom.}, 6(2):307--324, 1997.

\bibitem[PCS05]{Przhiyalkovskij-Chel'tsov-Shramov-2005en}
V.~V. Przhiyalkovskij, I.~A. Chel'tsov, and K.~A. Shramov.
\newblock {Hyperelliptic and trigonal Fano threefolds.}
\newblock {\em Izv. Math.}, 69(2):365--421, 2005.

\bibitem[Put82]{Puts1982}
P.~J. Puts.
\newblock On some {F}ano-threefolds that are sections of {G}rassmannians.
\newblock {\em Nederl. Akad. Wetensch. Indag. Math.}, 85(1):77--90, 1982.

\bibitem[Rei87]{Reid-YPG1987}
M. Reid.
\newblock Young person's guide to canonical singularities.
\newblock In {\em Algebraic geometry, Bowdoin, 1985 (Brunswick, Maine, 1985)},
{\em Proc. Sympos. Pure Math.}, ~\textbf{46}, 345--414. Amer. Math.
 Soc., Providence, RI, 1987.

\bibitem[Ser09]{Serre2009}
J.-P. Serre.
\newblock A {M}inkowski-style bound for the orders of the finite subgroups of
  the {C}remona group of rank 2 over an arbitrary field.
\newblock {\em Mosc. Math. J.}, 9(1):193--208,  2009.

\bibitem[Shi89]{Shin1989}  
K.-H. Shin.
\newblock {$3$}-dimensional {F}ano varieties with canonical singularities.
\newblock {\em Tokyo J. Math.}, 12(2):375--385, 1989.

\bibitem[Sho93]{Shokurov-1992-e-ba}
V.~V. Shokurov.
\newblock 3-fold log flips.
\newblock {\em Russ. Acad. Sci., Izv., Math.}, 40(1):95--202, 1993.

\bibitem[ST54]{Shephard-Todd}
G.~C. Shephard and J.~A. Todd.
\newblock Finite unitary reflection groups.
\newblock {\em Canadian J. Math.}, 6:274--304, 1954.

\bibitem[Zha06]{Zhang-Qi-2006}
Q.~Zhang.
\newblock Rational connectedness of log {$\mathbf{Q}$}-{F}ano varieties.
\newblock {\em J. Reine Angew. Math.}, 590:131--142, 2006.

\end{thebibliography}

\newcommand{\etalchar}[1]{$^{#1}$}
\def\cprime{$'$}
 \def\mathbb#1{\mathbf#1}\def\polhk#1{\setbox0=\hbox{#1}{\ooalign{\hidewidth
 \lower1.5ex\hbox{`}\hidewidth\crcr\unhbox0}}}

\end{document}